\tikzset{ shorten <>/.style={ shorten >=#1, shorten <=#1 } }
\DeclareSymbolFontAlphabet{\mathbb}{AMSb}
\DeclareSymbolFontAlphabet{\mathbbl}{bbold}
\def\R{\mathbb R}
\def\Po{\mathbb P}
\def\Q{\mathbb Q}
\def\D{\mathcal{D}}
\def\<{\langle}
\def\>{\rangle}
\def\Chi{\raise .3ex \hbox{\large $\chi$}} 
\def\ov{\overline}
\def\TT{\mathbb{T}}
\def\norm{\mathbf{n}}
\def\tang{{\boldsymbol{\tau}}}
\newcommand{\HGbracket}[2]{\langle #1,#2\rangle_{\Gamma}}
\def\[{\Bigl [}
\def\]{\Bigr ]}
\def\({\Bigl (}
\def\){\Bigr )}
\def\dsp{\displaystyle}
\def\x{\mathbf{x}}
\def\n{\mathbf{n}}
\def\U{\mathbf{U}}
\def\p{\mathbf{p}}
\def\q{\mathbf{q}}
\def\T{{\mathbf{T}}}
\def\l{{\boldsymbol{\lambda}}}
\def\m{{\boldsymbol{\mu}}}
\def\cells{\mathcal{M}}
\def\faces{\mathcal{F}}
\def\nodes{\mathcal{V}}
\def\edges{\mathcal{E}}
\def\x{{\bf x}}
\def\dsp{{\displaystyle x}}
\def\d{{\rm d}}
\def\div{{\rm div}}
\def\K{\mathbb{K}}
\def\n{\mathbf{n}}
\def\dsp{\displaystyle}
\def\bu{\mathbf{u}}
\def\bv{\mathbf{v}}
\def\bw{\mathbf{w}}
\def\bV{\mathbf{V}}
\def\gamnpm{\gamma_{\norm}^{\pm}}
\def\gamnp{\gamma_{\norm}^{+}}
\def\gamnm{\gamma_{\norm}^{-}}
\newcommand{\UD}{\mathbf{U}_\D}
\newcommand{\UDz}{\mathbf{U}^0_\D}
\newcommand{\MD}{\mathbf{M}_\D}
\newcommand{\CD}{\mathbf{C}_\D}
\newcommand{\IvD}{\mathcal I_{\mathcal V,\D}} 
\newcommand{\ID}{\mathcal I_\D} 
\newcommand{\NORM}[2]{\|#2\|_{#1}}
\def\G{\Gamma}
\def\Ks{{\mathcal{K}s}}
\def\Ksig{{K\!\sigma}}
\def\Lsig{{L\!\sigma}}
\def\sige{{\sigma e}}
\def\bbsig{\bbsigma}
\def\bbeps{\bbespilon}
\newcommand{\jump}[1]{\llbracket #1 \rrbracket}
\newcommand{\nf}{\nicefrac}
\newcommand{\email}[1]{\href{mailto:#1}{#1}}
\theoremstyle:=definition,remark,plain\do{%
        \expandafter\g@addto@macro\csname th@\theoremstyle\endcsname{%
            \addtolength\thm@preskip\parskip
            }%
        }
\newtheorem{theorem}{Theorem}
\numberwithin{theorem}{section}
\newtheorem{proposition}[theorem]{Proposition}
\newtheorem{lemma}[theorem]{Lemma}
\theoremstyle{remark}
\newtheorem{remark}[theorem]{Remark}
\theoremstyle{definition}
\def\grad{\nabla}
\def\div{{\rm div}}
\definecolor{labelkey}{rgb}{0.6,0,1}
 \newcounter{corr}
 \definecolor{violet}{rgb}{0.580,0.,0.827}
 \newcommand{\corr}[3]{\typeout{Warning : a correction remains in page
 \thepage}
 				\stepcounter{corr}        
 				{\color{blue}\ifmmode\text{\,\sout{\ensuremath{#1}}\,}\else\sout{#1}\fi}
         {\color{red}#2}
         {\color{violet} \ifmmode\text{#3}\else #3\fi }}
\def\thm@space@setup{%
  \thm@preskip=\parskip \thm@postskip=0pt
}
\begin{document}
\title{A bubble VEM-fully discrete polytopal scheme for mixed-dimensional poromechanics with frictional contact at matrix--fracture interfaces

}

\author[2]{{J\'er\^ome Droniou}\footnote{\email{jerome.droniou@umontpellier.fr}}}
\author[3]{{Guillaume Ench\'ery}\footnote{\email{guillaume.enchery@ifpen.fr}}}
\author[3]{{Isabelle Faille}\footnote{\email{isabelle.faille@ifpen.fr}}}
\author[1]{{Ali Haidar}\footnote{Corresponding author, \email{ali.haidar@univ-cotedazur.fr}}}
\author[1]{{Roland Masson}\footnote{\email{roland.masson@univ-cotedazur.fr}}}
\affil[1]{Universit\'e C\^ote d'Azur, Inria, Laboratoire J.A. Dieudonn\'e, team Coffee, Nice, France}%
\affil[2]{IMAG, Univ. Montpellier, CNRS, Montpellier, France. School of Mathematics, Monash University, Australia}%
\affil[3]{IFP Energies nouvelles, department of mathematics, 92500 Rueil-Malmaison, France}%
\date{}
\maketitle
\vspace{-.5cm}
{\footnotesize{
\centering
}}

\begin{abstract}
  This article addresses the discretisation of fractured/faulted poromechanical models using 3D polyhedral meshes in order to cope with the geometrical complexity of faulted geological models.
  A new polytopal scheme is proposed for contact-mechanics, based on a mixed formulation combining a fully discrete space and suitable reconstruction operators for the displacement field together with a face-wise constant approximation of the Lagrange multiplier, accounting for the surface tractions along the fracture/fault network. To ensure the inf--sup stability of the mixed formulation, a bubble-like degree of freedom is included in the discrete space of displacements and used in the reconstruction operators. This fully discrete scheme for the displacement is equivalent to a low-order Virtual Element scheme, with a bubble enrichment of the virtual space. This $\Po^1$-bubble VEM--$\Po^0$ mixed discretisation is combined with an Hybrid Finite Volume scheme for the Darcy flow.
The proposed approach is adapted to complex geometry with a network of planar faults/fractures which can include corners, tips and intersections; it leads to efficient semi-smooth Newton solvers for the contact-mechanics, and preserves the dissipative properties of the fully coupled model. The scheme is numericalluy investigated in terms of convergence and robustness on several 2D and 3D test cases, using either analytical or numerical reference solutions and considering both the stand alone static contact-mechanics model and the fully coupled poromechanical model.    
\bigskip \\
\textbf{Keywords:} contact-mechanics, poromechanics, mixed-dimensional model, virtual element method, mixed formulation, bubble stabilisation, polytopal method, hybrid finite volume.
\end{abstract}

\section{Introduction}

Hydro-mechanical models in faulted/fractured porous media play an important role in many applications in geosciences, such as geothermal systems or geological storages. This is in particular the case for CO$_2$ sequestration, in which the pressure build-up due to CO$_2$ injection can potentially lead to fault reactivation with risks of induced seismicity or loss of storage integrity, issues which must be carefully investigated. Numerical modelling is an essential tool to better assess and control these type of risks. It involves the simulation of processes coupling the flow along the faults and in the surrounding porous rock, the rock mechanical deformation, and the mechanical behaviour of the faults related to contact-mechanics.

The objective of this work is to design a robust numerical method which must both preserve the mathematical structure of the coupled system of PDEs, in particular its dissipative properties, and cope with the geometrical complexity of faulted geological models. 
Generating a mesh which represents the complex geometry of heterogeneous geological formations -- including stratigraphic layering, erosions and faults -- is a difficult task and a current topic of intensive research \cite{WELLMANN20181}. Today's geomodels are mostly based on Corner Point Geometries (CPG), leading to hexahedral meshes with edge degeneracy accounting for erosions and non-matching interfaces at faults. CPG can be represented as conformal polyhedral meshes by typically cutting each non planar quadrangular faces into two triangles, and by co-refinement of the fault surfaces \cite{Farmer2005}. However, standard numerical methods for mechanical models are based on Finite Element Methods (FEM) that cannot cope with polyhedral meshes. Alternatively, poromechanical models can be discretised on two different meshes, typically using a FEM mesh for the mechanics and a CPG mesh combined with a Finite Volume scheme for the flow \cite{settari94}. This type of approach induces additional interpolation errors and computational costs, which makes the design of numerical methods applicable to single polyhedral meshes desirable. 
In this direction, one can benefit from the active research field on polytopal discretisations such as Discontinuous Galerkin \cite{hansbo-larson}, Hybrid High Order \cite{hho-book}, MultiPoint Flux and Stress Approximations \cite{SBN12,keilegavlen-nordbotten}, Hybrid Mimetic Methods \cite{dipietro-lemaire}, and Virtual Element Methods (VEM) \cite{beirao-brezzi-marini,da2015virtual}. Among those, VEM, as a natural extension of FEM to polyhedral meshes, has received a notable attention from the numerical mechanics community, and has been applied to various problems including in geomechanics \cite{AHR17}, poromechanics \cite{coulet2020fully,BORIO2021,CRMECA_2023__351_S1_A28_0}, contact-mechanics \cite{Wriggers2016,Wriggerscontact2024}, and fracture mechanics \cite{Wriggers2024}. 

The main objective of this work is to extend the first order VEM to contact-mechanics in the framework of poromechanical models in fractured/faulted porous media. Compared with previous works based on nodal Lagrange multipliers and focusing on single interfaces, such as \cite{Wriggers2016,Wriggerscontact2024}, our purpose is to develop a formulation more adapted to fracture networks including intersections, tips and corners.
The faults/fractures are represented by a network of planar surfaces connected to the surrounding matrix domain, leading to the so-called mixed-dimensional models which have been the object of many recent works \cite{NEJATI2016123,tchelepi-castelletto-2020,GKT16,GH19,contact-norvegiens,thm-bergen,GDM-poromeca-cont,GDM-poromeca-disc,BDMP:21,BoonNordbotten22}. 
Different formulations of the contact-mechanics have been developed to take into account Coulomb frictional contact at matrix--fracture interfaces; these include mixed or stabilized mixed formulations \cite{haslinger-96,Wohlmuth11,Lleras-2009,Wriggers2016}, augmented Lagrangian \cite{BHL2023}, and Nitsche methods \cite{Chouly2017,CHLR2020,beaude2023mixed}.
Following \cite{Hild17,tchelepi-castelletto-2020,BDMP:21}, our choice is based on the  mixed formulation combined with face-wise constant Lagrange multipliers representing surface tractions along the fracture network. 
It allows us to deal with fracture networks including corners, tips and intersections, to use efficient semi-smooth Newton nonlinear solvers, and to preserve at the discrete level the dissipative properties of the contact terms.
The combination of a first order VEM discretisation of the displacement field together with a face-wise constant approximation of the Lagrange multiplier requires a bespoke stabilisation to satisfy the inf-sup compatibility condition. This is achieved by extending to the VEM polyhedral framework the principles of $\Po^1$-bubble FEM discretisations \cite{Renard03}. It relies here on the enrichment of the discrete displacement space by addition of a bubble unknown on one side of each fracture face.
The discretisation of the contact-mechanics is first derived in a fully discrete framework, based on vector spaces of discrete unknowns and reconstruction operators in the spirit of Hybrid High Order or Discrete De Rham methods \cite{hho-book,ddr-variant}. This discretisation is then shown to be equivalent to a VEM formulation. For the coupled poromechanical model, the Darcy flow is discretised using a Finite Volume scheme which, thanks to its flux conservativity properties, could easily be adapted to more advanced models such as multiphase Darcy flows. To fix ideas, the Hybrid Finite Volume scheme \cite{Eymard2009} is used; it belongs to the family of Hybrid Mimetic Mixed methods \cite{Droniou2010} and was adapted to mixed-dimensional models in \cite{BHMS2016}.

This paper is organised as follows. The coupled poromechanical model with Coulomb frictional contact a matrix--fracture interface is first described in Section \ref{sec:model}, based on the mixed-dimensional setting.
Then, its fully discrete approximation is introduced in Section \ref{sec:discretisation}, starting with the mixed-dimensional Darcy flow in Section \ref{subsec:HFV} followed by the contact-mechanical model in Section \ref{sec:discretecontactmechanics}. The equivalent VEM formulation of the contact-mechanical discretisation is derived in Section \ref{equiv_VEM}; the detailed proofs of the connections between both formulations are reported to Appendix \ref{appendice_VEM}.
The numerical Section \ref{num.experiments} investigates the convergence and robustness of the discretisation based on both 2D and 3D analytical or numerical reference solutions. Section \ref{test_meca_standalone} first considers stand-alone static contact-mechanics test cases. Section \ref{test_poromeca} extends the numerical assessment of the proposed scheme to the fully coupled poromechanical model.

\section{Mixed-dimensional poromechanical model}\label{sec:model}

We first present the geometrical setting, and then the poromechanical model in strong form, starting with the mixed-dimensional Darcy flow
followed by the contact-mechanical model and the coupling laws. 

\subsection{Mixed-dimensional geometry and function spaces}\label{sec:notation}

In what follows, scalar fields are represented by lightface letters and vector fields by boldface letters.
We let $\Omega\subset\R^d$, $d\in\{2,3\}$, denote a bounded polytopal domain, partitioned
into a fracture domain $\Gamma$ and a matrix domain $\Omega\backslash\ov\G$.
The network of fractures is defined by 
$$
\ov \Gamma = \bigcup_{i\in I} \ov \Gamma_i,
$$  
where each fracture $\Gamma_i\subset \Omega$, $i\in I$, is a planar polygonal simply connected open domain (Figure \ref{mixed_dime_geom}).
The two sides of a given fracture of $\Gamma$ are denoted by $\pm$ in the matrix domain, with unit normal vectors $\n^\pm$ oriented outward from the sides $\pm$. We denote by $\gamma^\pm$ the trace operators on the side $\pm$ of $\Gamma$ for functions in $H^1(\Omega{\setminus}\ov\Gamma)^r$ with $r=1$ or $d$ (in the latter case, the trace is taken component-wise). The jump operator on $\Gamma$ is defined as
$$
\jump{\bu} = \gamma^+ \bu - \gamma^- \bu\qquad\forall \bu\in H^1(\Omega{\setminus}\ov\Gamma)^d,
$$
and we denote by
$$
\jump{\bu}_\n = \jump{\bu}\cdot\n^+ \quad \mbox{ and } \quad \jump{\bu}_\tang = \jump{\bu} -\jump{\bu}_\n \n^+
$$
its normal and tangential components. The notation $\jump{\bV_m}_\n = \gamnp \bV_{m} +  \gamnm \bV_{m}$ will also be used to denote the normal jump of  functions $\bV_m \in H_{\div}(\Omega\setminus\ov\Gamma)$, where $\gamma^\pm_\n$ is the normal trace operator on the side $\pm$ of $\Gamma$.
The tangential gradient and divergence along the fractures are respectively denoted by $\nabla_\tang$ and $\div_\tang$. The symmetric gradient operator $\bbeps$ is defined such that $\bbeps(\bv) = {1\over 2} (\nabla \bv + \prescript{t}{} (\nabla \bv) )$ for a given vector field $\bv \in H^1(\Omega\backslash\ov\Gamma)^d$.

Let $d^c_f: \Gamma \to (0,+\infty)$ be the fracture aperture in contact state (see Figure \ref{fig:aperture}). The function $d^c_f$ is assumed to be continuous with zero limits at
$\partial \Gamma \setminus (\partial\Gamma\cap \partial\Omega)$ (i.e.,~the tips of $\Gamma$) and strictly positive limits at $\partial\Gamma\cap \partial\Omega$.

The primary unknowns of the poromechanical model are the matrix pressure $p_m$ in the matrix domain, the  fracture pressure $p_f$ along the fracture network, and the displacement vector field $\bu$ in the matrix domain (see Figure \ref{mixed_dime_geom}), for which we introduce the following function spaces. We denote by  $H_{c}^1(\Gamma)$ the space of functions $q_f \in L^2(\Gamma)$ such that $(d^c_f)^{\nf 3 2} \nabla_\tang q_f$ belongs to  $L^2(\Gamma)^{d-1}$,
and whose traces are continuous at fracture intersections. The weight $(d^c_f)^{\nf 3 2}$ in the definition of $H_{c}^1(\Gamma)$ accounts for the fact that the fracture aperture $d_f  \geq d^c_f$ can vanish at the tips and that only the $L^2(\Gamma)^{d-1}$ norm of $d_f^{\nf 3 2}  \nabla_\tang p_f$ will be controlled. The space for the displacement is
\begin{equation*}
\U = H^1(\Omega\backslash\ov\Gamma)^d, 
\end{equation*}
and we denote by 
$$
\U_0=\{\bv\in\U\,:\,\bv_{|\partial\Omega}=0\}
$$ 
its subspace of vanishing displacement at the boundary $\partial \Omega$. Assuming that $\Omega \backslash \ov\Gamma$ is connected, the semi-norm $\|\bv \|_{\U} = \|\nabla \bv \|_{L^2(\Omega)^d}$ defines a norm on $\U_0$.
The space for the pair of matrix/fracture pressures is
\begin{equation*}
V = V_m \times V_f\quad \mbox{ with }\quad
V_m = H^1(\Omega{\setminus}\ov\Gamma) \  \mbox{ and } \
V_f = H_{c}^1(\Gamma).
\end{equation*}
For $q =(q_m, q_f) \in V$, let us denote the jump operator on the side $\pm$ of the fracture by
$$
\jump{q}^\pm = \gamma^\pm q_m - q_f.
$$

\begin{figure}
	\centering
	\resizebox{0.28\textwidth}{!}{\input{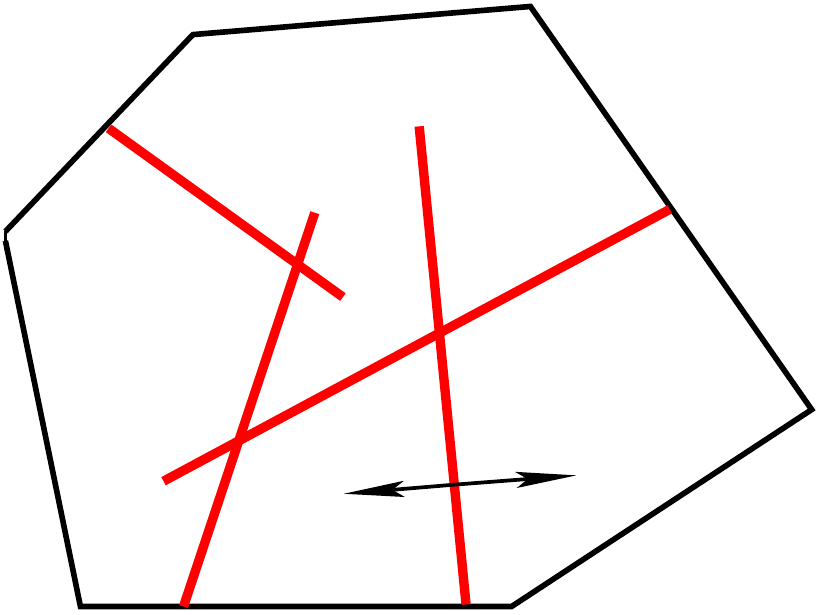_t}} 
	\caption{Mixed-dimensional geometry with the fracture network $\Gamma$ and the matrix domain $\Omega\setminus\ov\Gamma$. The poromechanical unknowns are defined by the matrix pressure $p_m$, the fracture pressure $p_f$ and the displacement vector field $\bu$ in the matrix domain.}
	\label{mixed_dime_geom}
\end{figure}

\begin{figure}
	\begin{center}
		\includegraphics[scale=.65]{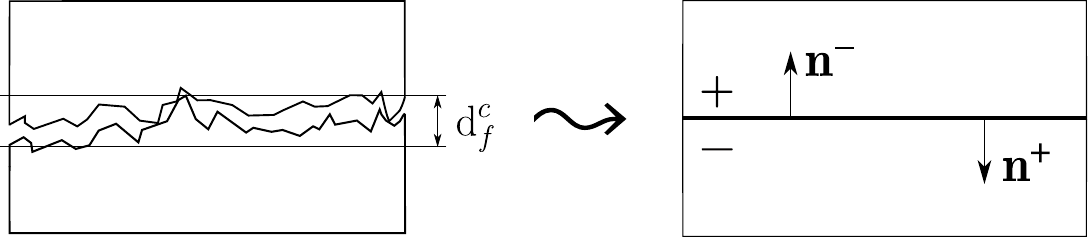}
		\caption{Conceptual fracture model with contact at asperities. $\d_{f}^{c}$ is the fracture aperture at contact state.}
		\label{fig:aperture}
	\end{center}  
\end{figure}

\subsection{Mixed-dimensional Darcy flow}

The flow model is a mixed-dimensional model assuming an incompressible fluid. It based on volume conservation equations, on the Darcy law for the velocity field $\bV_m$ in the matrix, and on the Poiseuille law for the velocity field $\bV_f$ along the fractures. Additionally, the model incorporates transmission conditions to account for the interaction and exchange of fluid between the matrix and fractures. Denoting by $(0,T)$ the time interval, we obtain the following flow equations 
\begin{equation}
	\label{eq_edp_darcy} 
	\left\{\!\!\!\!
	\begin{array}{lll}
		&\partial_t \phi_{m}+\operatorname{div} \bV_{m}=h_{m} & \mbox{ on } (0,T)\times \Omega{\setminus}\ov\Gamma,\\[1ex]
		& \bV_{m}=-\frac{ \mathbb{K}_{m}}{\eta} \nabla p_{m} & \mbox{ on } (0,T)\times \Omega{\setminus}\ov\Gamma,\\[1ex]
		&\partial_t \d_{f}+\operatorname{div}_{\tang} \bV_{f}-\jump{\bV_{m} }_{n}=h_{f}  & \mbox{ on } (0,T)\times \Gamma,\\[1ex]
		& \bV_{f}=\frac{C_{f}}{\eta} \nabla_{\tang} p_{f}, & \mbox{ on } (0,T)\times \Gamma,\\[1ex]
		& \gamnpm \bV_{m} =\Lambda_{f} \jump{ p }^{\pm} & \mbox{ on } (0,T)\times \Gamma. 
	\end{array}
	\right.
\end{equation}
In \eqref{eq_edp_darcy}, $\eta$ is the constant fluid dynamic viscosity, $\phi_m$ is the matrix porosity, and $\mathbb{K}_m$ is the matrix permeability tensor. The right hand sides $h_m$ and $h_f$ account for injection or production source terms. The fracture aperture, denoted by $\d_f$, yields the fracture conductivity $C_f$, typically given by the Poiseuille law $C_{f}=\d_{f}^{3}/12$, and the fracture normal transmissivity $\Lambda_{f}=2 K_{f,\n}/\d_f$, where $K_{f,\n}$ is the fracture normal permeability. 
To fix ideas, homogeneous Dirichlet boundary conditions are imposed for $p_m$ on $\partial\Omega$ and for $p_f$ on $\partial \Gamma \cap \partial \Omega$, while homogeneous Neumann boundary conditions are imposed for $p_f$ at the fracture tips on $\partial \Gamma \backslash \partial \Omega$.

\subsection{Quasi static contact-mechanical model and coupling laws}

The quasi static contact-mechanical model accounts for the poromechanical equilibrium equation, with a Biot isotropic linear elastic constitutive law and a Coulomb frictional contact model at matrix--fracture interfaces:
\begin{equation}
	\label{eq_edp_contact_meca} 
	\left\{\!\!\!\!
	\begin{array}{lll}
		& -\operatorname{div} \bbsigma^{\top}\left(\bu, p_{m}\right)= \mathbf{f}  & \mbox{ on } (0,T)\times \Omega{\setminus}\ov\Gamma,\\[1ex]
		& \T^{+}+\T^{-}=0& \mbox{ on } (0,T)\times \Gamma,\\[1ex]
		& T_{\n} \leqslant 0,~ \jump{\bu }_{\n} \leqslant 0,~ \jump{\bu }_{\n}T_{\n} =0, & \mbox{ on } (0,T)\times \Gamma,\\[1ex]
		& \left|\T_{\tang}\right| \leqslant-F ~T_{\n}& \mbox{ on } (0,T)\times \Gamma,\\[1ex]
		& \T_{\tang}  \cdot \partial_t \jump{\bu }_{\tang}-F ~T_{\n}\left| \partial_t \jump{\bu }_{\tang}\right|=0 & \mbox{ on } (0,T)\times \Gamma.
	\end{array}
	\right.
\end{equation}
The total stress tensor $\bbsigma^{\top}$ is defined in terms of the effective stress tensor $\bbsigma$  and the matrix pressure $p_m$ as follows
\begin{equation}
	\label{eq_edp_stress} 
	\left\{\!\!\!\!
	\begin{array}{lll}
	& \bbsigma^{\top}\left(\bu, p_{m}\right)=\bbsigma(\bu)-b p_{m} ~\mathbb{I}  & \mbox{ on } (0,T)\times \Omega{\setminus}\ov\Gamma,\\[1ex]
	& \bbsigma(\bu)=2 \mu \bbeps(\bu)+\lambda \operatorname{div} \bu ~\mathbb{I}  & \mbox{ on } (0,T)\times \Omega{\setminus}\ov\Gamma. 
	\end{array}
	\right.
\end{equation}
In \eqref{eq_edp_contact_meca}--\eqref{eq_edp_stress}, $b$ is the Biot coefficient, $\mu$ and $\lambda$ are the Lam\'e parameters, $F$ is the friction coefficient, and the surface tractions are defined by
\begin{equation}
	\label{eq_def_T} 
	\left\{\!\!\!\!
	\begin{array}{lll}
		& \T^{ \pm}=  \gamnpm  {\bbsigma}^{\top}\left(\bu, p_{m}\right)      + p_{f} \n^{ \pm}, & \mbox{ on } (0,T)\times \Gamma, \\[1ex]
		& T_{\n} = {\bf T}^{+}\cdot \n^+, & \mbox{ on } (0,T)\times \Gamma, \\[1ex]
		& {\bf T}_{\tang} = {\bf T}^{+} - ({\bf T}^{+}\cdot \n^+)\n^+ & \mbox{ on } (0,T)\times \Gamma,
	\end{array}
	\right.
\end{equation}
where the trace $\gamnpm  {\bbsigma}^{\top}$ is taken row-wise. 
Homogeneous Dirichlet boundary conditions are imposed on $\partial \Omega$ for the displacement field $\bu$. The model is closed by the following coupling laws 
\begin{equation}\label{coupling_law}
	\left\{\!\!\!\!
	\begin{array}{lll}
	& \partial_t \phi_{m}=b \operatorname{div} \left(\partial_t \bu \right) + \frac{1}{M} \partial_{t} p_{m}  & \mbox{ on } (0,T)\times \Omega{\setminus}\ov\Gamma,\\[1ex]
	& \d_f = \d_{f}^{c} - \jump{\bu }_{\n} & \mbox{ on } (0,T)\times \Gamma.
	\end{array}
\right.
\end{equation}
The first equation accounts for the linear isotropic poroelastic constitutive law for the porosity $\phi_m$, with $M$ denoting the Biot modulus. The second equation specifies the fracture aperture $\d_f$, assuming to fix ideas that the contact aperture $\d_{f}^c$ is reached for $\jump{\bu }_{\n} = 0$, see Figure \ref{fig:aperture}. 

Following~\cite{Wohlmuth11}, the poromechanical model  with Coulomb frictional contact is formulated in mixed form using a vector Lagrange multiplier ${\boldsymbol{\lambda}}: \Gamma \to \R^d$ at matrix--fracture interfaces.
Denoting for $r\in\{1,d\}$ the duality pairing of $H^{-1/2}(\Gamma)^r$ and $H^{1/2}(\Gamma)^r$ by $\HGbracket{\cdot}{\cdot}$, we define the dual cone 
\begin{align*}
\bm{C}_f(\lambda_\n) = \Big\{\m{} \in  H^{-1/2}(\Gamma)^d \,:\,
 \HGbracket{\m}{\bv} \leq \HGbracket{F \lambda_\n}{|\bv_{\tang}|} \mbox{ for all } \bv \in (H^{1/2}(\Gamma))^d \mbox{ with } v_\n \leq 0\Big\}. 
\end{align*}

The Lagrange multiplier formulation of \eqref{eq_edp_contact_meca}--\eqref{eq_edp_stress}--\eqref{eq_def_T} then formally reads, dropping any consideration of regularity in time: find $\bu:[0,T]\to \U_0$ and $\l :[0,T]\to \bm{C}_f(\lambda_\n)$ such that for all $\bv:[0,T]\to \U_0$ and $\m :[0,T]\to \bm{C}_f(\lambda_\n)$, 
\begin{equation}
  \label{Lagrange_meca_contactfriction}
\begin{aligned}
 & \dsp \int_\Omega \( \bbsig( \bu): \bbeps( \bv) - b ~ p_m \div( \bv)\) 
+  \HGbracket{\l}{\jump{ \bv}} 
+ \int_\G  p_f ~\jump{ \bv}_\n 
\dsp =  \int_\Omega \mathbf{f}\cdot \bv,\\[.8em]
& \dsp \HGbracket{{\mu}_\n-{\lambda}_\n}{\jump{ \bu}_\n} + \HGbracket{\m_{\tang} - \l_{\tang}}{ \jump{\partial_t \bu}_{\tang}}\leq 0.   
\end{aligned}
\end{equation}
It can be checked that this variation formulation links the Lagrange multiplier and the traction by $\l = -{\bf T}^+ = {\bf T}^-$.

\section{Discretisation}\label{sec:discretisation}

We consider here the discretisation of the coupled model \eqref{eq_edp_darcy}--\eqref{coupling_law} on conforming polyhedral meshes defined in Section \ref{subsec:mesh}.  To make the presentation more concrete, the discretisation of the mixed-dimensional Darcy flow is based on the Hybrid Finite Volume (HFV) scheme introduced in \cite{BHMS2016} and briefly recalled in Section \ref{subsec:HFV}, but other finite volume methods could be considered as well. Sections \ref{sec:discretecontactmechanics} and \ref{equiv_VEM} deal with the core of this work which is the discretisation of the contact-mechanical model based on a mixed $\Po^1$-bubble VEM--$\Po^0$ formulation. It is first introduced in Section \ref{sec:discretecontactmechanics}  using a discrete setting, and an equivalent Virtual Element formulation is described in Section \ref{equiv_VEM}. The discrete coupling conditions are presented in Section \ref{subsec:discrete_coupl_cond}.

\subsection{Space and time discretisation}\label{subsec:mesh}

We consider a polyhedral mesh of the domain $\Omega$, conforming to the fracture network $\Gamma$. For each cell $K$, we denote by $h_K$ its diameter and by $|K|$ its measure; we also denote by $|\sigma|$ the $(d-1)$-dimensional measure of a face $\sigma$. The set of cells $K$, the set of faces $\sigma$,  the set of edges $e$ and the set of nodes $s$ are denoted respectively by  $\cells$, $\faces$, $\edges$ and $\nodes$. 
For any subset $A$ of $\R^d$ and $\mathcal X\in\{\cells,\faces,\edges,\nodes\}$, we denote by $\mathcal X_A$ the set of elements in $\mathcal X$ that are included in $A$ or that contain $A$; hence, $\faces_K$ is the set of faces of the element $K\in\cells$, $\edges_\sigma$ is the set of edges of the face $\sigma\in\faces$, and $\cells_\sigma$ is the set of cells that contain the face $\sigma$.
We assume the existence of a subset of faces $\faces_{\Gamma} \subset \faces$ such that
$$
\ov{\Gamma} = \bigcup_{\sigma \in \faces_{\Gamma}} \ov{\sigma}.
$$
The mesh is assumed conforming in the sense that the set $\cells_{\sigma}$ of neighboring cells of $\sigma \in \faces$  is either $\cells_{\sigma} = \{ K,L\}$ for an interior face $\sigma \in \faces^{\text{int}}$ (in which case we write $\sigma=K|L$), or  $\cells_{\sigma} = \{ K\}$ for a boundary face $\sigma \in \faces^{\text{ext}}$.
It is assumed that $\faces_{\Gamma} \subset  \faces^{\text{int}}$ and, if $\sigma=K|L$, that $K$ and $L$ are ordered such that $\n_{\Ksig} = \n^+$ and $\n_{\Lsig} = \n^-$, where $\n_{\Ksig} $ (resp.~$\n_{\Lsig}$) is the unit  normal vector to $\sigma$ oriented outward of $K$ (resp.~$L$). 

We denote by $\nodes^{\text{ext}}$ and $\edges^{\text{ext}}$ the boundary nodes and edges.
For each $\sigma \in \faces$,  $\norm_{\sige}$ is the unit normal vector to $e \in \edges_{\sigma}$ in the plane spanned by $\sigma$ oriented outward to $\sigma$.
For each $K \in \cells$ and $\sigma \in \faces_K$ we denote by $\gamma^{\Ksig}$ the trace operator on $\sigma$ for functions in $H^1(K)$; similarly, for each $\sigma \in \faces$ and $e \in \edges_{\sigma}$, $\gamma^{\sige}$ is the trace operator on $e$ for functions in $H^1(\sigma)$.

For the time discretisation, we consider a partition $(t^n)_{0\leq n\leq N}$ of the time interval $[0, T]$ with $t^0=0$, $t^N= T$ and $t^{n}-t^{n-1}=\Delta t^n>0$, $n=1,...,N$. For a family $w=(w^n)_{n=0,\ldots,N}$, we let $\delta_t^n w \coloneq \frac{w^n - w^{n-1}}{\Delta t^n}$.

We assume in the following that the Lam\'e parameters $\mu$, $\lambda$ and the permeability tensor $\mathbb{K}_m$ are cell-wise constant. The fracture normal permeability $K_{f,\n}$ and friction coefficient $F$ will both be assumed face-wise constant.

\subsection{Mixed-dimensional Darcy flow discretisation}\label{subsec:HFV}

We consider the Hybrid Finite Volume (HFV) discretisation of the mixed-dimensional Darcy flow model introduced in \cite{BHMS2016}. It is based on the vector space $X_\D = X_{\D_m} \times X_{\D_f}$ of discrete pressures $p_\D = (  p_{\D_m}, p_{\D_f} )$ defined by 
\begin{align*}
X_{\D_m} &= \left\{p_{\D_m}= \((p_K)_{K\in\cells},(p_\sigma)_{\sigma \in \faces\setminus \faces_{\Gamma}},(p_\Ksig)_{\sigma\in\faces_\Gamma,\,K\in\cells_\sigma} \)\,:\,p_K \in \R\,,\; p_{\sigma} \in \R\,,\; p_{\Ksig} \in \R \right\}, \\
X_{\D_f} &= \left\{ p_{\D_f} = \( (p_\sigma)_{\sigma\in \faces_\Gamma}, (p_e)_{e\in \edges_\Gamma} \)\,:\, p_\sigma \in \R\,,\; p_e \in \R \right\}. 
\end{align*}
We denote by $X^0_{\D_m}$ (resp.~$X^0_{\D_f}$)  the subspace of $X_{\D_m}$ (resp.~$X_{\D_f}$) with vanishing values at the boundary $\faces^{\text{ext}}$ (resp.~$\edges^{\text{ext}} \cap \edges_{\Gamma}$), and we set $X_\D^0 = X^0_{\D_m}\times X^0_{\D_f}$. 
The HFV scheme is obtained by replacing, in the primal variational formulation of \eqref{eq_edp_darcy}, the continuous operators by discrete reconstruction operators $\nabla_{\D_m}$, $\Pi_{\D_m}$ in the matrix and $\nabla_{\D_f}$, $\Pi_{\D_f}$, $\jump{\cdot}_\D^\pm$ along the fractures, defined as follows.

The matrix gradient reconstruction operator $\nabla_{\D_m}~:~ X_{\D_m} \longrightarrow L^2 ( \Omega )^d$ is such that, for a suitable symmetric positive definite tensor $(\TT_{K}^{\nu \nu'})_{\nu,\nu'}$, for all $p_{\D_m} \in X_{\D_m}$,
$$
\int_{K} \mathbb{K}_m  \nabla_{\D_m} p_{\D_m} \cdot \nabla_{\D_m} q_{\D_m} =  \sum_{\nu \in I_K} \sum_{\nu' \in I_K} \TT_{K}^{\nu \nu'}\left( p_{\nu} - p_{K}\right) \left( q_{\nu'} - q_{K}\right)\qquad\forall q_{\D_m} \in X_{\D_m},
$$ 
with 
$$
I_K = \left\{    \sigma \in \faces_K\setminus \faces_{\Gamma}\right\} \cup  \left\{ \Ksig,~\sigma \in \faces_K \cap \faces_{\Gamma} \right\}.
$$
The fracture tangential gradient operator $\nabla_{\D_f}~:~X_{\D_f} \longrightarrow L^2 (\Gamma )^{d-1}$ is such that, for a suitable symmetric positive definite tensor $(\TT_{\sigma}^{e e'})_{e,e'}$, for all $p_{\D_f} \in X_{\D_f}$,
$$
\int_{\sigma} C_{f,\D}  \nabla_{\D_f} p_{\D_f} \cdot \nabla_{\D_f} q_{\D_f}  =  \sum_{e \in \edges_{\sigma}} \sum_{e' \in \edges_{\sigma}} \TT_{\sigma}^{e e'}\left( p_{e'} - p_{\sigma}\right) \left( q_{e} - q_{\sigma}\right)\qquad \forall q_{\D_f} \in X_{\D_f},
$$ 
with the face-wise constant approximation of the fracture conductivity given by
$$
C_{f,\D} = {(\d_{f,\D})^3\over 12}
$$
(where $\d_{f,\D}$ is the face-wise constant approximation of the fracture aperture specified in \eqref{discrete_coupling_law}).
The detailed definitions of $(\mathbb{T}_K^{\nu \nu'})_{\nu,\nu'}$ and $(\mathbb{T}_{\sigma}^{e e'})_{e,e'}$ can be found in \cite{BHMS2016}.

The piecewise constant matrix and fracture function reconstruction operators $\Pi_{\D_m}:X_{\D_m} \to   L^2(\Omega )$ and  $\Pi_{\D_f}:X_{\D_f} \to   L^2 (\Gamma)$ are defined by
$$
\begin{aligned}	
&\Pi_{\D_m} q_{\D_m}(\x) = q_K, ~ &\forall \x \in K,~K \in \cells,\\
&\Pi_{\D_f} q_{\D_f}(\x) = q_{\sigma}, ~ &\forall \x \in \sigma,~\sigma \in \faces_{\Gamma}, 
\end{aligned}
$$
and the face-wise constant jump reconstruction operators $\jump{\cdot }^{\pm}_{\D}:X_{\D}  \to L^2 ( \Gamma )$ by  
$$
\jump{q_{\D} }_{\D}^+ (\x)=  q_{\Ksig} - q_{\sigma}, \quad 
\jump{q_{\D} }_{\D}^- (\x)=  q_{\Lsig} - q_{\sigma}, \quad  \forall \x \in \sigma = K|L, \, \sigma\in \faces_\Gamma.    
$$
Let us also define the face-wise constant approximation of the fracture normal transmissivity
$$
\Lambda_{f,\D} = \frac{2 K_{f,\n}}{\d_{f,\D}}. 
$$
Then, the HFV scheme can be expressed as the following discrete variational formulation: find $(p_\D^n)_{n=1,\ldots,N}\in (X_\D^0)^N$ such that, for all $q_{\D} \in X_{\D}^0$ and all $n = 1,\cdots,N$, it holds 
\begin{equation}\label{GD_DarcyFlow}
	\begin{array}{lll}
		& \dsp  \int_{\Omega}  \( \delta_t^n \phi_{\D} \, \Pi_{\D_m} q_{\D_m} + \frac{\mathbb{K}_m}{\eta} \nabla_{\D_m} p_{\D_m}^n \cdot \nabla_{\D_m} q_{\D_m} \)  \\[2ex]
		&\dsp   + \int_{\Gamma}   \( \delta_t^n \d_{f,\D} \, \Pi_{\D_f} q_{\D_f} + \frac{C_{f,\D}^{n-1}}{\eta} \nabla_{\D_f} p_{\D_f}^n \cdot \nabla_{\D_f} q_{\D_f} \) \\[2ex]
		&  \dsp + \sum_{\mathfrak{a} \in \{+,-\}} \int_{\Gamma} \Lambda_{f, \D}^{n-1} \jump{ p_{\D}^n}_{\D}^{\mathfrak{a}}~ \jump{q_\D}_{\D}^{\mathfrak{a}}  
	\dsp = \int_{\Omega} h_m \Pi_{\D_m} q_{\D_m}  + \int_{\Gamma} h_f \nabla_{\D_f} q_{\D_f}, 
	\end{array}
\end{equation}
where  the approximations of the porosity $\phi_{\D}$ and fracture aperture $\d_{f,\D}$  are defined by the coupling laws specified in \eqref{discrete_coupling_law}.

\subsection{Contact-mechanics discretisation and coupling conditions}\label{sec:discretecontactmechanics}

The discretisation of the contact-mechanics \eqref{eq_edp_contact_meca}--\eqref{eq_edp_stress}--\eqref{eq_def_T} is based on a mixed variational formulation set on the spaces of displacement field and of Lagrange multipliers accounting for the surface traction $-\T^+$ along the fractures.
Following \cite{BDMP:21,beaude2023mixed}, we focus on a face-wise constant approximation of the Lagrange multipliers, which allows us to readily deal with fracture networks including intersections, corners and tips. This choice also provides a local expression of the discrete contact conditions leading to the preservation of the dissipative property of the contact term, as well as to efficient non-linear solvers based on semi-smooth Newton algorithms.

In this section, we describe the discretisation of the displacement field using a similar framework as for the Darcy flow based on a vector space of discrete displacement and reconstruction operators. An equivalent Virtual Element formulation is provided in Section \ref{equiv_VEM}.

\subsubsection{Discrete unknowns and spaces}

Let us first define a partition $\ov{\cells}_s$ of the set of cells $\cells_s$ around a given node $s\in \nodes$. For a given cell $K\in \cells_s$ we denote by $\Ks\in \ov{\cells}_s$ the subset of $\cells_s$ such that $\bigcup_{L\in \Ks} \ov{L}$ is the closure of the connected component of $(\bigcup_{L\in \cells_s} \ov{L})\setminus \Gamma$ containing the cell $K$ (denoted by $\Omega_{\Ks}$ in  Figure \ref{dof_3}). In other words, $\Ks$ is the set of cells in $\cells_s$ that are on the same side of $\Gamma$ as $K$. 

To account for the discontinuity of the discrete displacement field at matrix--fracture interfaces, a nodal displacement unknown $\bv_{\Ks}$ is defined for each $\Ks \in \ov{\cells}_s$. There is a unique nodal displacement unknown $\bv_{\Ks}$ at a node $s$ not belonging to $\Gamma$, since $\ov{\cells}_s = \cells_s$ in that case.  On the other hand, for $s\in \nodes_\Gamma$, the nodal displacement unknown $\bv_{\Ks}$ is the one on the side $K$ of the set of fractures connected to $s$. 

The additional bubble displacement unknown $\bv_{\Ksig}$ is on the ``+'' side of a fracture face $\sigma$, and is therefore linked with the following (possibly empty) set $\faces_K^{\Gamma,+}$ of fracture faces of $K$ such that $K$ is on their $+$ side:
\begin{equation*}
	\faces_{\Gamma,K}^+ = 
	\Big\{ \sigma \in \faces_{\Gamma} \cap \faces_{K}| ~\n_{\Ksig} \cdot \n^+  > 0  \Big\}.
\end{equation*}
The vector space $\UD$ of discrete displacements is then defined as 
 \begin{equation*}
\UD = \left\{ \bu_\D = \( (\bv_{\Ks})_{\Ks \in \ov{\cells}_s, s\in \nodes}, (\bv_\Ksig)_{\sigma \in \faces_{\Gamma,K}^+,~ K \in \cells } \)\,:\, \bv_\Ks \in \R^d\,,\; \bv_{\Ksig} \in \R^d \right\},
\end{equation*}
and $\UDz$ is its subspace of vectors having vanishing nodal values at all $ s \in \nodes^{\text{ext}}$.

\begin{figure}[H]
	\begin{center}
	  \includegraphics[width=14cm]{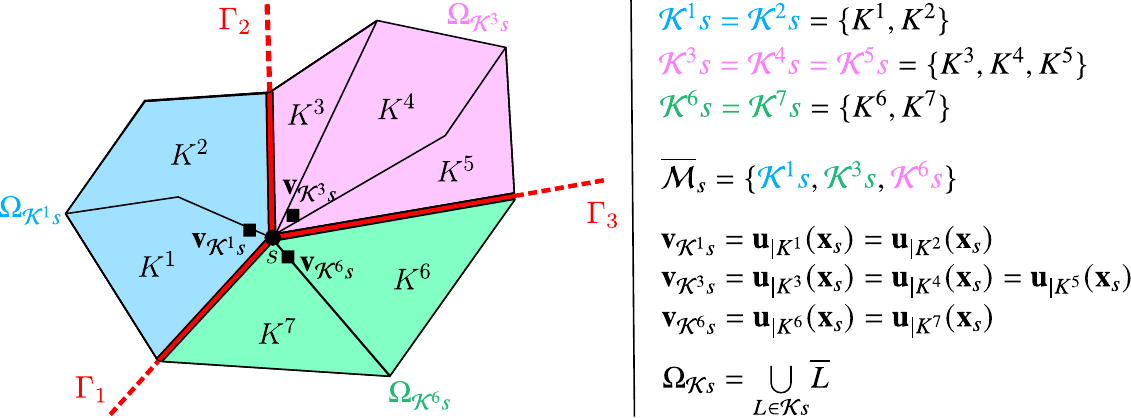}
		\caption{Degrees of freedom $\bv_\Ks$, $\Ks\in \overline{\cells}_s$, at a given node $s \in \nodes_\Gamma$ with three intersecting fractures. $\bv_\Ks$ corresponds to the nodal unknown at node s located on the side $K$ of the fractures. Here, $\bu$ is a fictive continuous function that $\bv$ could interpolate, and is used to give a clearer meaning to the degrees of freedom on each side of the fracture. }\label{dof_3}
	\end{center}
\end{figure}

\begin{remark}[Two-sided bubbles vs.~one-sided bubbles]\label{rem:twobubbles}
It is also possible to define the vector space $\UD$ with bubble unknowns on both sides of each fracture face (two-sided bubbles), which amounts to replacing $\faces_{\Gamma,K}^+$ by $\faces_\Gamma \cap \faces_K$ in the definition of this space. This can lead to a better stabilisation of the Lagrange multiplier, as exhibited in the numerical section in Figure \ref{compression_comparison}, at the price of additional unknowns. Moreover, two-sided bubbles raise additional difficulties for the extension of the scheme to non-matching meshes at matrix--fracture interfaces, as opposed to the one-sided bubble case where both Lagrange multipliers and bubble unknowns can be defined on the same side of the interface.
\end{remark}

The unknowns $\bv_{\Ks}$ correspond to the nodal displacements (see Figure \ref{dof_3}), while the bubble unknowns $\bv_{\Ksig}$ -- which we recall are located on the "+" side of the fracture face $\sigma$ (see Figure \ref{fig_bul}) -- correspond to a correction of the face mean value with respect to the linear nodal reconstruction $\Pi^{\Ksig} \bv_{\D}$ defined below in \eqref{PiKsigma}. These additional bubble unknowns are required to ensure the stability of the mixed variational formulation based on face-wise constant Lagrange-multiplier along the network $\Gamma$.

Let $\mathcal{C}^0( \Omega \setminus \ov{\Gamma})$ denote the space of continuous functions $f:\Omega \setminus \ov{\Gamma} \to \mathbb{R}$ with finite limits on $\partial \Omega$ and on each side of $\Gamma$. 
We define two interpolators $\IvD:\mathcal{C}^0(\Omega \setminus \ov{\Gamma})^d\to\UD$ and $\ID:\mathcal{C}^0(\Omega \setminus \ov{\Gamma})^d\to\UD$. The first one only interpolates at the vertices: for $\bv \in \mathcal{C}^0(\Omega \setminus \ov{\Gamma})^d$, $\IvD\bv\in\UD$ is given by
\begin{equation*}
\begin{array}{ll}
(\IvD \bv)_{\Ks}  = \bv_{|K}(\x_s)&\mbox{ for all } s \in \nodes ,\, \Ks \in  \ov{\cells}_s, \, K\in \Ks,\\
(\IvD \bv )_{\Ksig} = 0& \mbox{ for all } \sigma \in \faces_{\Gamma,K}^+, \,  K \in \cells.
\end{array}
\end{equation*}
The second interpolator keeps these vertex values, and provides face values that correct the trace of the function to take into account the vertex values: $\ID\bv\in\UD$ is defined by
\begin{equation*}
\begin{array}{ll}
(\ID \bv )_{\Ks}  = \bv_{|K}(\x_s)& \mbox{ for all }s \in \nodes ,\, \Ks \in  \ov{\cells}_s,\, K\in \Ks,\\[0.5em]
(\ID \bv)_{\Ksig} = \dsp \frac{1}{|\sigma|} \int_{\sigma} ( \gamma^{\Ksig}{\bv} - \Pi^{\Ksig}(\IvD\bv ) ) &\mbox{ for all }  \sigma \in \faces_{\Gamma,K}^+, \,  K \in \cells.
		\end{array}
\end{equation*}
Here, $\Pi^{\Ksig}$ is the reconstructed face value defined by \eqref{PiKsigma} below (note that this reconstructed value only depends on the degrees of freedom at the vertices).

The vectorial Lagrange multiplier represents the approximations of the surface traction $-\T^+$ on $\Gamma$. Its discretisation is defined by the space $\MD$ of face-wise constant vectorial functions
$$ 
  \MD = \big\{  \l_{\D} \in L^2 (  \Gamma )^d~:~ \l_{\D}(\x) = \l_{\sigma} ~ \forall \sigma \in \faces_{\Gamma}, \forall \x \in \sigma \big\}.
$$
Let us defined the normal and tangential components of $\l_{\D}  \in \MD$ by
$$
  \lambda_{\D,\n}(\x) = \lambda_{\sigma,\n} = \l_{\sigma} \cdot \n_{\Ksig} \,\,\mbox{ and  }\,\, \l_{\D,\tang}(\x) = \l_{\sigma,\tang} = \l_{\sigma} - \lambda_{\sigma,\n} \n_{\Ksig}, \quad \text{for all}~ \x \in \sigma = K|L, ~ \sigma \in \faces_{\Gamma}.
$$
If  $\l_{\D} \in \MD$,  we define the discrete dual cone of admissible Lagrange multipliers as 
$$ 
\CD\left(\lambda_{\D,\n} \right) = \big\{ \m_{\D} = \left( \mu_{\D,\n}, \m_{\D,\tang} \right) \in \MD ~:~ \mu_{\D,\n} \ge 0,~ |\m_{\D,\tang}| \le F \lambda_{\D,\n}  \big\}.
$$

\subsubsection{Reconstruction operators}

For each face $\sigma  \in \faces_K$, $K \in \cells$, the tangential gradient reconstruction operator based on the nodal unknowns is defined by
$$
\begin{aligned}
	&  \nabla^{\Ksig} :  \UD  \to   \Po^{\,0} ( \sigma )^{d \times d}  \mbox{ such that, for all $\bv_\D\in\UD$,} \\
	 & \nabla^{\Ksig} \bv_{\D} = 	\dsp  \frac{1}{|\sigma|} \sum_{e= s_1 s_2 \in \edges_{\sigma}} |e| {\bv_{\Ks_1} + \bv_{\Ks_2} \over 2} \otimes \n_{\sige},
\end{aligned}
$$
and the linear function reconstruction operator by
\begin{equation}\label{PiKsigma}
\begin{aligned}	
  & \Pi^{\Ksig} :  \UD \to  \Po^1 ( \sigma )^d \mbox{ such that, for all $\bv_\D\in\UD$,} \\
  & \Pi^{\Ksig} \bv_{\D}(\x)  = \nabla^{\Ksig} \bv_{\D}  \left(  \x - \ov{\x}_{\sigma} \right) + \ov{\bv}_{\Ksig}\qquad\forall\x\in\sigma,
  \end{aligned}
\end{equation}
with 
\begin{equation}\label{v_x_bar_sigma}
\ov{\bv}_{\Ksig}=\sum_{s \in \nodes_{\sigma} } \omega_{s}^{\sigma}\bv_{\Ks}  ~~\text{and}~~ \ov{\x}_{\sigma} = \sum_{s \in \nodes_{\sigma} } \omega_{s}^{\sigma} \x_s.
\end{equation}
Here, $\omega_{s}^{\sigma}$ are non-negative weights associated to the center of mass of the face $\sigma$ (so that $\ov{\x}_{\sigma}$ is the center of mass of $\sigma$ and $\sum_{s\in\nodes_\sigma}\omega_s^\sigma=1$). 

For each fracture face  $\sigma= K|L \in \faces_{\Gamma}$, we introduce the displacement jump operator: 
\begin{equation*}
	\begin{aligned}
	&\jump{\cdot}_{\sigma} : \UD \to  \Po^{\,0}( \sigma )^d\mbox{ such that, for all $\bv_\D\in\UD$,}\\
	&\jump{\bv_{\D}}_{\sigma}  =  \frac{1}{|\sigma|} \int_{\sigma}\left(  \Pi^{\Ksig}  \bv_{\D} - \Pi^{\Lsig} \bv_{\D} \right) \d\sigma  +  \bv_{\Ksig},
	\end{aligned}
\end{equation*}
as well as its normal and tangential components $\jump{\cdot}_{\sigma,\n} =  \jump{\cdot}_{\sigma} \cdot \n_{\Ksig}$ and $\jump{\cdot}_{\sigma,\tang} =  \jump{\cdot}_{\sigma} - \jump{\cdot}_{\sigma,\n}\, \n_{\Ksig}$.

\begin{remark}[Discrete jump]
It can easily be checked that, if $\bv\in C^0(\overline{\Omega}\backslash\Gamma)^d$, then
$$
  \jump{\ID\bv}_{\sigma}=\frac{1}{|\sigma|} \int_{\sigma}\left(  \gamma^{\Ksig} \bv - \Pi^{\Lsig} \ID\bv \right),
$$
showing that $\jump{\ID\bv}_{\sigma}$ is an approximation of the (average of the exact) jump, using the exact trace of $\bv$ on the positive side and an approximate trace on the negative side.

In case of two bubbles (see Remark \ref{rem:twobubbles}), the jump is defined by
$$
\jump{\bv_{\D}}_{\sigma}  =  \frac{1}{|\sigma|} \int_{\sigma}\left(  \Pi^{\Ksig}  \bv_{\D} - \Pi^{\Lsig} \bv_{\D} \right) \d\sigma  +  (\bv_\Ksig-\bv_\Lsig),
$$
and, when applied to interpolated vectors, provides the average of the exact continuous jump:
$$
  \jump{\ID\bv}_{\sigma}=\frac{1}{|\sigma|}\int_\sigma\jump{\bv}.
$$
\end{remark}

For each cell $K$, we define the gradient reconstruction operator
\begin{equation}\label{eq:def.nablaK}
	\begin{aligned}
	& \nabla^{K} : \UD  \to   \Po^{\,0} ( \sigma )^{d \times d}  \mbox{ such that, for all $\bv_\D\in\UD$,}\\
	& \nabla^{K} \bv_{\D}   = \dsp \sum_{\sigma \in \faces_{\Gamma,K}^+}  \frac{|\sigma|}{|K|} \bv_{\Ksig} \otimes \n_{\Ksig}  +  \sum_{\sigma \in \faces_{K}} \frac{|\sigma|}{|K|}\ov{\bv}_{\Ksig}\otimes \n_\Ksig,
\end{aligned}
\end{equation}
where $\ov{\bv}_\Ksig$ is defined by \eqref{v_x_bar_sigma}.
The linear function reconstruction operator is
\begin{equation}\label{eq:def.piK}
  	\begin{aligned}
& \Pi^{K} :\UD \to \Po^{1}( K )^{d} \mbox{ such that, for all $\bv_\D\in\UD$,}\\
&  \Pi^{K} \bv_{\D}(\x)   = \dsp \nabla^{K} \bv_{\D}\left(  \x - \ov{\x}_{K}\right) + \ov{\bv}_{K}\quad\forall\x\in K,
\end{aligned}
\end{equation}
with 
\begin{equation*}
\ov{\bv}_{K} = \sum_{s \in \nodes_K } \omega_{s}^K \bv_{\Ks} ~~\text{and}~~ \ov{\x}_K=\sum_{s \in \nodes_K }  \omega_{s}^K  \x_s,
\end{equation*}
where $\omega_{s}^{K}$ are non-negative weights such that $\ov{\x}_K$ is the center of mass of $K$ and $\sum_{s\in\nodes_K}\omega_s^K=1$.
By construction, all these local reconstruction operators are exact on linear functions in the sense that
$\nabla^K \ID \q = \nabla \q$ and $\Pi^K \ID \q = \q$ for all $\q \in \Po^1(K)^d$, and $\nabla^\Ksig \ID \q = \nabla_\tang \q$ and $\Pi^\Ksig \ID \q = \q$, for all $\q \in \Po^1(\sigma)^d$. 

Let us now define the global discrete reconstruction $\Pi_{\D} : \UD   \to L^2 (\Omega)^d$ and the global discrete gradient $\nabla_{\D} : \UD  \to L^2(\Omega)^{d\times d}$ such that
$$
\Pi_{\D} \bv_{\D} (\x) =\Pi^K \bv_{\D}(\x) \quad \mbox{ and } \quad  \nabla_{\D} \bv_{\D} (\x)  =  \nabla^K \bv_{\D}  
\quad \mbox{ for all } \x \in K,~ K \in \cells.
$$
From the discrete gradient, we deduce the following discrete symmetric gradient, divergence and stress: 
$$
\bbeps_{\D}(\cdot) =\frac{1}{2}(\nabla_{\D}(\cdot)+     \prescript{t}{}{\nabla_{\D}}(\cdot)),~ \div_{\D}(\cdot) = \text{Tr}\left( \bbeps_{\D}(\cdot)\right) ~ \text{and}~ \bbsigma_{\D}(\cdot) = 2\mu\bbeps_{\D}(\cdot) + \lambda \div_{\D}(\cdot) \mathbb{I}.
$$
Finally, we define the discrete global displacement jump operator
\begin{equation*}
		\begin{aligned}
&	 \jump{\cdot}_{\D} : \UD  \to  L^{2} ( \Gamma )^d  \mbox{ such that, for all $\bv_\D\in\UD$,}\\
&  \jump{\bv_{\D}}_{\D}(\x)  =   \jump{\bv_{\D}}_{\sigma},~ \forall \x \in \sigma, ~ \sigma \in \faces_{\Gamma},
\end{aligned}	
\end{equation*}
as well as its normal and tangential components,
$$
\jump{\bv_{\D}}_{\D,\n} =  \jump{\bv_{\D}}_{\D} \cdot \n^{+}\quad\mbox{ and }\quad  \jump{\bv_{\D}}_{\D,\tang} =  \jump{\bv_{\D}}_{\D} - \jump{\bv_{\D}}_{\D,\n} \n^{+}. 
$$

The discrete $H^1$-like semi-norm on $\UD$ is defined by: for all $\bu_\D\in \x_\D$,
$$
\NORM{\D}{\bu_\D}:= \left(\sum_{K\in\mathcal M}\NORM{L^2(K)}{\nabla^K\bu_\D}^2+S_K(\bu_\D,\bu_\D)\right)^{1/2},
$$
where the local stabilisation term is given by the bilinear form $S_K: \UD\times\UD \to\R$ such that, for all $\bu_\D,\bv_\D\in\UD$,
\begin{equation}\label{eq:StabK}
S_K(\bu_\D,\bv_\D)= h_{K}^{d-2}  \sum_{s \in \nodes_K} (\bu_{\Ks} -\Pi^{K}\bu_{\D} (\x_s) ) \cdot ( \bv_{\Ks} -\Pi^{K}\bv_{\D}  (\x_s) ) 
		 + \sum_{\sigma \in \faces_{\Gamma,K}^+} h_{K}^{d-2} \,\bu_{\Ksig}  \cdot  \bv_{\Ksig}.
\end{equation}

\subsubsection{Discrete mixed variational formulation}
                 
We can now introduce the mixed variational discretisation of the contact-mechanical problem \eqref{Lagrange_meca_contactfriction}: Find $( \bu_{\D}^n, \l_{\D}^n )_{n=1,\ldots,N} \in (\UDz \times \CD( \lambda_{\D,\n}^n))^N$ such that, for all $( \bv_{\D}, \m_{\D} ) \in \UDz \times \CD(\lambda_{\D,\n}^n)$ and all $n=1,\cdots,N$, it holds 
  \begin{subequations}\label{mixed_discrete}
  	\begin{align}
  		& \int_{\Omega} \bbsigma_{\D} (\bu_{\D}^n): \bbeps_{\D}\left(\bv_{\D} \right)  + S_{\mu,\lambda,\D}\left( \bu_{\D}^n, \bv_{\D} \right) - \int_{\Omega} b \Pi_{\D_m} p_{\D_m}^n \div_{\D} \bv_{\D}  \nonumber \\[1.ex]
  		& + \int_{\Gamma} \Pi_{\D_f} p_{\D_f}^n  \jump{\bv_{\D}}_{\D,\n}   + \int_{\Gamma} \l_{\D}^n \cdot \jump{\bv_{\D}}_{\D}  = \sum_{K \in \cells}   \int_{K}  \mathbf{f}_K^n \cdot \Pi_{\D} \bv_{\D},  \label{eq:meca.var} \\[2.ex] 
  		& \int_{\Gamma} \left( ( \mu_{\D,\n} - \lambda_{\D,\n}^n ) \jump{\bu_{\D}^n}_{\D,\n} + ( \m_{\D,\tang} - \l_{\D,\tang}^n ) \cdot\jump{\delta_t^n \bu_{\D}}_{\D,\tang} \right) \le 0, \label{eq:meca.ineq.contact}
  	\end{align}
  \end{subequations}
  with $\dsp \mathbf{f}_K^n = \frac{1}{ |K| \Delta t^n } \int_{t^{n-1}}^{t^n} \int_{K} \mathbf{f}$ and
  the scaled stabilisation form $S_{\mu,\lambda,\D}$ defined by
\begin{equation*}
		 S_{\mu,\lambda,\D}( \bu_{\D}, \bv_{\D}) = \sum_{K\in\mathcal M}(2 \mu_K+\lambda_K)S_K(\bu_\D,\bv_D).
\end{equation*}
Thanks to the fracture face-wise constant Lagrange multiplier, the variational inequality  \eqref{eq:meca.ineq.contact} together with $\l_{\D}^n \in \CD ( \lambda_{\D,\n}^n )$ can equivalently be replaced by the following non linear equations:
\begin{equation}\label{eq:semismoothnewton}
	\left\{
	\begin{array}{l}
		\lambda_{\D,\n}^n = [ \lambda_{\D,\n}^n + \beta_{\D,\n} \jump{\bu_{\D}^n}_{\D,\n} ]_{\mathbb{R}^+} \\[0.5ex]
		\lambda_{\D,\tang}^n = [ \lambda_{\D,\tang}^n + \beta_{\D,\tang} \jump{\delta_t^n \bu_{\D}}_{\D,\tang} ]_{F\lambda_{\D,\n}^n}, 
	\end{array}
	\right.
\end{equation}
where $\beta_{\D,\n} > 0$ and $\beta_{\D,\tang} > 0$ are arbitrarily chosen face-wise constant functions along $\Gamma$, $[x]_{\R^+} = \max\{0,x\}$, and $[\cdot]_\alpha$ is the projection on the ball of radius $\alpha$ centered at $0$, that is:
$$
[{\bf x}]_\alpha = 
\left\{\begin{array}{ll}
{\bf x} & \mbox{if } |{\bf x}| \leq \alpha,\\
\displaystyle \alpha { {\bf x} \over |{\bf x}| } & \mbox{otherwise}.
\end{array}\right.
$$
The equations \eqref{eq:semismoothnewton} can be expressed locally to each fracture face and lead to an efficient semi-smooth Newton solver for the discrete contact-mechanics; see \cite{beaude2023mixed} for more details.

\subsubsection{Discretisation of the coupling conditions}\label{subsec:discrete_coupl_cond}

The set of equations \eqref{GD_DarcyFlow}--\eqref{eq:meca.var}--\eqref{eq:semismoothnewton} is combined with the following coupling conditions: 
\begin{equation}\label{discrete_coupling_law}
	\begin{array}{ll}
		&\delta_t^n \phi_{\D} = b~ \div_{\D} \left( \delta_t^n \bu_{\D}\right) + \frac{1}{M} \Pi_{\D_m} \delta_t^n p_{\D_m}, \\[1ex]
		& \d_{f,\D}^n =   \d_{f,\D}^c - \jump{\bu_{\D}^n }_{\D,\n},
	\end{array}
\end{equation}
defining the discrete porosity $\phi_{\D}^{n}$ and fracture aperture $\d_{f,\D}^n$ for all $n \ge 0$, given a cell-wise constant approximation $\phi_{\D}^0$ of the initial porosity $\phi^0$ and a face-wise constant approximation $\d_{f,\D}^c$ of the contact aperture $\d_{f}^c$.   

\subsubsection{Discrete energy estimate}\label{subsec:energyestimate}

\begin{proposition}[Discrete energy estimate] \label{prop:energy.estimate}
  Any solution $(p^n_\D,\bu^n_\D,\l_D^n)_{n=1,\ldots,N}\in (X_{\D}^0\times \UDz \times \CD( \lambda_{\D,\n}^n))^N$  of the fully coupled scheme \eqref{GD_DarcyFlow}--\eqref{eq:meca.var}--\eqref{eq:semismoothnewton}--\eqref{discrete_coupling_law} satisfies the following discrete energy estimates for all $n=1,\cdots,N$:
\begin{equation}\label{eq:eergy.estimate}
  \begin{aligned}
    &  \delta_t^n \int_{\Omega}  {1 \over 2} \( \bbsigma_{\D} (\bu_{\D}): \bbeps_{\D}\left(\bu_{\D} \right)  + S_{\mu,\lambda,\D}\left( \bu_{\D}, \bu_{\D} \right)  + {1 \over M}  |\Pi_{\D_m} p_{\D_m}|^2  \) 
      + \int_\Gamma F \lambda_{\D,\n}^n |\jump{\delta_t^n \bu_{\D}}_{\D,\tang}|  \\
    &  + \int_{\Omega} \frac{\mathbb{K}_m}{\eta} \nabla_{\D_m} p_{\D_m}^n \cdot \nabla_{\D_m} p^n_{\D_m} + 
       \int_{\Gamma}    \frac{C_{f,\D}^{n-1}}{\eta} |\nabla_{\D_f} p_{\D_f}^n|^2 
       + \sum_{\mathfrak{a} \in \{+,-\}} \int_{\Gamma} \Lambda_{f, \D}^{n-1} (\jump{ p_{\D}^n}_{\D}^{\mathfrak{a}} )^2 \\       
       & \leq \int_{\Omega} h_m \Pi_{\D_m} p^n_{\D_m}  + \int_{\Gamma} h_f \nabla_{\D_f} p^n_{\D_f}
       + \sum_{K \in \cells}   \int_{K}  \mathbf{f}_K^n \cdot \Pi_{\D} \delta_t^n \bu_{\D}.  
  \end{aligned}  
\end{equation}  
In addition, the discrete fracture aperture satisfies the lower bound
$\d_{f,\D}^n \geq   \d_{f,\D}^c$, which ensures the positivity of the fracture conductivity $C_{f,\D}^{n-1}$ and normal transmissivity $\Lambda_{f, \D}^{n-1}$. 
\end{proposition}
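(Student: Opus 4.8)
Here is a plan of proof.

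The plan is to test the two linear equations of the coupled scheme against suitable discrete functions, add the results, and combine the coupling relations \eqref{discrete_coupling_law} with the complementarity structure of \eqref{eq:semismoothnewton} to recover \eqref{eq:eergy.estimate}. Concretely, I would choose $\bv_\D=\delta_t^n\bu_\D$ in the mechanical equation \eqref{eq:meca.var} and $q_\D=p_\D^n$ in the Darcy equation \eqref{GD_DarcyFlow}, and sum the two identities. The terms $\int_\Omega\bbsigma_\D(\bu_\D^n):\bbeps_\D(\delta_t^n\bu_\D)$, $S_{\mu,\lambda,\D}(\bu_\D^n,\delta_t^n\bu_\D)$ and $\frac1M\int_\Omega\Pi_{\D_m}(\delta_t^n p_{\D_m})\,\Pi_{\D_m}p_{\D_m}^n$ are each handled by the elementary convexity inequality $\mathfrak b(x^n,x^n-x^{n-1})\ge\tfrac12\bigl(\mathfrak b(x^n,x^n)-\mathfrak b(x^{n-1},x^{n-1})\bigr)$, valid for any symmetric positive semi-definite bilinear form $\mathfrak b$ and applied here to the (semi-definite) forms $\int_\Omega\bbsigma_\D(\cdot):\bbeps_\D(\cdot)$, $S_{\mu,\lambda,\D}$ and $\frac1M\int_\Omega\Pi_{\D_m}(\cdot)\,\Pi_{\D_m}(\cdot)$; after division by $\Delta t^n$ this produces the discrete time-increments $\delta_t^n(\cdot)$ of the mechanical, stabilisation and compressibility energies on the left-hand side of \eqref{eq:eergy.estimate}. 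The Darcy dissipation terms $\int_\Omega\frac{\mathbb{K}_m}{\eta}|\nabla_{\D_m}p_{\D_m}^n|^2$, $\int_\Gamma\frac{C_{f,\D}^{n-1}}{\eta}|\nabla_{\D_f}p_{\D_f}^n|^2$ and $\sum_{\mathfrak a}\int_\Gamma\Lambda_{f,\D}^{n-1}(\jump{p_\D^n}_\D^{\mathfrak a})^2$ are already in the required form, and the source contributions move to the right-hand side.

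I would then eliminate the two cross-coupling terms. Substituting the first line of \eqref{discrete_coupling_law} into $\int_\Omega\delta_t^n\phi_\D\,\Pi_{\D_m}p_{\D_m}^n$ produces $+b\int_\Omega\div_\D(\delta_t^n\bu_\D)\,\Pi_{\D_m}p_{\D_m}^n$, which exactly cancels the Biot term $-b\int_\Omega\Pi_{\D_m}p_{\D_m}^n\,\div_\D(\delta_t^n\bu_\D)$ coming from \eqref{eq:meca.var} tested with $\delta_t^n\bu_\D$; likewise, substituting the second line of \eqref{discrete_coupling_law} into $\int_\Gamma\delta_t^n\d_{f,\D}\,\Pi_{\D_f}p_{\D_f}^n$ gives $-\int_\Gamma\jump{\delta_t^n\bu_\D}_{\D,\n}\,\Pi_{\D_f}p_{\D_f}^n$ (using the linearity of the jump and that $\d_{f,\D}^c$ does not depend on time), which cancels the fracture-pressure term $+\int_\Gamma\Pi_{\D_f}p_{\D_f}^n\,\jump{\delta_t^n\bu_\D}_{\D,\n}$ from \eqref{eq:meca.var}. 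After these cancellations the only term left to treat is the Lagrange-multiplier contribution $\int_\Gamma\l_\D^n\cdot\jump{\delta_t^n\bu_\D}_\D$.

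The core of the argument is to show that this term is bounded below by the friction dissipation $\int_\Gamma F\lambda_{\D,\n}^n|\jump{\delta_t^n\bu_\D}_{\D,\tang}|$, using the face-wise identities \eqref{eq:semismoothnewton}. Splitting it as $\int_\Gamma\lambda_{\D,\n}^n\jump{\delta_t^n\bu_\D}_{\D,\n}+\int_\Gamma\l_{\D,\tang}^n\cdot\jump{\delta_t^n\bu_\D}_{\D,\tang}$, I would first note that the first line of \eqref{eq:semismoothnewton} is equivalent, on each fracture face, to the complementarity conditions $\lambda_{\D,\n}^n\ge0$, $\jump{\bu_\D^n}_{\D,\n}\le0$ and $\lambda_{\D,\n}^n\jump{\bu_\D^n}_{\D,\n}=0$; hence $\lambda_{\D,\n}^n\jump{\delta_t^n\bu_\D}_{\D,\n}=-\tfrac1{\Delta t^n}\lambda_{\D,\n}^n\jump{\bu_\D^{n-1}}_{\D,\n}\ge0$, where $\jump{\bu_\D^{n-1}}_{\D,\n}\le0$ follows from \eqref{eq:semismoothnewton} at step $n-1$ when $n\ge2$ and from the admissibility of the initial configuration when $n=1$; so the normal part is nonnegative and may simply be discarded. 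For the tangential part, the second line of \eqref{eq:semismoothnewton} says that $\l_{\D,\tang}^n$ is the projection of $\l_{\D,\tang}^n+\beta_{\D,\tang}\jump{\delta_t^n\bu_\D}_{\D,\tang}$ onto the ball of radius $F\lambda_{\D,\n}^n$; inspecting the two cases (argument inside the ball, so $\jump{\delta_t^n\bu_\D}_{\D,\tang}=0$; or argument truncated, so $\jump{\delta_t^n\bu_\D}_{\D,\tang}$ is a nonnegative multiple of $\l_{\D,\tang}^n$ and $|\l_{\D,\tang}^n|=F\lambda_{\D,\n}^n$) shows that $\l_{\D,\tang}^n\cdot\jump{\delta_t^n\bu_\D}_{\D,\tang}=F\lambda_{\D,\n}^n|\jump{\delta_t^n\bu_\D}_{\D,\tang}|$ pointwise. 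Integrating over $\Gamma$ gives precisely the friction term in \eqref{eq:eergy.estimate}, and collecting all of the above yields the estimate.

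Finally, the aperture bound $\d_{f,\D}^n\ge\d_{f,\D}^c$ is an immediate consequence of the second line of \eqref{discrete_coupling_law} and of $\jump{\bu_\D^n}_{\D,\n}\le0$ established above; since $\d_{f,\D}^c$ is strictly positive, this gives $C_{f,\D}^{n-1}=(\d_{f,\D}^{n-1})^3/12>0$ and $\Lambda_{f,\D}^{n-1}=2K_{f,\n}/\d_{f,\D}^{n-1}>0$ (with the convention that the initial aperture $\d_{f,\D}^0$ is itself $\ge\d_{f,\D}^c>0$). The only genuinely delicate point is the bookkeeping of the contact terms -- extracting the complementarity relation and the pointwise friction identity from the projection equations \eqref{eq:semismoothnewton}, and handling the first time step, where one must invoke an admissibility hypothesis on the initial state rather than \eqref{eq:semismoothnewton} itself; everything else is routine manipulation.
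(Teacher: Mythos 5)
Your proposal is correct and follows essentially the same route as the paper's (much more condensed) proof: test \eqref{eq:meca.var} with $\delta_t^n\bu_\D$ and \eqref{GD_DarcyFlow} with $p_\D^n$, cancel the Biot and fracture-pressure cross terms via \eqref{discrete_coupling_law}, and bound the Lagrange-multiplier term below by the friction dissipation using the complementarity form of \eqref{eq:semismoothnewton} (the paper's \eqref{eq:equiv.var}, persistency condition and \eqref{eq:dissipiativity.contact}). Your explicit remark that the case $n=1$ of the persistency argument requires $\jump{\bu_\D^0}_{\D,\n}\le 0$ as an admissibility assumption on the initial state is a detail the paper leaves implicit, and is worth keeping.
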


\begin{proof}
  We only recall the main steps of the proof, which follows the lines of the proof of \cite[Eq.~(19)]{BDMP:21}.  We first recall \cite[Lemma~4.1]{BDMP:21} which shows that an equivalent form of \eqref{eq:semismoothnewton} is
  \begin{equation}\label{eq:equiv.var}
  \begin{aligned}
    &  \lambda_{\D,\n}^n \geq 0, \quad  \jump{\bu_{\D}^n}_{\D,\n} \leq 0, \quad \lambda_{\D,\n}^n \jump{\bu_{\D}^n}_{\D,\n} = 0,\\
  &   |\lambda_{\D,\tang}^n| \leq F \lambda_{\D,\n}^n, \quad \lambda_{\D,\tang}^n\cdot \jump{\delta_t^n \bu_{\D}}_{\D,\tang} - F \lambda_{\D,\n}^n |\jump{\delta_t^n \bu_{\D}}_{\D,\tang} | = 0. 
  \end{aligned}
  \end{equation}
    As in the proof of \cite[Theorem~4.2]{BDMP:21}, the following discrete persistency condition follows from \eqref{eq:equiv.var}: $\lambda_{\D,\n}^n \jump{\delta_t^n \bu_{\D}}_{\D,\n} \geq 0$. In turn, this condition yields the dissipative property of the contact term:
  \begin{equation}\label{eq:dissipiativity.contact}
  \int_\Gamma \l_{\D}^n \cdot \jump{\delta_t^n\bu_{\D}}_{\D}  \geq \int_\Gamma F \lambda_{\D,\n}^n |\jump{\delta_t^n \bu_{\D}}_{\D,\tang}|  \geq 0. 
  \end{equation}
   Then, setting $\bv_\D = \delta_t^n\bu_{\D}$ in \eqref{eq:meca.var} and $q_{\D_m} = p^n_{\D_m}$ in  \eqref{GD_DarcyFlow}, taking into account the coupling equations  \eqref{discrete_coupling_law} and \eqref{eq:dissipiativity.contact}, we obtain \eqref{eq:eergy.estimate}. The lower bound on the discrete fracture aperture follows directly from \eqref{discrete_coupling_law} and $\jump{\bu_{\D}^n}_{\D,\n} \leq 0$ as stated in \eqref{eq:equiv.var}.
\end{proof}

Following \cite{BDMP:21}, in order to deduce from \eqref{eq:eergy.estimate} a priori estimates and the existence of a discrete solution, we need to establish a discrete Korn inequality in $\UDz$, and a discrete inf-sup condition for the bilinear form $\int_{\Gamma} \l_{\D} \cdot \jump{\bv_{\D}}_{\D}$ in $\UDz\times \MD$. This is a work in progress, which requires new developments related to the additional bubble unknowns and to fracture networks including tips and intersections.

\subsection{An equivalent VEM formulation of the contact-mechanics}\label{equiv_VEM}

We show here that the previous discretisation of the mechanics has an equivalent VEM formulation, based on the same displacement degrees of freedom. The scheme we propose can therefore be interpreted as a $\Po^1$-bubble VEM discretisation. 

The VEM framework provides an extension of Finite Element Methods (FEM) to polyhedral meshes, see the seminal paper \cite{beirao2013basic}. As for FEM, it builds a subspace $\bV_h$ of $\U$ by gluing together local spaces $\bV_h^K\subset H^1(K)^d$ defined in each cell $K\in \cells$. On the other hand, the basis functions do not have in general an analytical expression, and only certain projections of them onto polynomial spaces can be explicitly calculated from the degrees of freedom. The bilinear form is then obtained from the continuous one using these projections, and by stabilising their kernel. In the following, we first exhibit the connection between the VEM face $\pi^{\Ksig}$ and cell $\pi^K$ projectors and the reconstruction operators $\Pi^{K\sigma}$ and $\Pi^K$. Then, the VEM local and global spaces are defined leading to the stabilised bilinear form and the equivalent VEM mixed variational formulation of the contact-mechanical problem. The unisolvence of the degrees of freedom in the VEM space $\bV_h$ is also shown. Detailed proofs are reported to Appendix \ref{appendice_VEM}.

\subsubsection{VEM projectors, function spaces and equivalent mixed variational formulation}

For each $K \in \cells$, let $\pi^K$ be the local projection operator defined  by:
\begin{equation}\label{piK}
	\begin{aligned}	
	&	\pi^K : \mathcal{C}^0(\ov{K})^d  \to  \Po^1(K)^{d}  \mbox{ such that, for all $\bv \in \mathcal{C}^0 ( \ov{K})^d$},\\
	& \pi^K  \bv   =\Pi^K \circ \ID \bv ,
	\end{aligned}
\end{equation}
where, by abuse of notation, the interpolator $\ID$ is applied to the extension by zero of $\bv$ outside $\ov{K}$. 
Similarly, for each $\sigma \in \faces_K$, $K \in \cells$,  let $\pi^{\Ksig}$ be the local projection operator defined by:
\begin{equation}\label{pisigma}
	\begin{aligned}	
	&	\pi^{\Ksig} : \mathcal{C}^0 ( \ov{\sigma})^d  \to \Po^1(\sigma)^{d}  \mbox{ such that, for all $\bv \in \mathcal{C}^0 ( \ov{\sigma} )^d$},\\
		& \pi^{\Ksig} \bv     =\Pi^{\Ksig} \circ \ID \bv .
\end{aligned}
\end{equation}
The local VEM space for the displacement field on each cell $K \in \cells$ is
\begin{equation}\label{eq:def.VhK}
	\begin{split}
		\bV_{h}^{K} = 
		\Big\{ \bv \in  \mathcal{C}^0 ( \ov{K} )^d|~ & \gamma^{\Ksig}{\bv} \in  \bV_{h}^{\Ksig} ,~ \forall  \sigma \in \faces_K;\\
                &\Delta \bv \in \Po^1 ( K )^d;~ \int_{K} (\pi^{K} \bv) \cdot \mathbf{p} =  \int_{K}  \bv \cdot \mathbf{p}, ~ \forall \mathbf{p} \in \Po^1(K)^d  \Big\},
	\end{split} 
\end{equation}
with
\begin{equation}\label{eq:def.VKsig}
	\begin{split}
		\bV_{h}^{\Ksig} = 
		\Big\{ \bv \in  \mathcal{C}^0(\ov{\sigma})^d |~& \gamma^{\sige}{\bv}  \in  \Po^1(e)^d,~ \forall e \in \edges_{\sigma};\\ &\Delta_{\tang} \bv \in \Po^1(\sigma)^d ~\text{in}~ \sigma;~ \int_{\sigma} (\pi^{\Ksig} \bv) \cdot \mathbf{p} =  \int_{\sigma}  \bv \cdot \mathbf{p}, ~ \forall \mathbf{p} \in (\Q_{\Ksig})^d   \Big\},
	\end{split} 
\end{equation}
where $\Q_\Ksig = \Po^1(\sigma)$ if $\sigma \in \faces_K \setminus \faces_{\Gamma,K}^+$ -- corresponding to the no-bubble case -- and
$\Q_\Ksig$ is a complementary space of constant functions in $\Po^{1}(\sigma)$ if $\sigma \in \faces_{\Gamma,K}^+$ -- corresponding to the bubble case.   

\begin{lemma}[Link between discrete reconstructions and elliptic projectors] \label{lem:reconstruction.projection}
For all $K \in \cells$, the projector $\pi^K:\bV_h^K\to \Po^1(K)^d$ is the elliptic projector, that is, it satisfies: for all $ \bv\in\bV_h^K$,
\begin{subequations}
	\label{proj_p1_K}
	\begin{align}
		& \int_{K} \nabla (\pi^K\bv): \nabla \q =\int_{K} \nabla \bv: \nabla \q \qquad \forall \q \in \Po^1(K)^{d},\label{proj_1_1}\\[1.5ex]
    	&	  (\pi^K\bv)(\ov{\x}_K)  = 	\sum_{s \in \nodes_{K} } \omega_{s}^{K}  \bv(\x_s).\label{proj_1_2}
	\end{align}
\end{subequations}
For all $\sigma\in \faces_K$, the projector $\pi^\Ksig:\bV_h^\Ksig\to\Po^1(\sigma)^d$ is the elliptic projector for the tangential gradient, that is, it satisfies: for all $\bv\in\bV_h^\Ksig$,
\begin{subequations}
	\label{proj_p1_sigma}
	\begin{align}
		& \int_{\sigma} \nabla_{\tang} (\pi^{\Ksig}\bv): \nabla_{\tang}\q  =\int_{\sigma} \nabla_{\tang} \bv: \nabla_{\tang}\q \qquad \forall \q \in \Po^1(\sigma)^{d},\label{proj_2_1}\\[1.5ex]
		& (\pi^{\Ksig}\bv)(\ov{\x}_{\sigma})  = 	\sum_{s \in \nodes_{\sigma} } \omega_{s}^{\sigma}  \bv(\x_s).\label{proj_2_2}
	\end{align}
\end{subequations}
\end{lemma}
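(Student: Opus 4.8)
The plan is to prove the two properties characterising the elliptic projector directly from the definition $\pi^K\bv=\Pi^K\circ\ID\bv$ and the structure of the local VEM space $\bV_h^K$, treating the cell case \eqref{proj_p1_K} first and then transferring the argument verbatim to the face case \eqref{proj_p1_sigma} with $\nabla$ replaced by $\nabla_\tang$ and $K$ by $\sigma$. For the normalisation conditions \eqref{proj_1_2} and \eqref{proj_2_2}: since $\Pi^K\bv_\D$ is affine with $\Pi^K\bv_\D(\ov\x_K)=\ov\bv_K=\sum_{s\in\nodes_K}\omega_s^K\bv_{\Ks}$ by \eqref{eq:def.piK}, and $(\ID\bv)_{\Ks}=\bv_{|K}(\x_s)=\bv(\x_s)$ for $s\in\nodes_K$, evaluating $\pi^K\bv=\Pi^K(\ID\bv)$ at $\ov\x_K$ immediately gives \eqref{proj_1_2}; the face case is identical using \eqref{PiKsigma} and \eqref{v_x_bar_sigma}.

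The substantive part is the Galerkin orthogonality \eqref{proj_1_1}. The key observation is that $\pi^K$ reproduces $\Po^1(K)^d$: the reconstruction operators are exact on linear functions, as stated just after \eqref{eq:def.piK} ($\Pi^K\ID\q=\q$ for $\q\in\Po^1(K)^d$), so $\pi^K\q=\q$ for affine $\q$, whence both sides of \eqref{proj_1_1} trivially agree when $\bv$ itself is affine. It therefore suffices to prove \eqref{proj_1_1} for $\bv$ in a complement; more efficiently, one proves the identity for a general $\bv\in\bV_h^K$ by integrating by parts on both sides. On the right, $\int_K\nabla\bv:\nabla\q=-\int_K\bv\cdot\Delta\q+\int_{\de K}(\nabla\q\,\n_K)\cdot\bv=\int_{\de K}(\nabla\q\,\n_K)\cdot\bv$ since $\q$ is affine so $\Delta\q=0$; and $\nabla\q\,\n_{\Ksig}$ is constant on each face $\sigma\in\faces_K$, so the boundary term is $\sum_{\sigma\in\faces_K}(\nabla\q\,\n_{\Ksig})\cdot\int_\sigma\gamma^{\Ksig}\bv$. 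On the left, the same integration by parts applied to $\pi^K\bv\in\Po^1(K)^d$ gives $\sum_{\sigma}(\nabla\q\,\n_{\Ksig})\cdot\int_\sigma\gamma^{\Ksig}(\pi^K\bv)$. Hence \eqref{proj_1_1} reduces to showing that, for every face $\sigma\in\faces_K$, $\int_\sigma\gamma^{\Ksig}(\pi^K\bv)=\int_\sigma\gamma^{\Ksig}\bv$. This in turn follows by first establishing the face-level identity: the face restriction of $\pi^K\bv$ equals, up to a constant, the face reconstruction $\Pi^\Ksig$; more precisely one checks from \eqref{eq:def.nablaK}--\eqref{eq:def.piK} and \eqref{PiKsigma} that $\int_\sigma\gamma^{\Ksig}(\Pi^K\bv_\D)=|\sigma|\big(\nabla^\Ksig\bv_\D(\ov\x_\sigma-\ov\x_\sigma)+\ov\bv_\Ksig\big)+$ (correction from $\bv_\Ksig$ when $\sigma\in\faces_{\Gamma,K}^+$), and then invokes the definition of $\ID$: $(\ID\bv)_\Ksig=\frac1{|\sigma|}\int_\sigma(\gamma^\Ksig\bv-\Pi^\Ksig(\IvD\bv))$, which is precisely engineered so that $\frac1{|\sigma|}\int_\sigma\Pi^\Ksig(\ID\bv)=\frac1{|\sigma|}\int_\sigma\gamma^\Ksig\bv$. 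Combining with the polynomial-consistency constraint $\int_K(\pi^K\bv)\cdot\p=\int_K\bv\cdot\p$ for $\p\in\Po^1(K)^d$ built into \eqref{eq:def.VhK}, one pins down the remaining affine degrees of freedom and closes \eqref{proj_1_1}.

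For the face projector, the argument is the same one dimension down: integrate by parts on $\sigma$ using $\Delta_\tang\q=0$ for affine $\q$, reduce to edge integrals $\int_e\gamma^{\sige}(\pi^\Ksig\bv)$ versus $\int_e\gamma^{\sige}\bv$, and use that on edges $\gamma^{\sige}\bv\in\Po^1(e)^d$ coincides with the nodal-interpolation reconstruction (the edge case of $\ID$ being exact on $\Po^1$), together with the constraint $\int_\sigma(\pi^\Ksig\bv)\cdot\p=\int_\sigma\bv\cdot\p$ for $\p\in(\Q_\Ksig)^d$ from \eqref{eq:def.VKsig}; in the no-bubble case $\Q_\Ksig=\Po^1(\sigma)$ handles all affine modes directly, while in the bubble case the missing constant mode is supplied by the edge/vertex matching since $\Pi^\Ksig$ depends only on vertex values and $\ov\bv_\Ksig$ reproduces the weighted vertex average.

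The main obstacle I anticipate is the bookkeeping in the bubble case: one must carefully track how the single scalar (vectorial) bubble unknown $\bv_\Ksig$ on the "+" side enters $\nabla^K$ in \eqref{eq:def.nablaK} but not $\nabla^\Ksig$, so that $\pi^K\bv$ restricted to a fracture face $\sigma\in\faces_{\Gamma,K}^+$ is \emph{not} equal to $\pi^\Ksig(\gamma^\Ksig\bv)$ — they differ by exactly the bubble contribution — and verify that the mean-value constraint in $\bV_h^\Ksig$ is posed against the \emph{complement} space $\Q_\Ksig$ precisely so that this discrepancy is consistent and does not over-determine the system. Checking that $\ID$ is well-defined (i.e.\ that $\Pi^\Ksig(\IvD\bv)$ uses only vertex data, so the definition of $(\ID\bv)_\Ksig$ is not circular) and that the two constraints in \eqref{eq:def.VhK} are compatible with the VEM cell space being finite-dimensional of the expected dimension is the other delicate point; everything else is routine integration by parts and linear algebra.
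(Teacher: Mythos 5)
Your handling of the normalisation conditions \eqref{proj_1_2} and \eqref{proj_2_2} is correct and matches the paper's (evaluate the affine reconstruction at the centroid using \eqref{eq:def.piK} and \eqref{v_x_bar_sigma}), and your integration by parts of the \emph{right}-hand side of \eqref{proj_1_1}, reducing $\int_K\nabla\bv:\nabla\q$ to $\sum_{\sigma\in\faces_K}(\nabla\q\,\n_\Ksig)\cdot\int_\sigma\gamma^{\Ksig}\bv$, is equivalent to the Stokes formula the paper starts from. The gap is in the next step: you reduce \eqref{proj_1_1} to the face-by-face identity $\int_\sigma\gamma^{\Ksig}(\pi^K\bv)=\int_\sigma\gamma^{\Ksig}\bv$ for every $\sigma\in\faces_K$. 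That identity is \emph{false} in general. The face averages of the affine function $\pi^K\bv$ are the values of one affine map at the $\#\faces_K$ face centroids, so (per component) they span an affine set of dimension at most $d+1$, whereas the face averages of $\bv$ equal $\ov{(\ID\bv)}_\Ksig$ (plus the bubble value on fracture faces) and carry $\#\faces_K$ independent pieces of data. Already for a pentagonal cell in $2$D with nodal data $(1,0,0,0,0)$, the edge averages of $\gamma\bv$ are $(\tfrac12,0,0,0,\tfrac12)$, and no affine function can take these values at the five edge midpoints (it would vanish at three non-collinear points, hence everywhere). What is true -- and is all that is needed, since $\nabla\q$ is constant -- is the aggregate identity $\int_K\nabla(\pi^K\bv)=\int_K\nabla\bv$. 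The clean route is \emph{not} to integrate the left-hand side by parts: $\pi^K\bv$ is affine with constant gradient $\nabla^K(\ID\bv)$, and \eqref{eq:def.nablaK} defines $\nabla^K(\ID\bv)$ as exactly $\frac1{|K|}\sum_{\sigma\in\faces_K}|\sigma|\,\ov{(\ID\bv)}_\Ksig\otimes\n_\Ksig$ plus the bubble terms; one then checks $\int_\sigma\gamma^{\Ksig}\bv=|\sigma|\,\ov{(\ID\bv)}_\Ksig$ for non-bubble faces (integral condition of \eqref{eq:def.VKsig} with constant test functions, together with $\int_\sigma\pi^{\Ksig}(\gamma^{\Ksig}\bv)=|\sigma|\,\ov{(\ID\bv)}_\Ksig$) and $\int_\sigma\gamma^{\Ksig}\bv=|\sigma|\big(\ov{(\ID\bv)}_\Ksig+(\ID\bv)_\Ksig\big)$ for bubble faces (formula \eqref{dof_bubble}), and sums against the normals. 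This is the paper's argument.

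Two of your supporting sub-claims are also incorrect and would block the proof as written. First, the assertion that $\ID$ is ``engineered so that $\frac1{|\sigma|}\int_\sigma\Pi^{\Ksig}(\ID\bv)=\frac1{|\sigma|}\int_\sigma\gamma^{\Ksig}\bv$'': since $\Pi^{\Ksig}$ uses only vertex data, one has $\frac1{|\sigma|}\int_\sigma\Pi^{\Ksig}(\ID\bv)=\frac1{|\sigma|}\int_\sigma\gamma^{\Ksig}\bv-(\ID\bv)_\Ksig$, so on fracture faces the two sides differ by exactly the bubble degree of freedom -- recording this defect is the whole purpose of the bubble. Second, the cell condition $\int_K(\pi^K\bv)\cdot\p=\int_K\bv\cdot\p$ does not ``pin down the remaining affine degrees of freedom'': it is not used anywhere in this lemma (the normalisation \eqref{proj_1_2} follows from the definition of $\Pi^K$ through the weights $\omega_s^K$), and invoking it does not repair the false face-by-face reduction. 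The same structural correction applies to your edge-by-edge reduction for \eqref{proj_2_1}: only the normal-weighted sum of edge integrals is needed, and it equals $|\sigma|\,\nabla^{\Ksig}(\ID\bv)$ by the trapezoid rule because edge traces lie in $\Po^1(e)^d$.
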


\begin{proof}
See Appendix \ref{App_proj}.
\end{proof}

As a consequence of \eqref{proj_p1_K} and the fact that $\bbsigma$ has constant coefficients on each cell, we have, for all $K\in\cells$ and $\bv\in\bV_h^K$,
\begin{equation}\label{eq:proj_sigma}
\int_{K} \bbsigma (\pi^K\bv): \bbeps (\q ) =\int_{K} \bbsigma (\bv): \bbeps (\q ) \qquad \forall \q \in \Po^1(K)^{d}.
\end{equation}
For $\bv \in \bV^K_h$, the VEM local degrees of freedom are the same as in the unknows of the fully discrete setting, namely the nodal value $\bv_{\Ks} = \bv(\x_s) = (\ID \bv)_\Ks$ at each node $s\in \nodes_K$ and the bubble value $\bv_\Ksig = (\ID \bv)_{\Ksig}$ for each $\sigma \in \faces_{\Gamma,K}^+$  (see Figure \ref{fig_bul}).  Note that the bubble unknown can also be expressed using the local projector $\pi^{\Ksig}$ as follows:
\begin{equation}\label{dof_bubble}
\bv_\Ksig = (\ID\bv)_{\Ksig} = \frac{1}{|\sigma|} \int_{\sigma} \( \gamma^{\Ksig}{\bv} - \pi^{\Ksig}({\gamma^{\Ksig}\bv} )\).	
\end{equation}

\begin{figure}[H]
	\begin{center}
		\includegraphics[width=10cm]{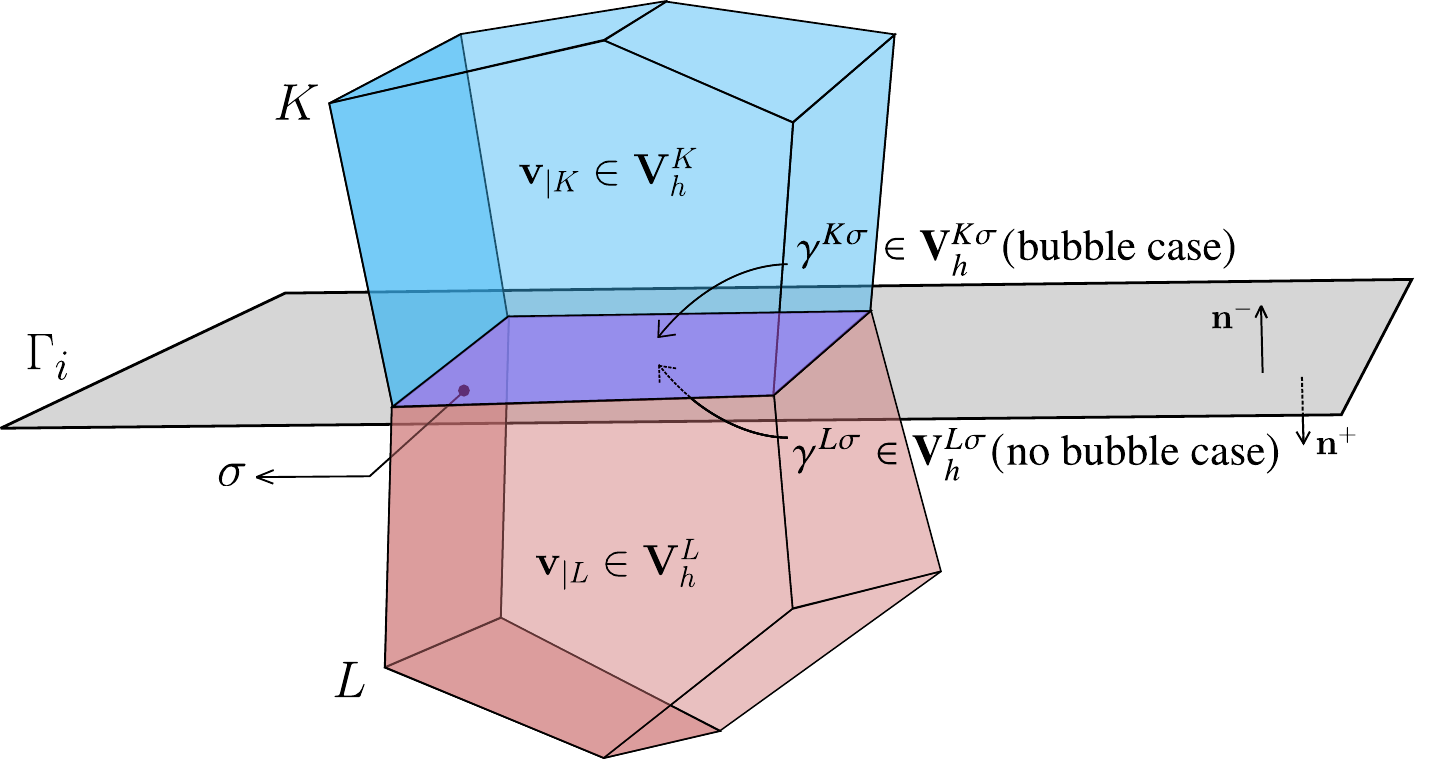}
		\caption{$\Po^1$-bubble VEM}\label{fig_bul}
	\end{center}
\end{figure}
The global VEM space for the displacement field is obtained as usual by patching together the local VEM spaces in a conforming way in $H^1( \Omega\setminus \ov{\Gamma})^d$. It is defined by
\begin{equation*}
	\begin{split}
		\bV_{h} = 
		\Big\{ \bv \in  H^1( \Omega\setminus \ov{\Gamma})^d \cap\,  \mathcal{C}^0( \Omega\setminus \ov{\Gamma})^d |~  \bv_{|K} \in \bV_h^{K},~\forall K \in \cells  \Big\},
	\end{split} 
\end{equation*}
and we denote by $\bV_h^0$ its subspace  with vanishing values on the boundary $\partial \Omega$. 
Note that the vector of all degrees of freedom of $\bv \in \bV_h$ is precisely $\ID \bv \in \UD$. We define $\pi^h$ as the global projection operator onto the broken polynomial space $\mathbb{P}^1(\mathcal{M})^d$, such that, for all $\bv \in \mathbf{V}_h$, $(\pi^h \bv)_{|K} = \pi^K ({\bv}_{|K})$. The diagram \eqref{eq:commut} illustrates the fundamental relation between $\pi^h$ and $\Pi_{\D}$.
\begin{equation}\label{eq:commut}
\begin{tikzcd}[column sep=huge, row sep=huge]
	\mathbf{V}_{h}^0 \arrow[r,"\mathcal{I}_{\D}"] \arrow[rd,"\pi^h"] & \mathbf{U}_{\D}^0 \arrow[d,"\Pi_{\D}"] \\
	{} & \mathbb{P}^{1}(\cells)^d
\end{tikzcd}
\end{equation}
The mixed variational formulation for the contact-mechanical problem in the $\Po^1$-bubble VEM framework is defined by: find $( \bu^n, \l_{\D}^n )_{n=1,\ldots,N} \in (\bV_{h}^0 \times \CD( \lambda_{\D,\n}^n ))^N$  such that, for all $n=1,\ldots,N$ and all $( \bv, \m_{\D} ) \in \bV_{h}^0 \times \CD(\lambda_{\D,\n}^n)$,
\begin{subequations}\label{mixed_vem}
	\begin{align}
		& \dsp \sum_{K\in\cells}\int_{K} \bbsigma (\pi^K \bu^n): \bbeps(\pi^K\bv )  + S_{\mu,\lambda,\D}( \ID\bu^n, \ID\bv ) -  \sum_{K\in\cells}\int_{K} b  \Pi_{\D_m} p_{\D_m}^n  \div ( \pi^K \bv )  \nonumber \\[1ex]
		& \qquad+  \sum_{\sigma\in\faces_{\Gamma}}\int_{\sigma}  \Pi_{\D_f} p_{\D_f}^n  \jump{\bv }_{\n}   + \sum_{\sigma\in\faces_{\Gamma}}\int_{\sigma} \l_{\D}^n \cdot \jump{\bv } = \sum_{K \in \cells}  \int_{K}  \mathbf{f}^n_K \cdot \pi^K \bv, \\[2ex]
		& \sum_{\sigma\in\faces_{\Gamma}}\int_{\sigma} \(( \mu_{\D,\n} - \lambda_{\D,\n}^n ) \jump{\bu^n}_{\n} + ( \m_{\D,\tang} - \l_{\D,\tang}^n ) \cdot\jump{\delta_t^n \bu }_{\tang} \) \le 0.
	\end{align}
\end{subequations}
It is straightforward to observe that the variational formulation \eqref{mixed_vem} is equivalent to \eqref{mixed_discrete} based on the correspondence $\bu_\D = \ID \bu$ and $\bv_\D = \ID \bv$. 

We note that the term $\int_{\sigma} \jump{\bu }\d\sigma$ is computable from the degrees of freedom since, with $\sigma=K|L$,
$$
\int_{\sigma} \jump{\bu }\d\sigma= \int_\sigma (\gamma^\Ksig\bu-\gamma^\Lsig\bu)= \int_{\sigma} \( \pi^{\Ksig} (\gamma^{\Ksig} \bu ) - \pi^{\Lsig} (\gamma^{\Lsig} \bu ) \) +|\sigma| (\ID \bu )_{\Ksig},
$$
where we have used \eqref{dof_bubble} to express $\int_\sigma \gamma^\Ksig\bu$, and the condition in \eqref{eq:def.VKsig} for $\bV_h^\Lsig$ with $\p$ constant (which is valid since $L$ is not on the bubble side of $\sigma$) to write $\int_\sigma \gamma^\Lsig \bu = \int_\sigma \pi^\Lsig(\gamma^\Lsig\bu)$. 
 
The stabilisation term $S_{\mu,\lambda,\D}( \ID\bu^n, \ID\bv )$ matches the classical VEM "$\textit{dofi}$-$\textit{dofi}$" approach \cite{da2015virtual} based on the degrees of freedom (see Appendix \ref{App_stab} for a detailed proof).   
The consistency of the local bilinear form 
\begin{equation*}
a_{\D}^K( \bu,\bv) =  \dsp \int_{K} \bbsigma (\pi^K \bu): \bbeps(\pi^K\bv )  + (2 \mu_K+\lambda_K)S_K(\bu_\D,\bv_D)
\end{equation*}
derives from \eqref{eq:proj_sigma} and the fact that $\pi^K \mathbf{w} = \mathbf{w}$ for all $\mathbf{w} \in \Po^1( K)^d$.

\subsubsection{Unisolvence of the degrees of freedom}

\begin{proposition}
For all $K\in\cells$, the local degrees of freedom associated to $K$ are unisolvent for
$\bV_h^K$. As a consequence, the degrees of freedom in $\UD$ are unisolvent for $\bV_h$.
\end{proposition}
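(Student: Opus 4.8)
The plan is to prove unisolvence locally on each cell $K$, and then deduce the global statement by the usual conformity argument. On a cell $K$, the degrees of freedom are the nodal values $\bv_\Ksig$ at $s\in\nodes_K$ together with the bubble values $\bv_\Ksig=(\ID\bv)_\Ksig$ for $\sigma\in\faces_{\Gamma,K}^+$. Since the dimension count matches (each component of the displacement being treated independently, it suffices to argue componentwise, so I may as well take $d=1$ in the proof), unisolvence is equivalent to showing that a function $\bv\in\bV_h^K$ whose nodal values and bubble values all vanish must be identically zero. First I would run this argument on each face: take $\bv\in\bV_h^\Ksig$ with vanishing nodal values at $\nodes_\sigma$. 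On each edge $e\in\edges_\sigma$, $\gamma^\sige\bv\in\Po^1(e)^d$ vanishes at both endpoints, hence is zero, so $\bv$ vanishes on $\partial\sigma$. Then $\bv$ solves $\Delta_\tang\bv=\p\in\Po^1(\sigma)^d$ in $\sigma$ with zero boundary trace. By Lemma \ref{lem:reconstruction.projection} (applied on the face), $\pi^\Ksig\bv$ is the tangential elliptic projector of $\bv$; integrating by parts and using $\pi^\Ksig\bv\in\Po^1(\sigma)^d$,
$$
\int_\sigma |\nabla_\tang\bv|^2 = -\int_\sigma \p\cdot\bv = -\int_\sigma\p\cdot(\bv-\pi^\Ksig\bv) - \int_\sigma\p\cdot\pi^\Ksig\bv.
$$
The second term is controlled by \eqref{proj_2_1}, and the first is where the constraint in \eqref{eq:def.VKsig} enters: in the no-bubble case $\Q_\Ksig=\Po^1(\sigma)$ forces $\int_\sigma\p\cdot(\bv-\pi^\Ksig\bv)=0$ directly; in the bubble case the complementary-constants condition plus the vanishing of the bubble dof \eqref{dof_bubble} (which says exactly $\int_\sigma(\bv-\pi^\Ksig\bv)=0$ when $\bv_\Ksig=0$) kills the constant part of $\p$, and the $\Q_\Ksig$ condition kills the rest. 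Combined with \eqref{proj_2_2} giving $(\pi^\Ksig\bv)(\ov\x_\sigma)=\sum_s\omega_s^\sigma\bv(\x_s)=0$, one gets $\nabla_\tang\bv=0$ and then $\bv=0$ on $\sigma$.

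With faces handled, I would do the analogous computation on $K$ itself. If all nodal and bubble dofs of $\bv\in\bV_h^K$ vanish, then by the face result every $\gamma^\Ksig\bv$ is zero, so $\bv$ has zero trace on $\partial K$; moreover $\Delta\bv=\p\in\Po^1(K)^d$. Writing
$$
\int_K|\nabla\bv|^2 = -\int_K\p\cdot\bv = -\int_K\p\cdot(\bv-\pi^K\bv) - \int_K\p\cdot\pi^K\bv,
$$
the last term vanishes by \eqref{proj_1_1}, and the middle term vanishes because the defining constraint $\int_K(\pi^K\bv)\cdot\q=\int_K\bv\cdot\q$ for all $\q\in\Po^1(K)^d$ in \eqref{eq:def.VhK} gives $\int_K\p\cdot(\bv-\pi^K\bv)=0$. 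Hence $\nabla\bv=0$; together with $(\pi^K\bv)(\ov\x_K)=\sum_s\omega_s^K\bv(\x_s)=0$ from \eqref{proj_1_2}, this yields $\bv\equiv0$ on $K$. A short dimension-count remark — that the number of listed dofs equals $\dim\bV_h^K$ — turns injectivity of the dof map into unisolvence; this can be read off by decomposing $\bV_h^K$ according to the trace on $\partial K$ (parametrised by the face spaces $\bV_h^\Ksig$, themselves governed by edgewise-$\Po^1$ traces, i.e.\ nodal values, plus the bubble correction) and the interior datum $\Delta\bv\in\Po^1(K)^d$, the latter being pinned down by the $\Po^1$-orthogonality constraint.

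For the global statement, once $\ID$ is a bijection from $\bV_h$ onto the space of dof-vectors compatible across interfaces — which is the content of local unisolvence plus the fact, already noted after \eqref{eq:commut}, that the dof-vector of $\bv\in\bV_h$ is exactly $\ID\bv$ — I would argue that a function $\bv\in\bV_h$ with $\ID\bv=0$ restricts on each $K$ to an element of $\bV_h^K$ with zero local dofs, hence is zero on each $K$ by the local result, hence is zero; and conversely any $\bu_\D\in\UD$ (respecting the single-valuedness of nodal unknowns on each side of $\Gamma$, which is built into the definition of $\UD$) determines a unique $\bv\in\bV_h$ with $\ID\bv=\bu_\D$ by solving the local problems and checking that the $\mathcal{C}^0(\Omega\setminus\ov\Gamma)$ gluing is automatic from matching nodal and edge traces. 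The main obstacle is the bubble case of the face argument: one must verify carefully that the constraint "$\Q_\Ksig$ is \emph{a} complementary space of constants in $\Po^1(\sigma)$" together with the bubble degree of freedom \eqref{dof_bubble} removes exactly the one-dimensional ambiguity (the constant functions on $\sigma$) that the reduced moment condition leaves open — i.e.\ that $\int_\sigma\p\cdot(\bv-\pi^\Ksig\bv)=0$ still holds for \emph{all} $\p\in\Po^1(\sigma)^d$, not just $\p\in\Q_\Ksig^d$ — and that this is consistent with the dimension bookkeeping so that adding the bubble dof compensates exactly for shrinking the moment space; everything else is a routine Poincaré/integration-by-parts argument. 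I would therefore devote the bulk of the write-up (deferred to Appendix \ref{appendice_VEM}) to that point, treating the $K$-interior argument and the global patching as direct analogues.
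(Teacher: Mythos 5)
Your proof is correct and takes essentially the same route as the paper's: vanishing nodal and bubble degrees of freedom yield the full set of $\Po^1(\sigma)^d$ moment conditions on each face (the bubble dof supplying the constant moment missing from $\Q_\Ksig$), testing against $\Delta_\tang\gamma^\Ksig\bv$ then gives $\gamma^\Ksig\bv=0$, the analogous (simpler) argument works in $K$, and the dimension count turns injectivity of the dof map into unisolvence. The only nitpick is that the vanishing of $\int_\sigma\p\cdot\pi^\Ksig\bv$ and $\int_K\p\cdot\pi^K\bv$ follows most directly from $\pi^\Ksig\bv=\Pi^\Ksig(\ID\bv)=0$ and $\pi^K\bv=0$ (these reconstructions depend only on the nodal dofs, which vanish), not from \eqref{proj_2_1} and \eqref{proj_1_1} as you state.
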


\begin{proof}
	Let us consider a mesh element $K$ that includes at least a fracture face (the case without any fracture face being done similarly, and in a simpler way). Let us show that the local interpolation operator ${\ID}_{|{\bV_h^K}}:\bV_h^K \to \UD$ that extracts the degrees of freedom from a given function of $\bV_h^K$ is injective. We therefore need to prove that any function $\bv \in \bV_h^K$ satisfying		
\begin{subequations}
	\begin{align}
		&( \ID \bv )_{\Ks} = 0, \quad \forall s \in \nodes_K, \label{eq:deg_zero_1} \\
		&( \ID \bv )_{\Ksig} = 0,\quad \forall \sigma \in \faces_{\Gamma,K}^+, \label{eq:deg_zero_2}
	\end{align}
\end{subequations}	
vanishes in $K$. From \eqref{eq:deg_zero_1}, we get that $\gamma^{\Ksig} \bv = 0$ on $\partial \sigma$, for all $\sigma \in \faces_K$. In order to show that $\gamma^{\Ksig}\bv = 0$ on $\sigma$, it suffices to show that 	
	\begin{equation}\label{douze}
		\int_{\sigma} ( \gamma^{\Ksig}\bv ) \cdot \mathbf{p} = 0,\quad \forall \mathbf{p}  \in \Po^1(\sigma)^d.
	\end{equation}	
	Indeed, suppose that \eqref{douze} is satisfied, then one can write	
	\begin{equation*}
		0 \underbrace{=}_{\Delta_{\tang} \gamma^{\Ksig}\bv \in \Po^1( \sigma)^d}	\int_{\sigma} ( \gamma^{\Ksig} \bv ) \cdot \Delta_{\tang} \gamma^{\Ksig} \bv  \underbrace{=}_{ \gamma^{\Ksig}\bv \,=\, 0 ~ \text{on}~ \partial \sigma}	\int_{\sigma}  \grad_{\tang} \gamma^{\Ksig}\bv \cdot \grad_{\tang} \gamma^{\Ksig} \bv ,
	\end{equation*}	
	which directly implies $ \gamma^{\Ksig} \bv = 0$ on $\sigma$ since $\gamma^{\Ksig} \bv = 0$ on $\partial \sigma$. By the integral condition in \eqref{eq:def.VKsig}, we have, for all $\mathbf{p}\in(\Q_\Ksig)^d$,
	 \begin{equation}\label{eq:int.Qsig}
       \int_{\sigma} (\gamma^{\Ksig} \bv ) \cdot \mathbf{p}  = \int_{\sigma} \pi^{\Ksig} (\gamma^{\Ksig}\bv )\cdot \mathbf{p} = 0,
   \end{equation}
   where the conclusion follows from \eqref{eq:deg_zero_1} which implies $\pi^{\Ksig} (\gamma^{\Ksig}\bv )=0$. If $\sigma\not\in \faces_{\Gamma,K}^+$ then $\Q_\Ksig=\Po^1(\sigma)$ and \eqref{douze} follows. Otherwise, using \eqref{eq:deg_zero_2}, \eqref{dof_bubble} and $\pi^{\Ksig} (\gamma^{\Ksig}\bv )=0$ we have $\int_{\sigma}  \gamma^{\Ksig}{\bv}=0$. Combined with \eqref{eq:int.Qsig} which is valid for any $\mathbf{p}$ in a complement space of $\Po^0(\sigma)^d$, this proves that \eqref{douze} also holds. 
 It results that $\bv = 0$ on $\partial K$. 
 
 We then repeat a similar (but simpler, since the integral condition in \eqref{eq:def.VhK} is already expressed against test functions in $\Po^1(K)^d$) procedure on $K$, to finally obtain $\bv = 0$ on $K$. Therefore,  ${\ID}_{|{\bV_h^K}}$ is injective.
        Proceeding as in \cite{ahmad2013equivalent}, it is easy to show that $\text{dim}( \bV_h^K) \ge d \cdot ( \#\nodes_K + \#\faces_{\Gamma,K}^+ )$
which implies that $\ID$ defines a bijection from $\bV_h^K$ to the vector space $(\R^d)^{\#\nodes_K + \#\faces_{\Gamma,K}^+}$ of degrees of freedom of the cell $K$. Consequently, the degrees of freedom in $K$ are unisolvent for $\bV_h^K$. 
\end{proof}

\section{Numerical experiments}
\label{num.experiments}

We assess here the numerical convergence of the discretisation of the poromechanical model with frictional contact at matrix--fracture interfaces defined by \eqref{GD_DarcyFlow}--\eqref{eq:meca.var}--\eqref{eq:semismoothnewton}--\eqref{discrete_coupling_law}. 
Section \ref{test_meca_standalone} investigates the discretisation of the contact-mechanics on the stand-alone static contact-mechanical model. Then, in Section \ref{test_poromeca}, the discretisation of the fully coupled poromechanical model is considered. 

In the following test cases, the Lam\'e coefficients can be defined from the Young modulus $E$ and the Poisson coefficient $\nu$
by $\mu = \frac{E}{2(1+\nu)}$ and $\lambda = \frac{\nu E}{(1+\nu)(1-2\nu)}$.
The 2D test cases are performed with the 3D code using meshes obtained by extrusion in the $z$ direction of the 2D meshes of the $(x,y)$ domain, with one layer of cells of thickness $1$. The $z$ components of the displacement field and of the Lagrange multiplier are set to zero and homogeneous Neumann boundary conditions are imposed at $z=0$ and $z=1$. The resulting discretisation is equivalent to the 2D version of the scheme.
Note that the discrete fracture networks of Sections \ref{bergen_meca}, \ref{bergen_coup} and \ref{cubic_coup} are chosen to include difficulties representative of the geological complexity, such as fractures with corners, fractures intersecting the boundary, or fractures intersecting each other. 

\subsection{Stand alone static contact-mechanics}\label{test_meca_standalone}

The numerical convergence of the mixed $\Po^1$-bubble VEM--$\Po^0$ discretisation \eqref{mixed_discrete}--\eqref{eq:semismoothnewton} is investigated on three static contact-mechanical test cases obtained from \eqref{eq_edp_contact_meca}--\eqref{eq_edp_stress}--\eqref{eq_def_T}  by setting the matrix $p_m$ and fracture $p_f$ pressures to zero and replacing $\jump{\partial_t \bu}_\tang$ by $\jump{\bu}_\tang$ in the contact term.
The first one considers a manufactured 3D analytical solution with a single non-immersed fracture and a frictionless contact model. The second test case is based on an analytical solution for a single fracture in contact slip state immersed in an unbounded 2D domain. The last test case compares our discretisation to a Nitsche $\Po^1$ Finite Element Method (FEM) on a 2D domain with 6 fractures. In all simulations, the contact-mechanical model is solved using the semi-smooth Newton method based on the contact equations \eqref{eq:semismoothnewton}. It is combined with a direct sparse linear solver.

\subsubsection{3D manufactured solution for a frictionless static contact-mechanical model}\label{analytical_test}

We consider the 3D domain $\Omega = (-1,1)^3$ with the single non-immersed fracture $\Gamma =  \{0\} \times (-1,1)^2$. The friction coefficient $F$ is set to zero, which corresponds to a frictionless contact, and the Lam\'e coefficients are $\mu=\lambda = 1$. The exact solution 
\begin{equation*}
	\bu(x,y,z)= 
	\left\{\hspace{.2cm}
	\begin{array}{lll}
		\left( {\begin{array}{c}
				g(x,y) p(z)\\ 
				p(z) \\   
				x^2 p(z) \\ 
		\end{array} } \right)  & \mbox{ if } z \ge 0,\\[1ex]
		\left( {\begin{array}{c} 	
				h(x) p^+(z)\\    	
				h(x) \left(p^+(z)\right)' \\   
				-\int_{0}^{x} h(\xi) \d \xi  \left(p^+(z)\right)'\\ 
		\end{array} } \right)  & \mbox{ if } z < 0,~x\ < 0,  \\[1ex]
		\left( {\begin{array}{c} 	
				h(x) p^-(z)\\    	
				h(x) \left(p^-(z)\right)' \\   
				-\int_{0}^{x} h(\xi) \d \xi  \left(p^-(z)\right)'\\ 
		\end{array} } \right)  & \mbox{ if } z < 0,~x\ \ge 0,
	\end{array}
	\right.
\end{equation*}
with $g(x,y) = -\sin(\frac{\pi x}{2}) \cos(\frac{\pi y}{2})$, $p(z) = z^2 $, $h(x) = \cos(\frac{\pi x}{2}) $, $p^+(z) = z^4 $ and $p^-(z) = 2z^4$, is designed to satisfy the frictionless contact conditions at the matrix--fracture interface $\Gamma$. The right hand side
$\mathbf{f} = -\div \bbsig(\bu)$ and the Dirichlet boundary conditions on $\partial\Omega$ are deduced from $\bu$.
Note that the fracture $\Gamma$ is in contact state for $z  > 0$ ($\jump{\bu}_\n = 0$) and open for $z < 0$, with a normal jump $\jump{\bu}_\n = -\min(z,0)^4$ depending only on $z$. The convergence of the mixed $\Po^1$-bubble VEM--$\Po^0$ formulation is investigated on families of uniform Cartesian, tetrahedral and hexahedral meshes. Starting from uniform Cartesian meshes, the hexahedral meshes are generated by random perturbations of the nodes and by cutting non-planar faces into two triangles (see Figure \ref{rand_mesh}). 
\begin{figure}[H]
	\begin{center}
       \includegraphics[height=8em]{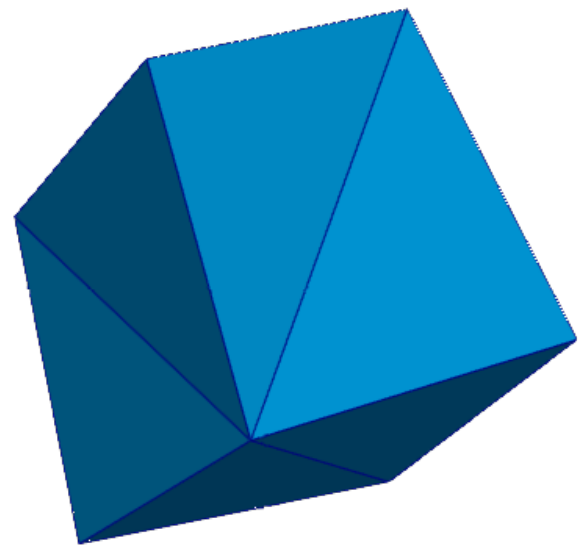}
		\caption{Example of randomly perturbated Cartesian cell with non planar faces cut into two triangles.}
		\label{rand_mesh}
	\end{center}
\end{figure} 
Figure \ref{Error_anal} exhibits the relative $L^2$ norms of the errors $\bu- \Pi_\D \bu_\D$, $\jump{\bu} - \jump{\bu_\D}_\D$, $\grad{\bu}- \nabla_\D \bu_\D$ and $\lambda_\n - \lambda_{\D,\n}$ on the three family of refined meshes as functions of the cubic root of the number of cells.
It shows, as expected for such a smooth solution, a second-order convergence for $\bu$ and $\jump{\bu}$ for all families of meshes. A first-order convergence is obtained for $\grad{\bu}$ and $\lambda_\n$ with both the hexahedral and tetrahedral families of meshes, while a second order super convergence is observed with the family of Cartesian meshes.
\begin{figure}[H]
	\centering
	\begin{tikzpicture}
				\node (img)  {\includegraphics[scale=0.48]{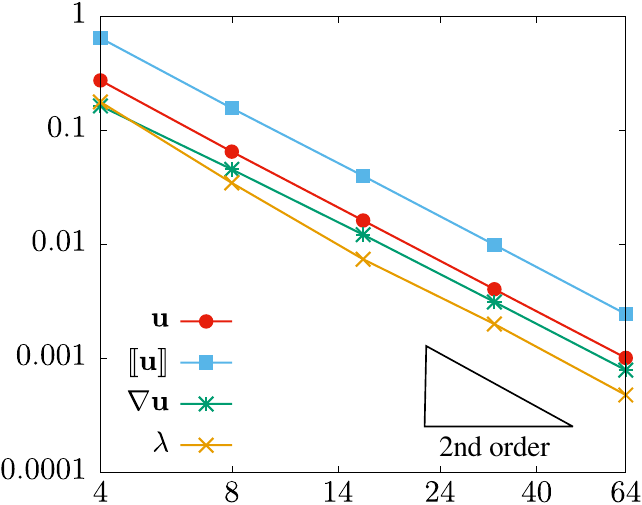}};
				\node[left=of img, node distance=0cm, rotate=90, anchor=center,yshift=-0.7cm] {\footnotesize{ $L^2$ Error }};
				\node[below=of img, node distance=0cm, yshift=1cm]  {~~~~~~~\footnotesize{ $N_{\text{cell}}^{\frac{1}{3}}$ }};
				\node[above=of img, node distance=0cm, rotate=0, anchor=center,yshift=-0.7cm]   {\footnotesize{ ~~~~(a)}};
   	\end{tikzpicture}
		\begin{tikzpicture}
			\node (img)  {\includegraphics[scale=0.48]{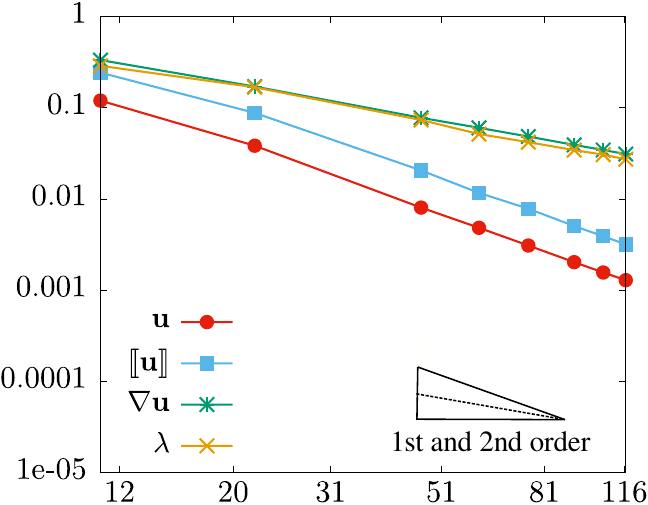}};
			\node[below=of img, node distance=0cm, yshift=1cm]  {~~~~~~~\footnotesize{ $N_{\text{cell}}^{\frac{1}{3}}$ }};
			\node[above=of img, node distance=0cm, rotate=0, anchor=center,yshift=-0.7cm]   {\footnotesize{ ~~(b)}};
	\end{tikzpicture}
	\begin{tikzpicture}
		\node (img)  {\includegraphics[scale=0.48]{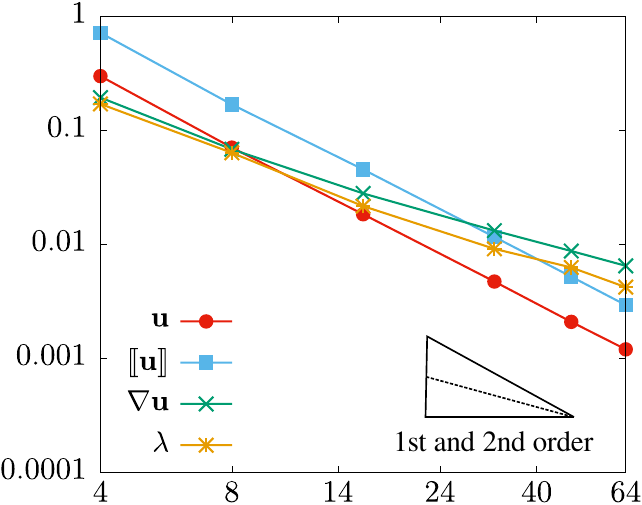}};
		\node[below=of img, node distance=0cm, yshift=1cm]  {~~~~~~~\footnotesize{ $N_{\text{cell}}^{\frac{1}{3}}$ }};
		\node[above=of img, node distance=0cm, rotate=0, anchor=center,yshift=-0.7cm]   {\footnotesize{ ~~~~~(c)}};
    \end{tikzpicture}
	\caption{Relative $L^2$ norms of the errors $\bu- \Pi_\D \bu_\D$, $\jump{\bu} - \jump{\bu_\D}_\D$, $\grad{\bu}- \nabla_\D \bu_\D$ and
$\lambda_\n - \lambda_{\D,\n}$ as functions of the cubic root of the number of cells, using the families of Cartesian (a), tetrahedral (b) and hexahedral (c) meshes. Test case of Section \ref{analytical_test}.}
	\label{Error_anal}
\end{figure}
Figure \ref{test_analytical_saut_n} plots, for the hexahedral meshes, the face-wise constant normal jump $\jump{\bu_\D}_{\D,\n}$ on $\Gamma$ and the nodal normal jumps as functions of $z$ along the ``broken'' line corresponding, before perturbation of the mesh, to $x=y=0$. We recall that the continuous normal jump depends only on $z$. 
\begin{figure}[H]
	\centering
	\hspace{-2.5cm}
	\begin{tikzpicture}
		\raisebox{0.38cm}{\node (img)  {\includegraphics[scale=0.25,trim=0em 0em 20em 0em]{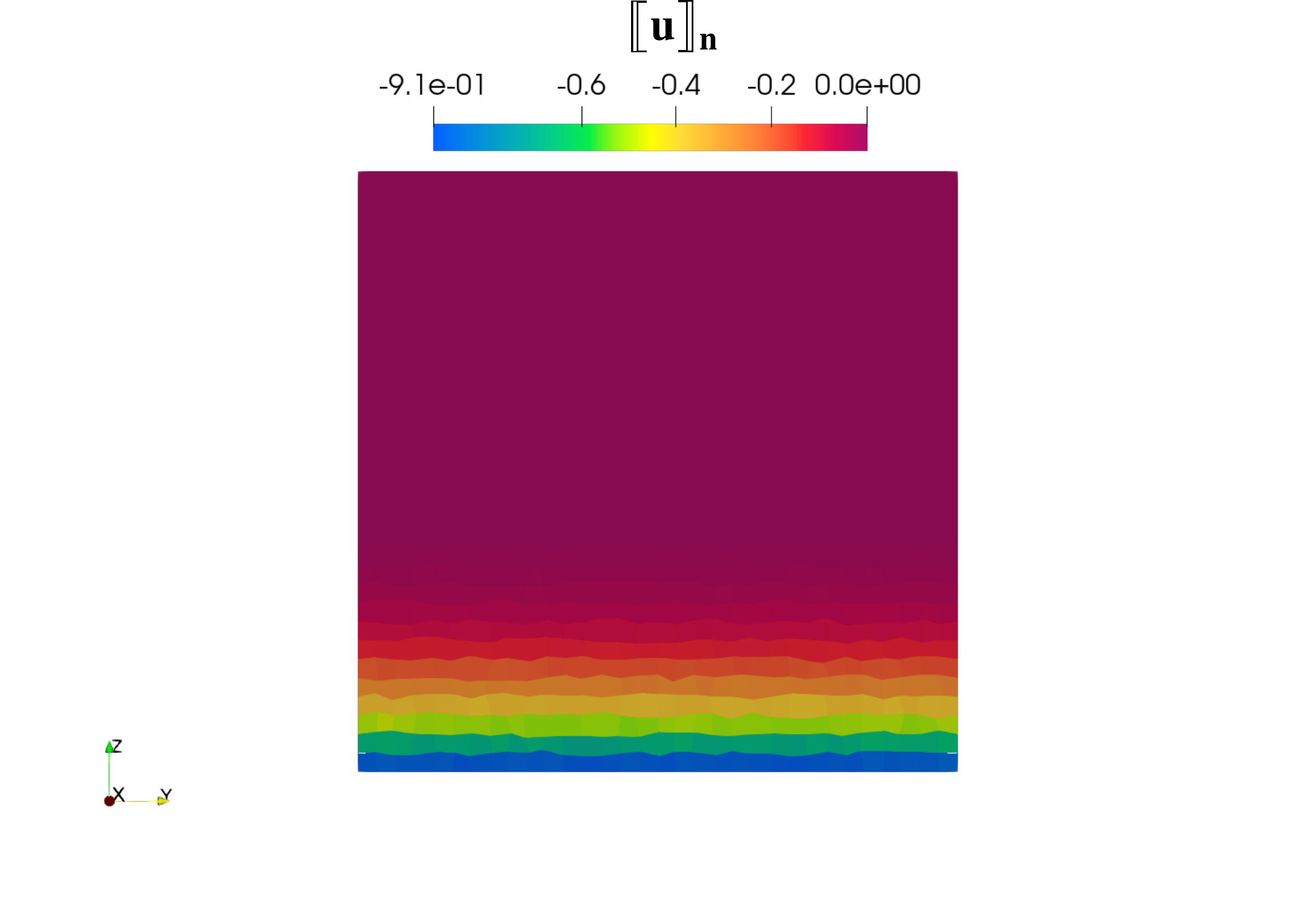}};
     \node[below=of img, node distance=0cm, rotate=0, anchor=center,yshift=1.5cm] {~{ (a)}};}
	\end{tikzpicture}
	\begin{tikzpicture}
	\node (img)  {\includegraphics[scale=0.55]{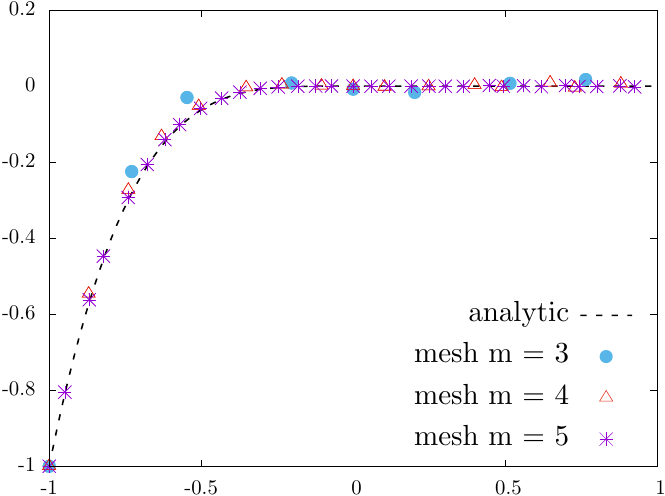}};
	\node[left=of img, node distance=0cm, rotate=90, anchor=center,yshift=-0.7cm] {\footnotesize{ $\jump{\bu }_{\mathbf{n}}$ (m)}};
	\node[below=of img, node distance=0cm, rotate=0, anchor=center,yshift=0.8cm] {~~~~\footnotesize{ $z$ (m)}};
	\node[below=of img, node distance=0cm, rotate=0, anchor=center,yshift=0.1cm] {~~~~~~{ (b)}};
	\end{tikzpicture}
	\caption{(a) Face-wise constant normal jump $\jump{\bu_\D}_{\D,\n}$ on $\Gamma$ obtained on the hexahedral mesh with $2^{3m}$ cells, $m=5$. (b) Nodal normal jumps along the line $x=y=0$ as functions of $z$ both for the discrete solutions on the hexahedral meshes with $2^{3m}$ cells, $m=3,4,5$ and for the continuous solution depending only on $z$. Test case of Section \ref{analytical_test}.}
	\label{test_analytical_saut_n}
\end{figure}

\subsubsection{Unbounded 2D domain with a single fracture under compression}
\label{tchelepi_meca}
This test case presented in~\cite{contact-BEM,tchelepi-castelletto-2020,GKT16} consists of a 2D unbounded domain containing a single fracture and subject to a compressive remote stress $\ov\sigma$ = 100 MPa. The fracture inclination with respect to the $x$-direction is $\psi=\pi/9$, its length is $2 \ell=2$ m, and the friction coefficient is $F=1/\sqrt{3}$. Young's modulus and Poisson's ratio are set to $E=25$ GPa and $\nu=0.25$. The analytical solution is such that: 
\begin{equation}\label{sol.compression}
	\lambda_{\n}  = \ov\sigma \sin^2(\psi),\quad | {\jump{\bu } }_{\tang}| = \frac{4(1-\nu)}{E}  \ov\sigma \sin (\psi) \left( \cos(\psi) - F\sin(\psi)\right)\sqrt{\ell^2 - (\ell^2 - \tau^2)},
\end{equation}
where $0\le \tau\le2\ell$ is the curvilinear abscissa along the fracture. Since $\lambda_{\n}>0$, we have $\jump{\bu}_{\n}=0$ on the fracture. For this simulation, we sample a large square domain $(-{L\over 2}, {L\over 2})^2$ with $L= 160 \,\rm m$. 
The top and bottom boundaries are free, while a compression $\bbsigma(\bu)\n = - \ov\sigma \n$ is imposed at the left and right boundaries. Moreover, to get rid of the rigid body motions while preserving the symmetry of the expected solution and compression boundary condition, homogeneous Dirichlet boundary conditions are imposed on $u_x$ at $\x = (0,\pm {L\over 2})$ and on $u_y$ at $\x = (\pm {L\over 2},0)$ as shown in Figure~\ref{test_compression}. 
An initial triangular mesh of the domain is created, with a local refinement in a neighborhood of the fracture; this mesh is then uniformly refined to give rise to meshes containing 100, 200, 400, and 800 faces on the fracture (corresponding, respectively, to 12\,468, 49\,872, 199\,488, and 797\,952 triangular elements).

Figure~\ref{compression_comparison} shows the comparison between the analytical and numerical Lagrange multipliers $\lambda_{\n}$  and tangential displacement jump $\jump{\bu }_{\tang}$, computed on the finest mesh with either one-sided or two-sided bubbles along the fracture (see Remark \ref{rem:twobubbles}). The Lagrange multiplier $\lambda_{\n}$ presents some oscillations in a neighborhood of the fracture tips. As already explained in~\cite{tchelepi-castelletto-2020}, this is due to the sliding of faces close to the fracture tips (in this test case, all fracture faces are in a contact-slip state). The strong singularity of the solution at such points, together with the weak control of the Lagrange multiplier in $H^{-1/2}(\Gamma)$ norm induced by the inf-sup condition, can also explain such oscillatory behaviour of the solution. As could be expected, the two-sided bubble case significantly reduces the Lagrange multiplier oscillations compared with the one-sided bubble case, due to a better stabilisation (but at the cost of more degrees of freedom). In both cases, the discrete tangential displacement jump cannot be distinguished from the analytical solution on this fine mesh. 
Figure \ref{compression_error} and Table \ref{tab:Table_11} display, for the one-sided bubble case, the convergences of the tangential displacement jump and of the normal Lagrange multiplier with respect to the size of the largest fracture face denoted by $h$. Note that the $L^2$ error for the Lagrange multiplier is computed $5\%$ away from each tip to circumvent the lack of convergence induced by the oscillations as in \cite{tchelepi-castelletto-2020}. A first-order convergence for the displacement jump and a $1.5$ convergence order for the Lagrange multiplier are observed. The former (low) rate is related to the low regularity of $\jump{\bu}_\tang$ close to the tips (cf.~the analytical expression~\eqref{sol.compression}); the latter (higher than expected) rate is likely related to the fact that $\lambda_{\n}$ is constant. 
Table \ref{tab:Table_11} also shows the robust convergence  of the semi-smooth Newton algorithm on the family of refined meshes. 
\begin{figure}[H]
	\centering
	
	\subfloat[]{
		\raisebox{.45cm}{
			\includegraphics[keepaspectratio=true,scale=.35]{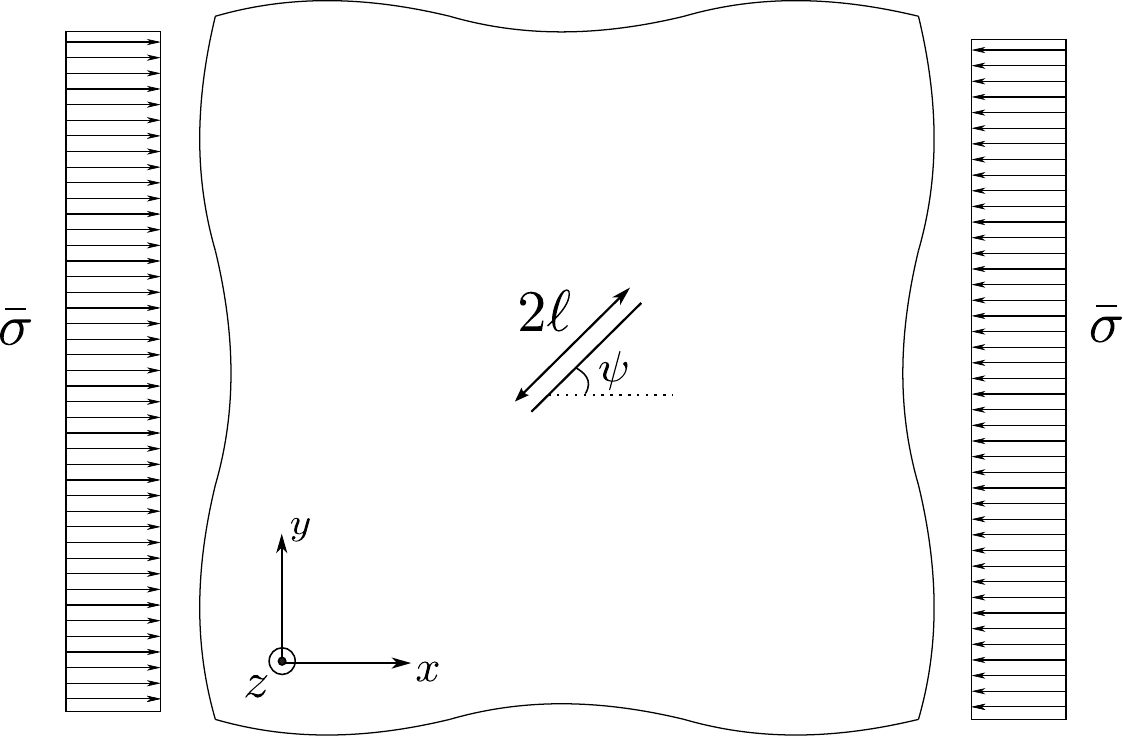}}
	}
	\hspace{2cm}
	\subfloat[]{\includegraphics[keepaspectratio=true,scale=.1]{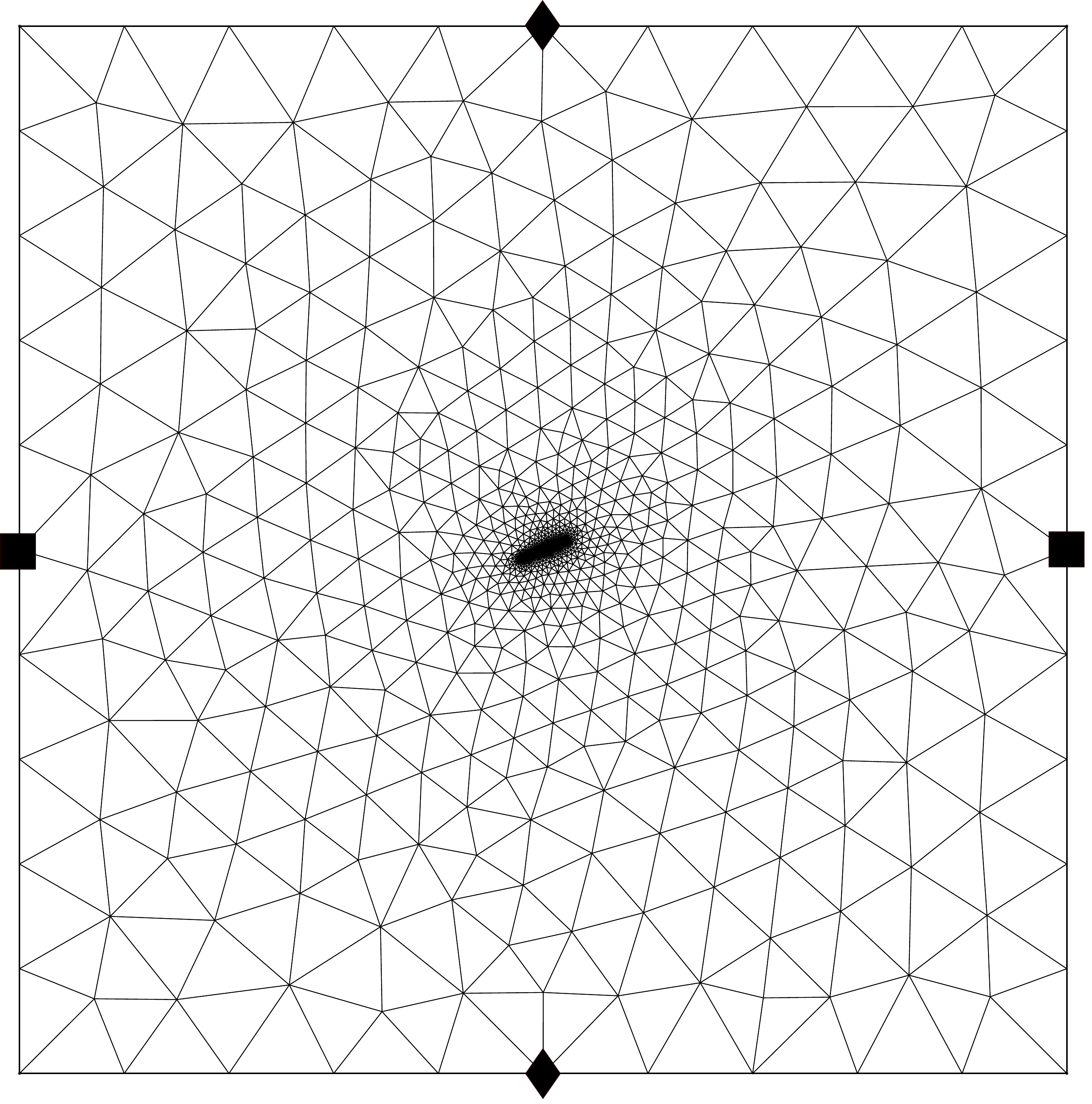}}\\
	\caption{Unbounded domain containing a single fracture under uniform compression (a) and mesh including nodes for boundary conditions ($\blacklozenge$: $u_x = 0$, $\blacksquare$: $u_y=0$) (b), for the example of Section~\ref{tchelepi_meca}.}
	\label{test_compression}
\end{figure}
\begin{figure}[H]
	\centering
	\begin{tikzpicture}
		\node (img)  {\includegraphics[scale=0.55]{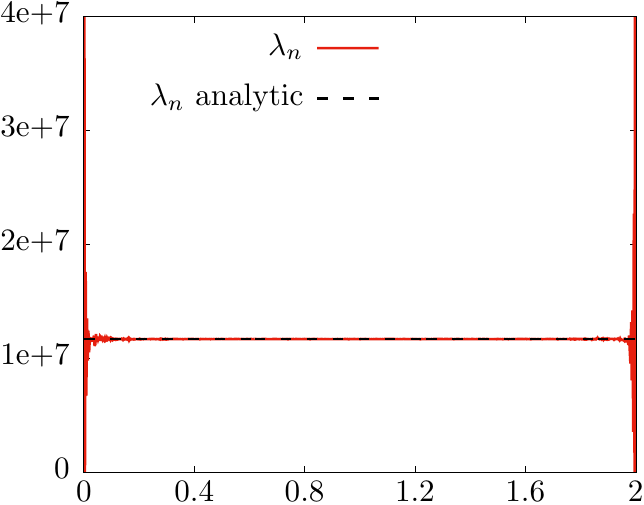}};
		\node[below=of img, node distance=0cm, rotate=0, anchor=center,yshift=0.8cm] {~~~~\footnotesize{ $\tau$ (m)}};
	\end{tikzpicture}
	\hspace{1cm}
	\begin{tikzpicture}
		\node (img)  {\includegraphics[scale=0.55]{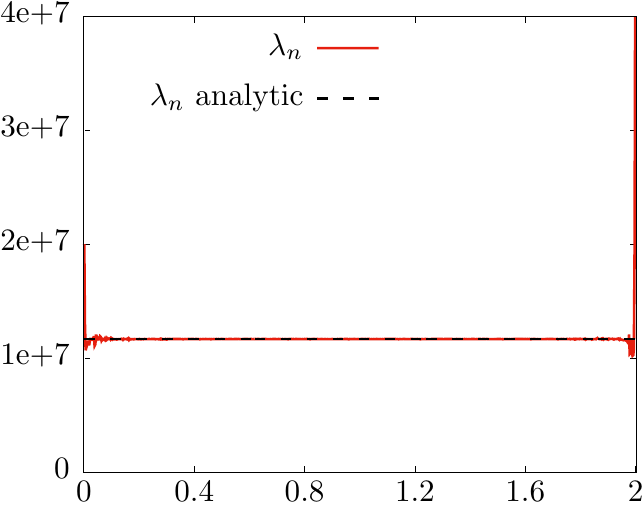}};
		\node[below=of img, node distance=0cm, rotate=0, anchor=center,yshift=0.8cm] {~~~~\footnotesize{ $\tau$ (m)}};
	\end{tikzpicture}

	\begin{tikzpicture}
	\node (img)  {\includegraphics[scale=0.55]{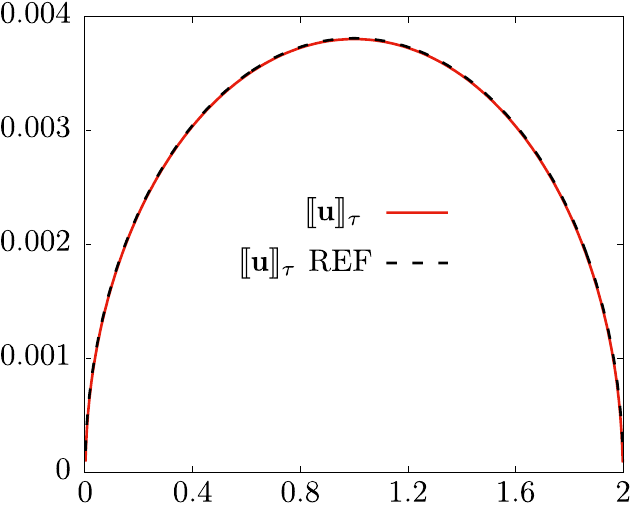}};
	\node[below=of img, node distance=0cm, rotate=0, anchor=center,yshift=0.8cm] {~~~~\footnotesize{ $\tau$ (m)}};
\end{tikzpicture}
\hspace{1cm}
\begin{tikzpicture}
	\node (img)  {\includegraphics[scale=0.55]{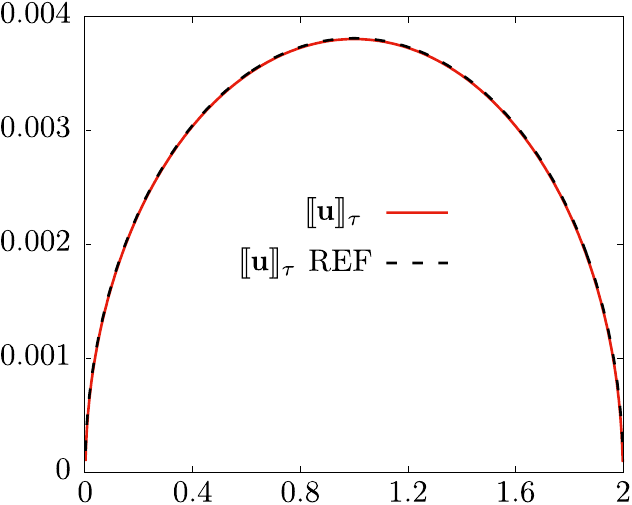}};
	\node[below=of img, node distance=0cm, rotate=0, anchor=center,yshift=0.8cm] {~~~~\footnotesize{ $\tau$ (m)}};
\end{tikzpicture}
	\caption{Comparison between the numerical and analytical solutions on the finest mesh (with 800 fracture faces), in terms of $\lambda_{\n}$ and $\jump{\bu}_\tang$ with one-sided bubbles  (left) and two-sided bubbles  (right), example of Section~\ref{tchelepi_meca}.}
	\label{compression_comparison}
\end{figure}
\begin{figure}[H]
	\centering
	\begin{tikzpicture}
	\node (img)  {\includegraphics[scale=0.6]{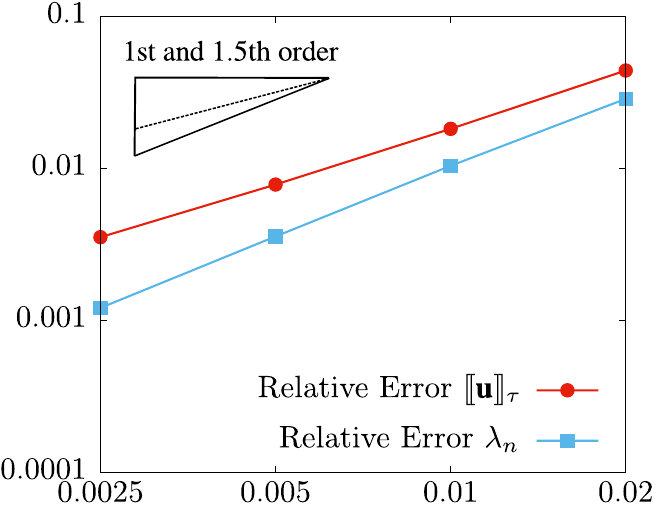}};
	\node[below=of img, node distance=0cm, rotate=0, anchor=center,yshift=0.8cm] {~~~~\footnotesize{ $h$ (m)}};
	\node[left=of img, node distance=0cm, rotate=90, anchor=center,yshift=-0.7cm] {\footnotesize{Relative $L^2$ Error}};
    \end{tikzpicture}
	\caption{Relative $L^2$ norms of the errors $\jump{\bu}_\tang - \jump{\bu_\D}_{\D,\tang}$ and $\lambda_\n - \lambda_{\D,\n}$ away from the tip, with respect to the size of the largest fracture face denoted by $h$, one-sided bubble case. Test case of Section~\ref{tchelepi_meca}.}
	\label{compression_error}
\end{figure}

\begin{table}[!ht]
	\begin{center}
		\begin{tabular}{ |c|c|c|c|c|c|c|c|c| } 
			\hline 
			$\#\faces_\Gamma$ & $N^{\text{dof}}$ & $E_{L^2}^{\jump{\bu }_{\tang}}$ & order $\jump{\bu }_{\tang}$& $E_{L^2}^{\lambda_{\n}}$ & order $\lambda_{\n}$  &  $N^{\text{Newton}}$\\
			\hline \hline
			100  &13028 & 4.36E-2  & -  & 2.23E-2  & -  &  2   \\
			\hline
			200 & 50992  & 1.80E-2 &  1.27 & 8.84E-3  &  1.34 &   2\\ 
			\hline
			400 & 201728  & 7.71E-3  &  1.25 & 2.91E-3 & 1.60 &  2  \\ 
			\hline
			800 & 802432	& 3.46E-3  &  1.15 & 9.89E-4 & 1.56  &   2 \\ 
			\hline
		\end{tabular} 
	\end{center}
	\caption{Relative $L^2$ errors and convergence orders for $\jump{\bu}_\tang - \jump{\bu_\D}_{\D,\tang}$ and $\lambda_\n - \lambda_{\D,\n}$ away from the tip, and number $N^{\text{Newton}}$ of semi-smooth Newton iterations for the different meshes with $N^{\text{dof}}$ scalar degrees of freedom (3D mesh) and $\#\faces_\Gamma$ fracture faces. One-sided bubble case. Test case of Section~\ref{tchelepi_meca}.}
	\label{tab:Table_11}
\end{table}

\subsubsection{2D Discrete Fracture Matrix model with 6 fractures: static contact-mechanics test case}
\label{bergen_meca}

To illustrate the behaviour of our scheme on a more complex fracture network, we consider the Discrete Fracture Matrix (DFM) model test case presented in~\cite[Section~4.1]{contact-norvegiens}, where a $2 \text{m} \times 1 \text{m} \times 1 \text{m}$ domain including a network $\Gamma = \bigcup_{i=1}^{6} \Gamma_i$  of fractures is considered, see Figure \ref{6_fractures}. Fracture 1 is made up of two sub-fractures forming a corner, whereas one of the tips of Fracture 5 lies on the boundary of the domain.
We use the same values of Young's modulus and Poisson's ratio, $E=4$ GPa and $\nu=0.2$, and the same set of boundary conditions as in \cite{contact-norvegiens}, that is, the two left and right sides of the domain are free, and we impose $\bu = 0$ on the lower side and $\bu= \prescript{t}{}[0.005\,\text{m}, -0.002\,\text{m}]$ on the top side. The friction coefficient is $F_i(\x) = 0.5(1 + 10  e^{-D_i^2(\x)/0.005})$, with $i \in  \{1,...,6 \} $ the fracture index, $\x \in \Gamma_i$ a generic point on fracture $i$, and $D_i(\x)$ the minimum distance from $\x$ to the tips of fracture $i$ (the bend in Fracture 1 is not considered a tip). 

Since no closed-form solution is available for this test case, the numerical convergence is evaluated with respect to a reference solution computed on a fine mesh made of 730\,880 triangular elements. Figure~\ref{conv_6_frac_meca} shows the convergence rates obtained for both $\jump{\bu}$ and $\bm\lambda$. The family of triangular meshes is obtained by successive uniform refinements of a given initial coarse mesh. As in~\cite{contact-norvegiens}, an asymptotic first-order convergence is observed for the vector Lagrange multiplier for all fractures, except Fracture 4 which exhibits a convergence rate close to 2 owing to its entire contact-stick state, and Fracture 1 which exhibits a lower rate due to the additional singularity induced by the corner. For the jump of the displacement field across fractures, we obtain an asymptotic convergence rate equal to 1.5 for all fractures. In Figure \ref{compar_mix_nit}, we compare the error curves of  the displacement jump $\jump{\bu }$ and the traction mean value $ \left(\T^{\scriptstyle{+}} - \T^{-} \right)/2$ between our method and the Nitsche $\Po^1$ FEM for contact-mechanics \cite{beaude2023mixed}, for Fractures 1, 2, and 3. It is noticeable that we have approximately the same convergence for both methods.
Taking advantage of the flexibility of the polytopal VEM method, we then consider a modified mesh obtained by inserting a node at the midpoint of each fracture-edge, generating 4-node triangles on both sides of the fractures. Figure \ref{tri_3_4_nodes} exhibits the tangential displacement jumps obtained with the original coarse mesh, and the one refined only along the fractures with these 4-node triangles. The numerical solution better captures the stick-slip transition on the refined mesh than on the original mesh. This improvement is achieved with a minor computational overcost, as the volumetric discretisation remains unchanged, and we exclusively refine along the fractures.
\begin{figure}[H]
	\centering
	
	\begin{minipage}{0.45\textwidth}
		\begin{tikzpicture}
			\node (img)  {\includegraphics[scale=0.2]{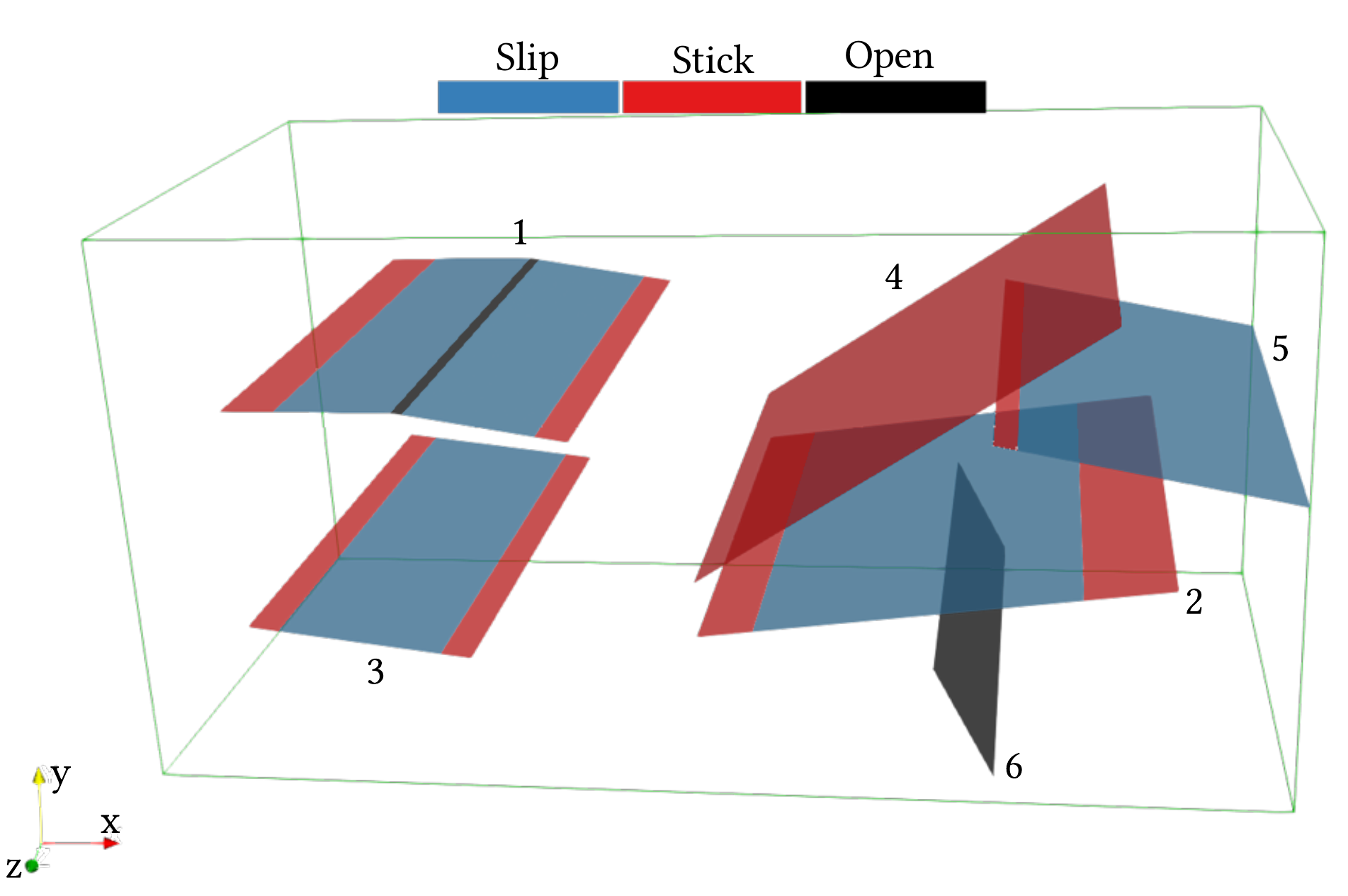}};
		\end{tikzpicture}
	\end{minipage}%
	\begin{minipage}{0.45\textwidth}
		\begin{tikzpicture}
			\node (img)  {\includegraphics[scale=0.14]{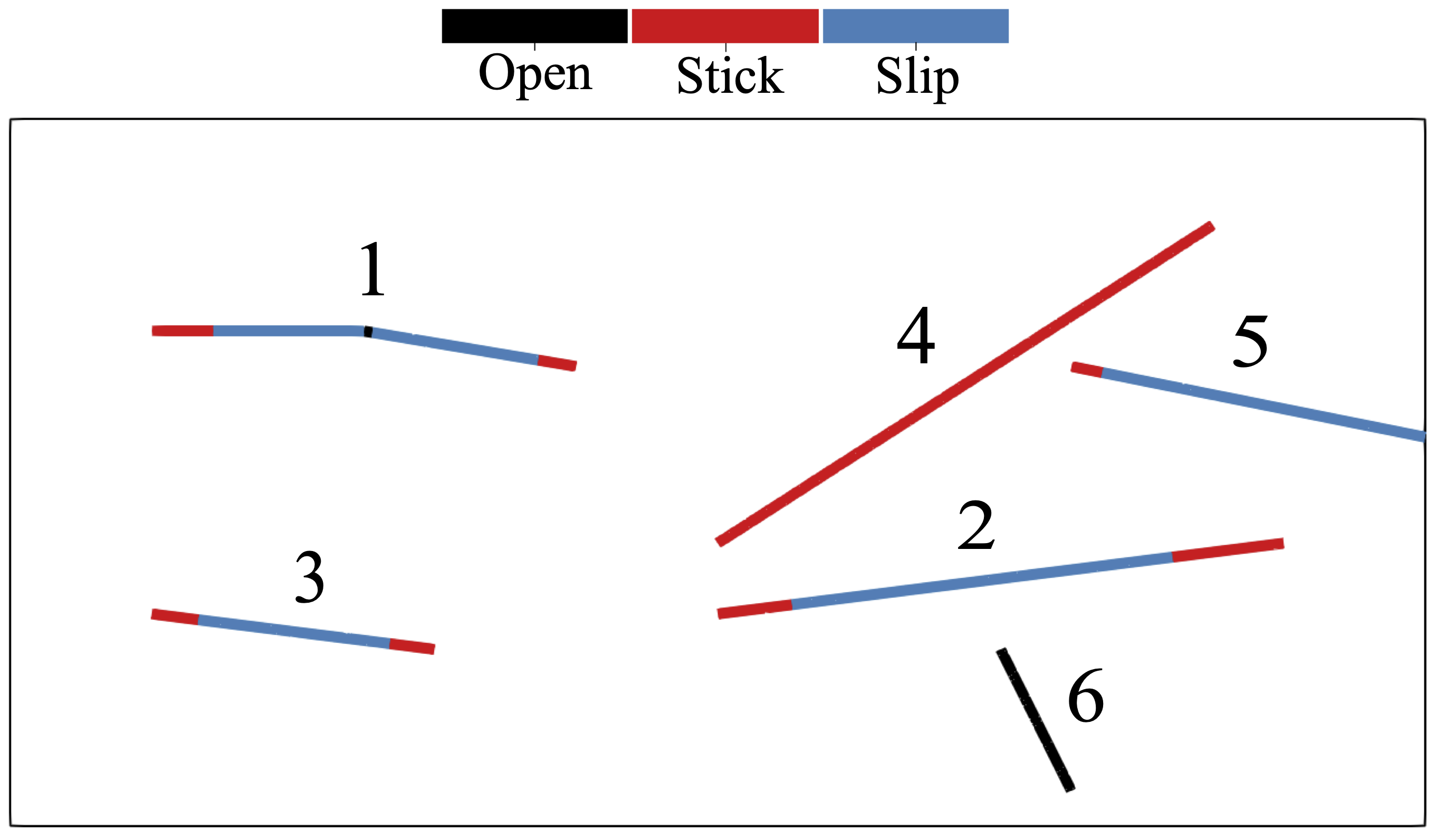}};
		\end{tikzpicture}
	\end{minipage}%
	\caption{3D prism / 2D Rectangular domain with six fractures. Fracture 1 comprises two sub-fractures making a corner, and Fracture 5 has a tip on the boundary. The contact state of each fracture obtained by the simulation is also shown on the same mesh: Mixed $\Po^1$-bubble VEM--$\Po^0$ (left) vs.~Nitsche $\Po^1$ FEM (right). Test case of Section \ref{bergen_meca}.}\label{6_fractures}	
\end{figure}
\begin{figure}[H]
	\centering
	\begin{minipage}{0.45\textwidth}
		\hspace{-1cm}
		\begin{tikzpicture}
			\node (img)  {\includegraphics[scale=0.3]{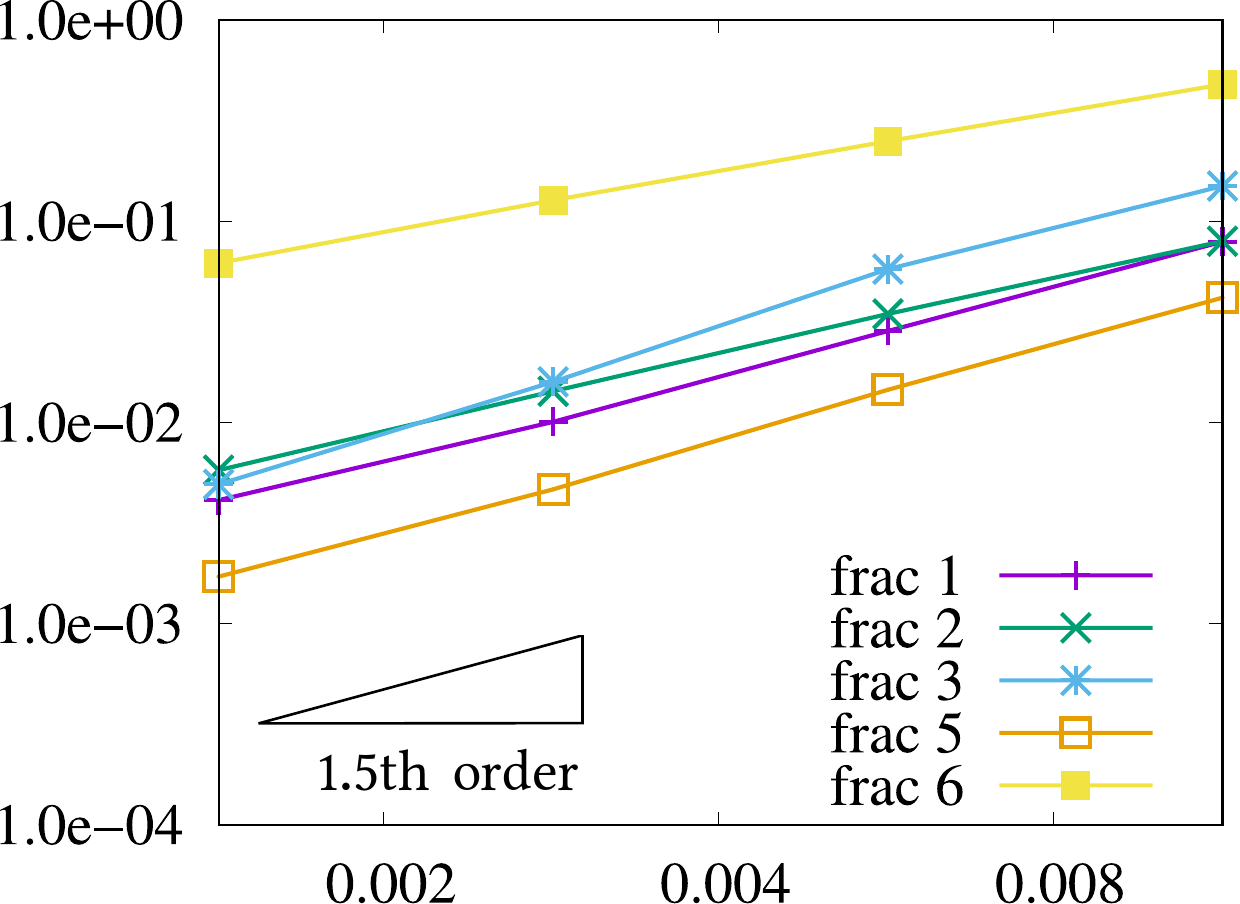}};
			\node[left=of img, node distance=0cm, rotate=90, anchor=center,yshift=-0.7cm] {\footnotesize{  $ \jump{\bu }$}};
		\end{tikzpicture}
	\end{minipage}%
	\begin{minipage}{0.45\textwidth}	 
		\hspace{-0.8cm}
		\begin{tikzpicture}
			\node (img)  {\includegraphics[scale=0.3]{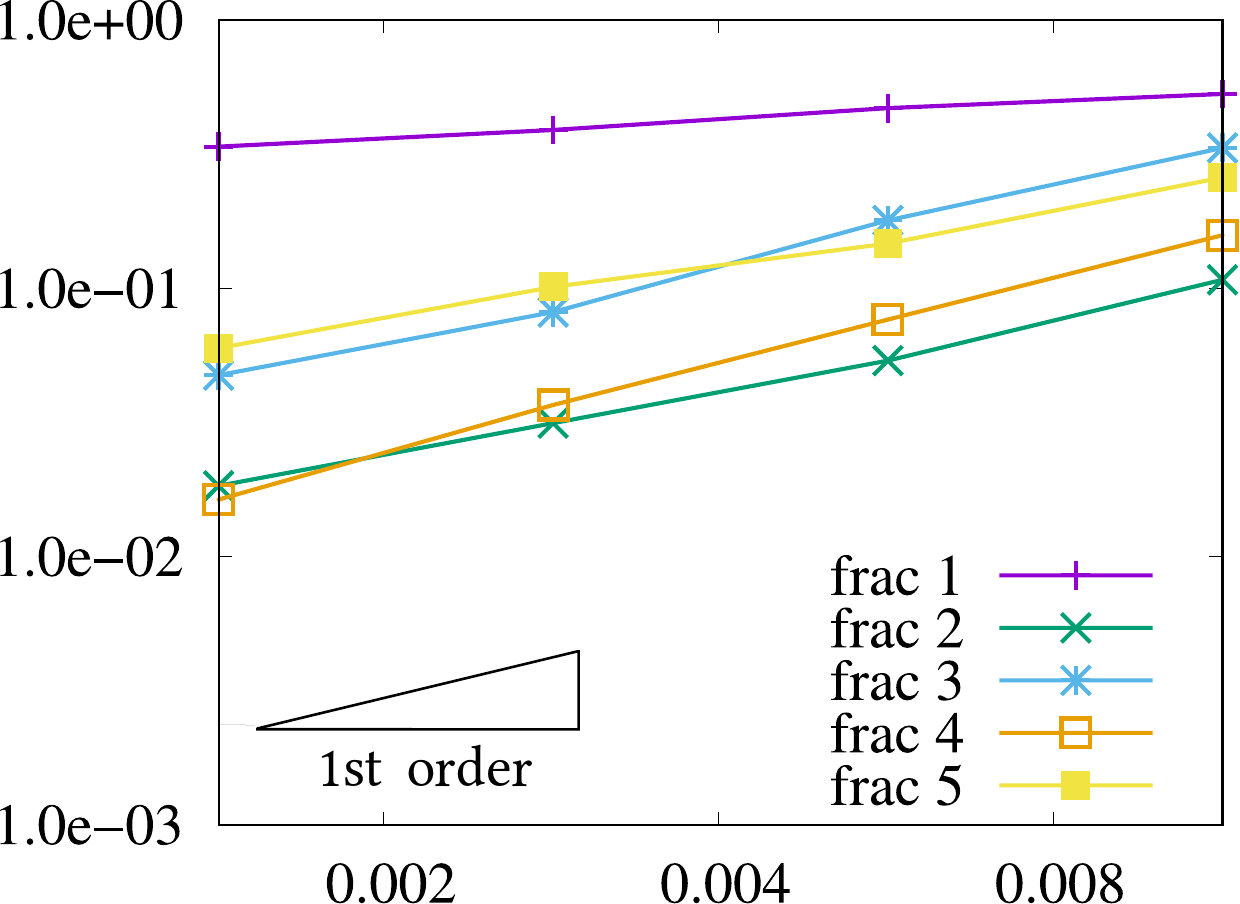}};
			\node[left=of img, node distance=0cm, rotate=90, anchor=center,yshift=-0.7cm] {\footnotesize{  $\l$}};
		\end{tikzpicture}
	\end{minipage}%
	\caption{Relative $L^2$ errors (using a reference solution) for $\jump{\bu }$ and $\l$, as functions of the size of the largest fracture face, yielding a 1.5-order of convergence for $\jump{\bu }$  and 1st order of convergence for $\l$. Test case of Section \ref{bergen_meca}.}\label{conv_6_frac_meca}	
\end{figure}

	\begin{figure}[H]
		\centering
		\begin{minipage}{0.3\textwidth}
			\hspace{-0.6cm}
			\begin{tikzpicture}
				\node (img)  {\includegraphics[scale=0.2]{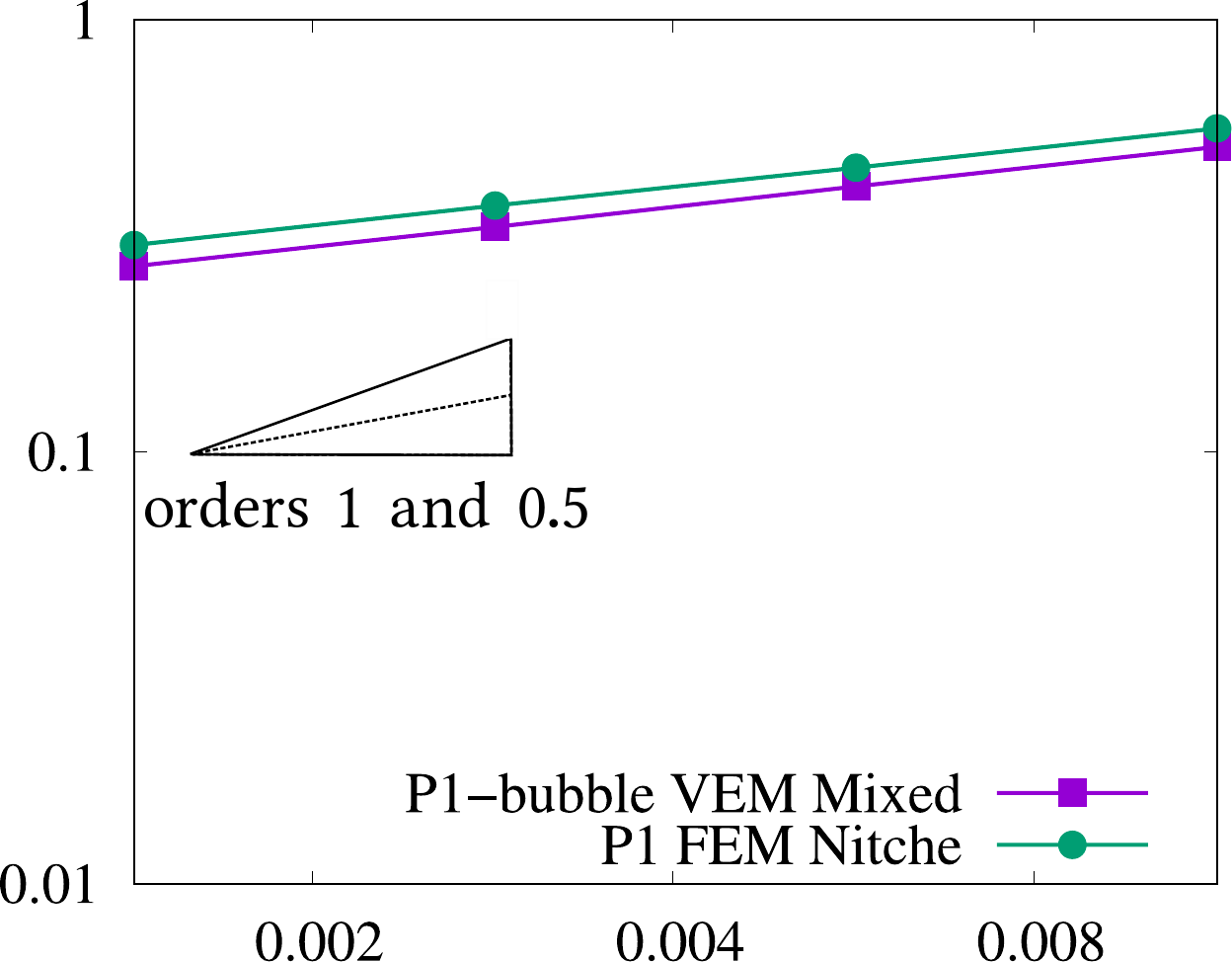}};
				\node[left=of img, node distance=0cm, rotate=90, anchor=center,yshift=-0.7cm] {\footnotesize{  $ \frac{\T^{\scriptstyle{+}} - \T^{-} }{2}$}};
			\end{tikzpicture}
		\end{minipage}%
	    \hspace{0.3cm}
		\begin{minipage}{0.3\textwidth}
			\begin{tikzpicture}
				\node (img)  {\includegraphics[scale=0.2]{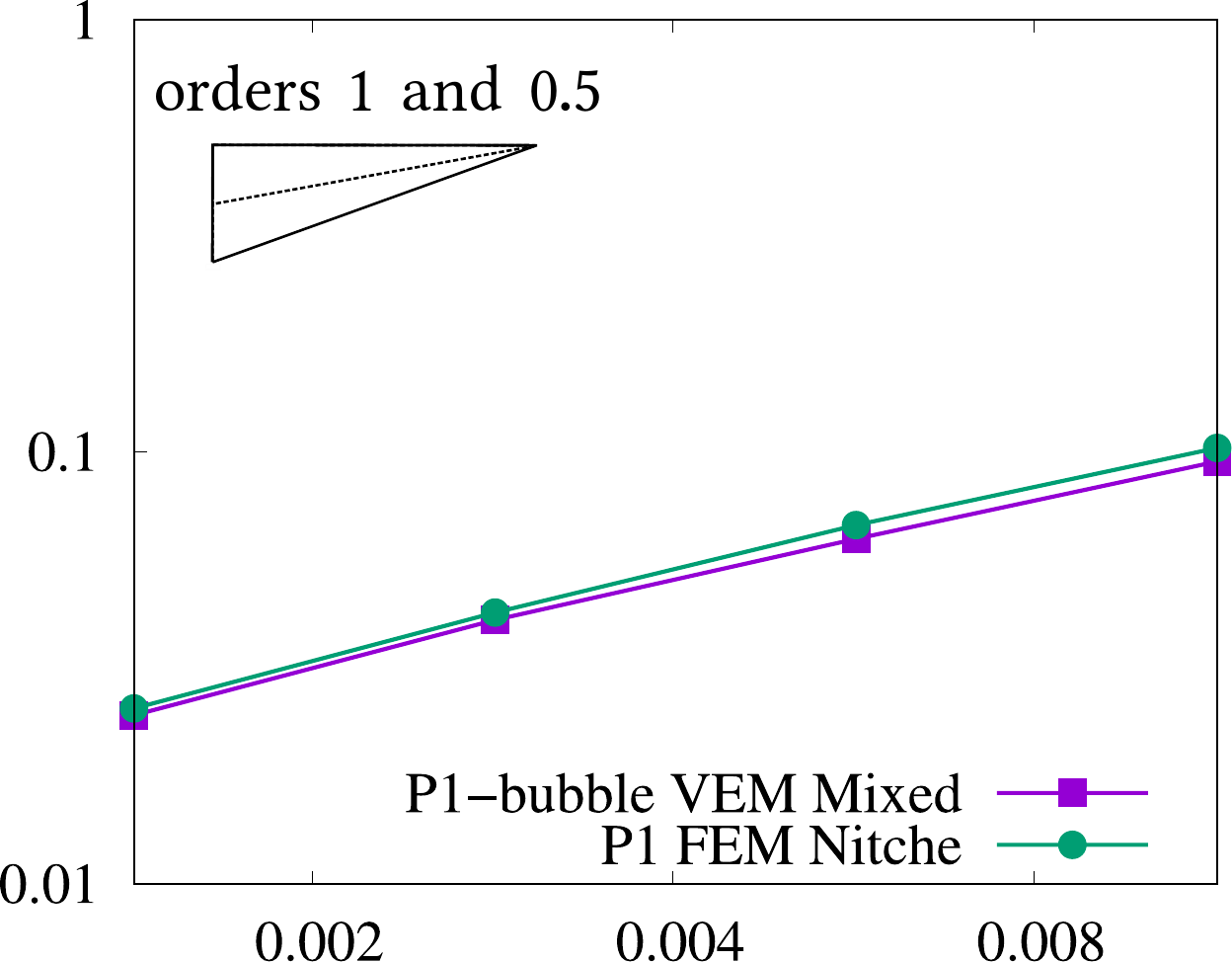}};
			\end{tikzpicture}
		\end{minipage}%
		\hspace{0.35cm}
		\begin{minipage}{0.3\textwidth}
			\hspace{0.15cm}
			\begin{tikzpicture}
				\node (img)  {\includegraphics[scale=0.2]{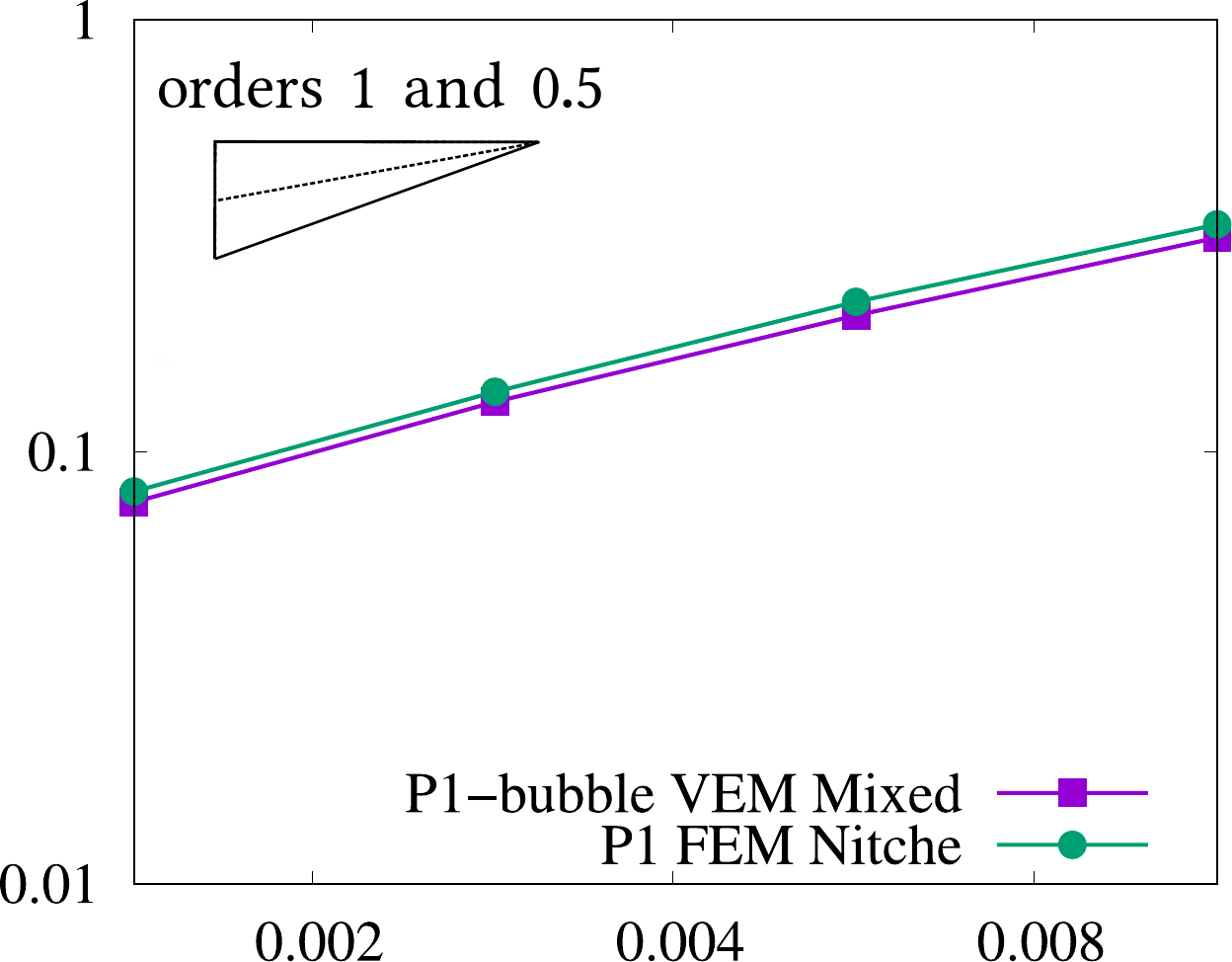}};
			\end{tikzpicture}
		\end{minipage}%
		
		\hspace{-0.4cm}
		\begin{minipage}{0.3\textwidth}
			\hspace{-1cm}
			\begin{tikzpicture}
				\node (img)  {\includegraphics[scale=0.22]{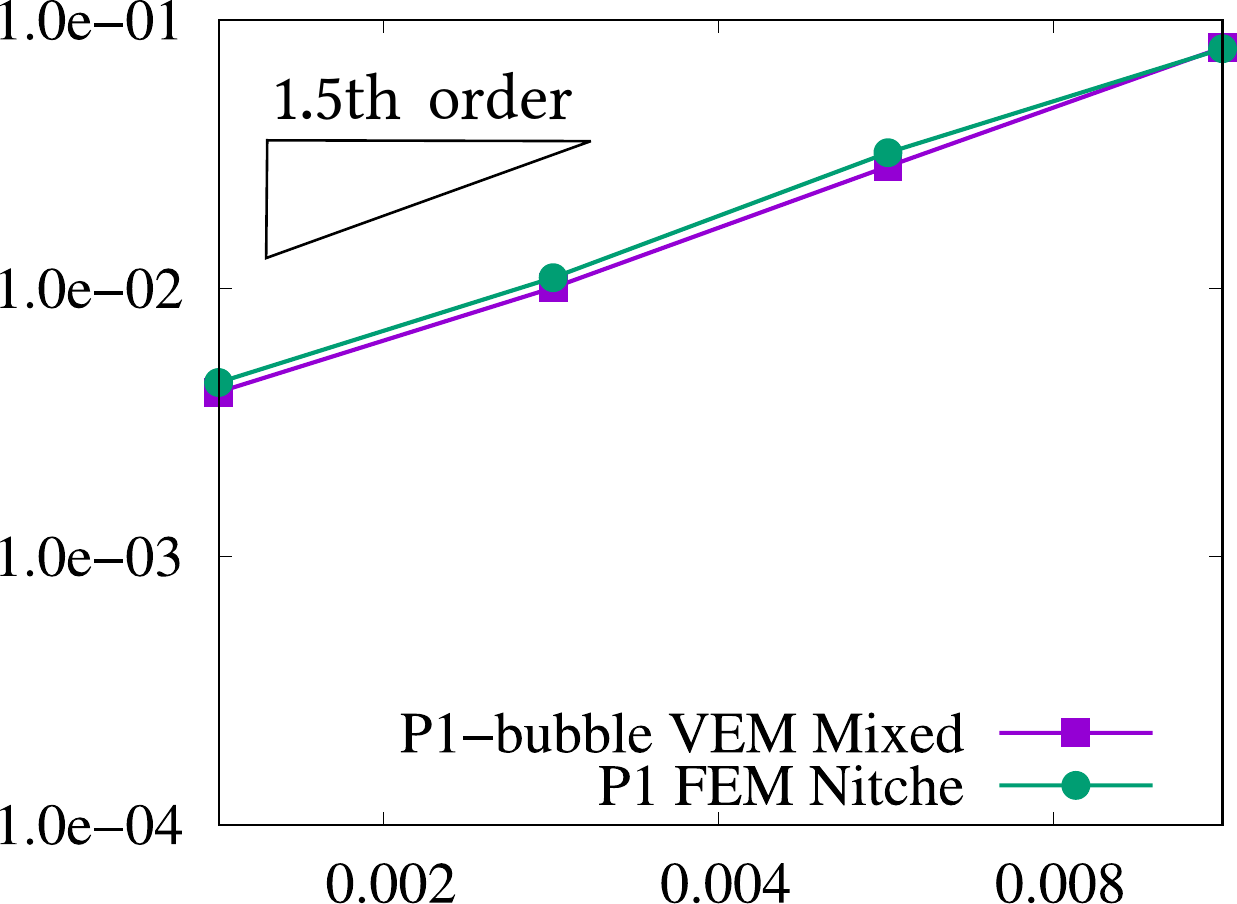}};
				\node[left=of img, node distance=0cm, rotate=90, anchor=center,yshift=-0.7cm] {\footnotesize{  $ \jump{\bu }$}};
			\end{tikzpicture}
		\end{minipage}%
		\begin{minipage}{0.3\textwidth}
			\begin{tikzpicture}
				\node (img)  {\includegraphics[scale=0.22]{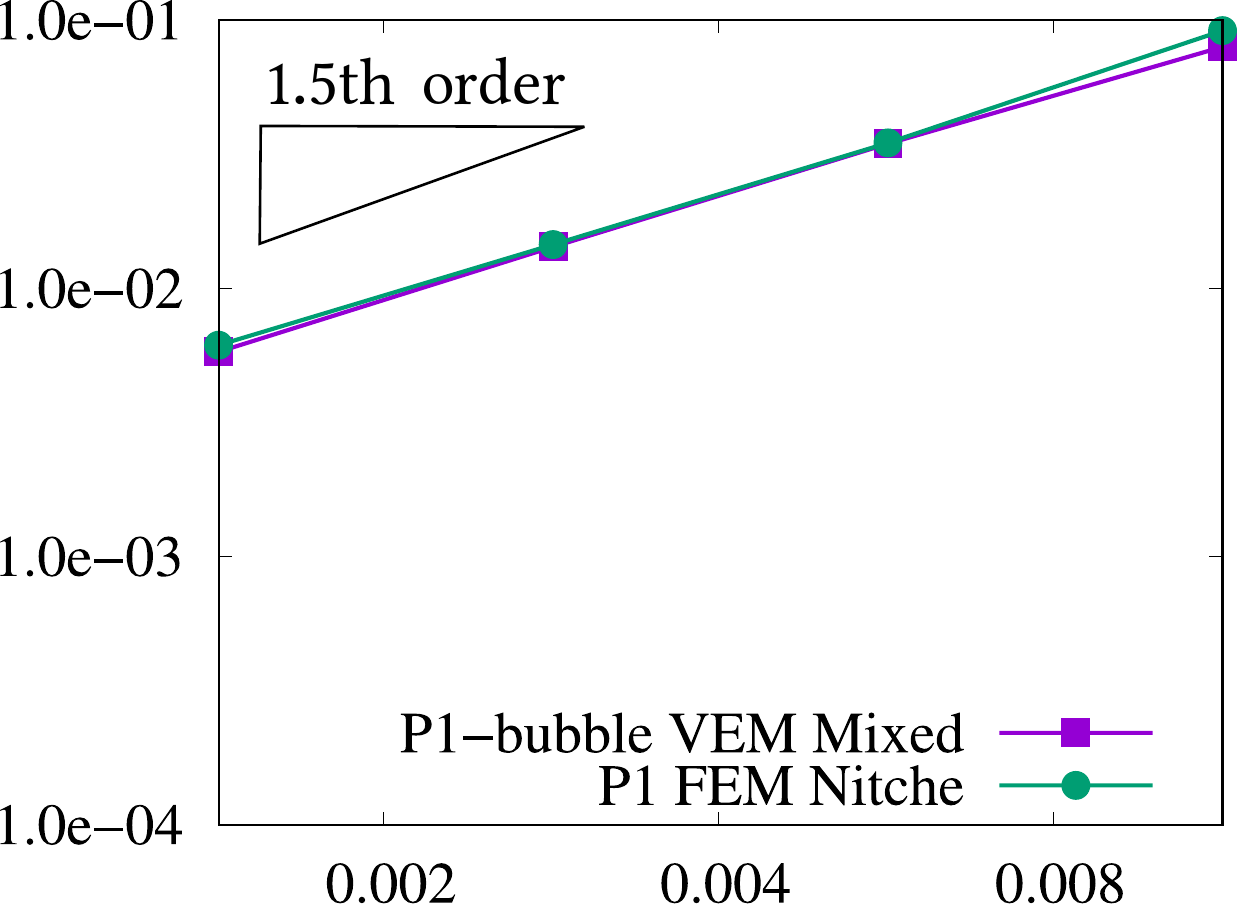}};
			\end{tikzpicture}
		\end{minipage}%
		\hspace{0.3cm}
		\begin{minipage}{0.3\textwidth}
			\hspace{0.5cm}
			\begin{tikzpicture}
				\node (img)  {\includegraphics[scale=0.22]{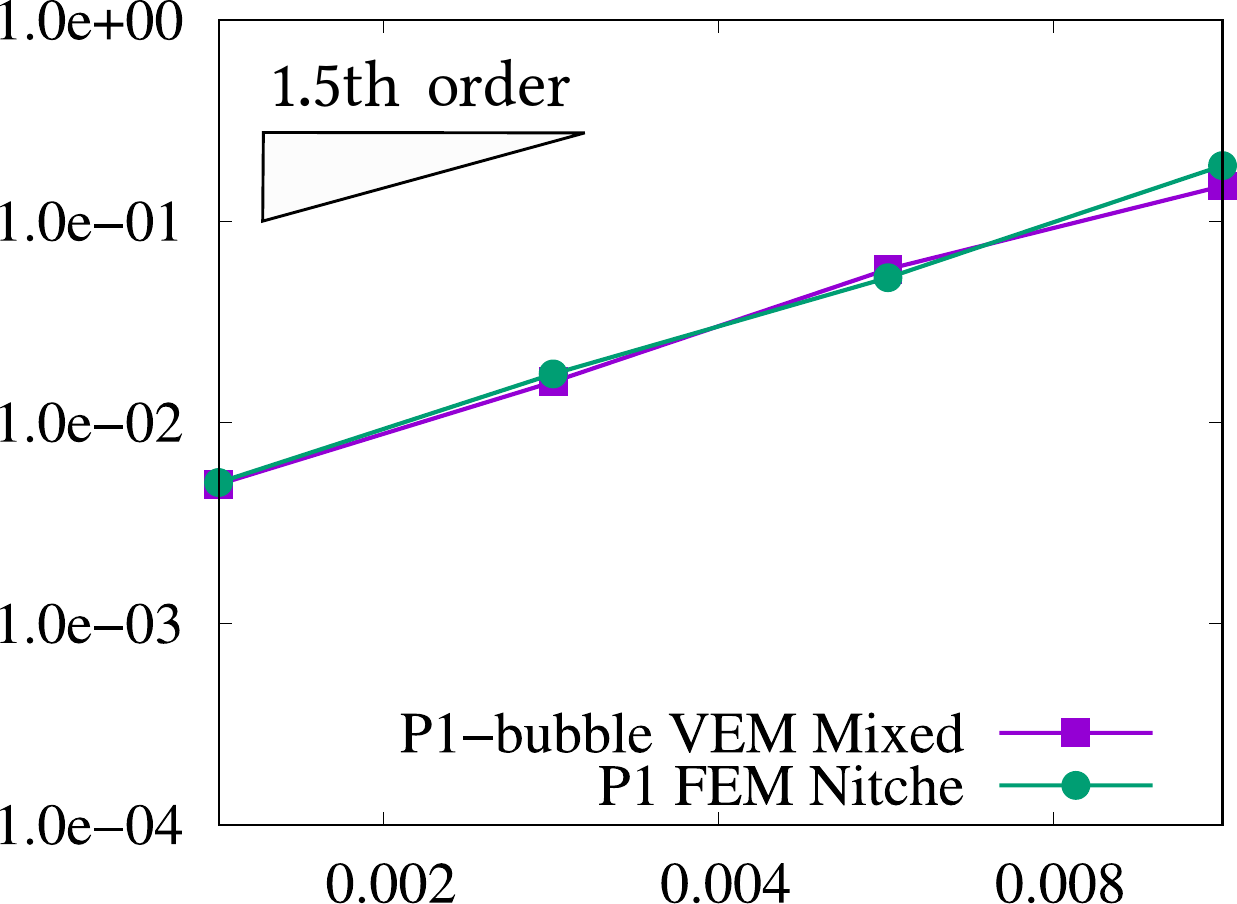}};
			\end{tikzpicture}
		\end{minipage}%

		\caption{Comparison of the Mixed $\Po^1$-bubble VEM--$\Po^0$ vs.~Nitsche $\Po^1$ FEM: relative $L^2$ errors on $ \left(\T^{\scriptstyle{+}} - \T^{-} \right)/2$ (top) and $\jump{\bu }$ (bottom), as functions of the size of the largest fracture face. These errors are computed along Fractures 1, 2 and 3 (left to right), using a reference solution. Test case of Section \ref{bergen_meca}.}\label{compar_mix_nit}
	\end{figure}

	\begin{figure}[H]
	\centering
	\begin{minipage}{0.45\textwidth}
		\hspace{-1cm}
		\begin{tikzpicture}
			\node (img)  {\includegraphics[scale=0.6]{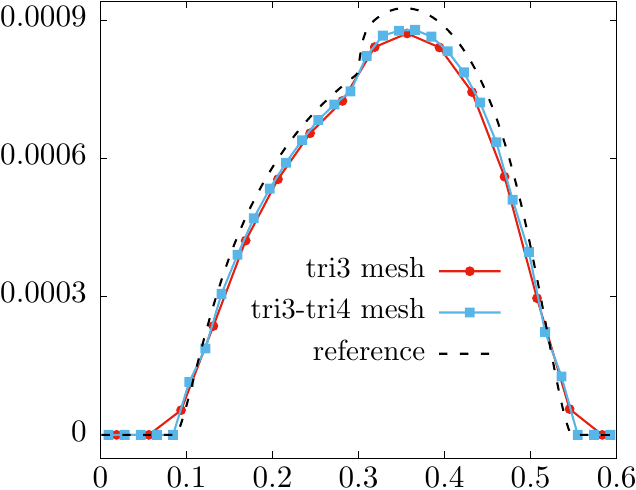}};
			\node[left=of img, node distance=0cm, rotate=90, anchor=center,yshift=-0.7cm] {\footnotesize{  $ \jump{\bu }_{\tang}$}};
		\end{tikzpicture}
	\end{minipage}%
	\begin{minipage}{0.45\textwidth}
		\hspace{-0.5cm}
		\begin{tikzpicture}
			\node (img)  {\includegraphics[scale=0.6]{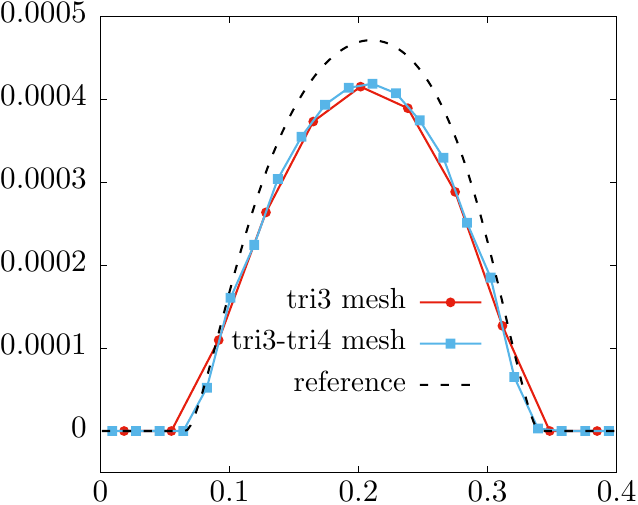}};
		\end{tikzpicture}
	\end{minipage}%
	\caption{Tangential jumps $\jump{\bu}_{\tang}$ on Fracture 1 (left) and Fracture 3 (right) obtained  on the original coarse triangular mesh ("tri3 mesh''), and the mesh obtained by refining, only along the fracture faces, using 4-node triangles ("tri3-tri4 mesh"). Test case of Section \ref{bergen_meca}.}\label{tri_3_4_nodes}	
\end{figure}

\subsection{Poromechanical test cases} \label{test_poromeca}

The objective here is first to compare our discretisation with the one presented in \cite{beaude2023mixed} combining a Nitsche $\Po^2$ FEM for the contact-mechanics with the HFV scheme for the flow on triangular 2D meshes. The second objective is to assess the robustness of our approach on a 3D test case with a fracture network including intersections. 
The coupled nonlinear system is solved at each time step of the simulation using the fixed-stress algorithm \cite{KTJ11} adapted to mixed-dimensional models following \cite{GKW16}. This algorithm comprises two nested loops: an outer one on the time steps (index $n$), and an inner one on the fixed-stress steps (index $k$). For each time step, the inner loop is:
\begin{itemize}
\item[$\bullet$] \emph{Initialization ($k=0$)}. Set an guess for the displacement and pressure by interpolating from the previous two time steps:
   $$
    \bu_\D^{n,0} = \bu_\D^{n-1} + \Delta t^n {\bu_\D^{n-1} - \bu_\D^{n-2} \over \Delta t^{n-1}},\qquad 
    p_\D^{n,0} = p_\D^{n-1} + \Delta t^n {p_\D^{n-1} - p_\D^{n-2} \over \Delta t^{n-1}}. 
   $$ 
\item[$\bullet$] \emph{Step from $k-1$ to $k$}. While ${\|\bu_\D^{n,k} - \bu_\D^{n,k-1}\|_\infty \over u_{\rm ref} } + {\|p_\D^{n,k} - p_\D^{n,k-1}\|_\infty \over p_{\rm ref} } \geq \epsilon_{\rm fs}$ do 
 \begin{itemize}
 \item[$\bullet$] Given $\bu_\D^{n,k-1}$, compute $p_\D^{n,k}$ solution of the linear Darcy flow system, using the following porosity and fracture width:
   $$
   \begin{aligned}
     & \phi_\D^{n,k} = \phi_\D^{n-1} + b\div_\D (\bu_\D^{n,k-1}-\bu_\D^{n-1})   + {1\over M} \Pi_{\D_m} (p_{\D_m}^{n,k}-p_{\D_m}^{n-1}) \\ & \qquad\qquad\qquad \qquad\qquad\qquad\qquad + C_{r,m} 
      \Pi_{\D_m} (p_{\D_m}^{n,k}-p_{\D_m}^{n,k-1}),\\
     & \d_{f,\D}^{n,k} =   \d_{f,\D}^{n-1} - \jump{\bu_{\D}^{n,k-1} - \bu_{\D}^{n-1} }_{\D,\n} + C_{r,f}
     \Pi_{\D_m} (p_{\D_f}^{n,k}-p_{\D_f}^{n,k-1}). 
   \end{aligned}
   $$
   \item[$\bullet$] Given $p_\D^{n,k}$, compute $\bu_\D^{n,k}$ solution of the contact-mechanical system using the semi-smooth Newton algorithm.
 \end{itemize}  
\end{itemize}
Note that the fixed-stress iterative algorithm is initialised in such a way that the first iterate corresponds to the sequential fixed-stress solution \cite{BM2000}. 
At each subsequent iteration, the Darcy linear problem is solved using a GMRes iterative solver preconditioned by AMG with stopping criteria $10^{-8}$. The contact-mechanical model is then solved using the semi-smooth Newton method combined with a direct sparse linear solver; the stopping criteria is $10^{-10}$ on the relative residual, or on the maximum displacement increment (whichever comes first).  The fixed-stress stopping criteria is fixed to $\epsilon_{\rm fs} = 10^{-5}$ with $u_{\rm ref} = 10^{-3}$ m, $p_{\rm ref} = 10^5$ Pa, and the relaxation parameters  are given by $C_{r,m} = {3 b^2 \over 2 \mu + 3 \lambda}$ and $C_{r,f} = 0$.

\subsubsection{2D DFM with 6 fractures: poromechanical test case}\label{bergen_coup}

This test case presented in \cite[Section 5]{beaude2023mixed}  adds the fluid flow to the contact-mechanical test case of Section \ref{bergen_meca}. On the mechanical side, the only changes are related to the friction coefficient fixed here to $F=0.5$ and to the following time dependent Dirichlet boundary conditions on the top boundary:
\begin{equation*}
	\bu(t,\x)= 
	\left\{
	\begin{array}{l@{\quad}l}
		\dsp \prescript{t}{}[0.005\,\text{m}, -0.002\,\text{m}] \frac{4t}{T}&\text{if}~t\le\frac{T}{4},\\[1ex]
		\prescript{t}{} [0.005\,\text{m}, -0.002\,\text{m}] & \text{otherwise}.
	\end{array}
	\right.
\end{equation*} 
Concerning the flow boundary conditions, all sides are impervious except the left one, on which a pressure equal to the initial pressure $10^5$ Pa is prescribed.
To fully exploit the capabilities of the HFV flow discretisation, we consider the following anisotropic permeability tensor in the matrix:
$$
  \K_m=K_m\left(\begin{array}{cc}1 & 0 \\ 0 & 1/2\end{array}\right).
$$
The permeability coefficient is set to $K_m = 10^{-15}$~m$^2$, the Biot coefficient to $b=0.5$, the Biot modulus to $M=10$~GPa, and the dynamic viscosity to $\eta = 10^{-3}$~Pa${\cdot}$s.
For further details related to the Darcy flow, we refer the reader to \cite[Section 5]{beaude2023mixed}. The triangular meshes are the same as in Section \ref{bergen_meca}, the final time is set to $T=2000\,\text{s}$, and a uniform time stepping with 20 time steps is used for the Euler implicit time integration (the same time integration is used for the Nitsche $\Po^2$ FEM used for comparison).
 
Figure \ref{pm_mean} exhibits the good convergence behaviour of the mean matrix pressure as a function of time, on a family of three uniformly refined meshes and using a numerical reference solution computed on a finer mesh through the Nitsche $\Po^2$ FEM presented in \cite{beaude2023mixed}. 
The matrix over-pressure (w.r.t.~the initial pressure) obtained at final time is also compared in Figure \ref{pm_mix_vs_nitsh} to the one obtained by the Nitsche $\Po^2$ FEM of \cite{beaude2023mixed}. Notably, we found that the numerical results of the two methods are very similar. 
Figure \ref{df_pf} shows the mean fracture aperture and pressure as functions of time for the family of three uniformly refined meshes, showing a good spatial convergence to the numerical reference solution. Note that the fracture pressure is extremely close to the trace of the matrix pressure due to the high conductivity of the fractures. 
Figure \ref{error_m_f_var} exhibits a 1.5-order convergence rate of the discrete $l^2$ errors in time of the matrix mean pressure, the porosity, the fracture mean pressure, the aperture, and the tangential jump. The reference solution is computed using the Nitsche $\Po^2$ FEM on a fine mesh. 

\begin{figure}[H]
    \centering
	 \begin{tikzpicture}
    	\node (img)  {\includegraphics[scale=0.35]{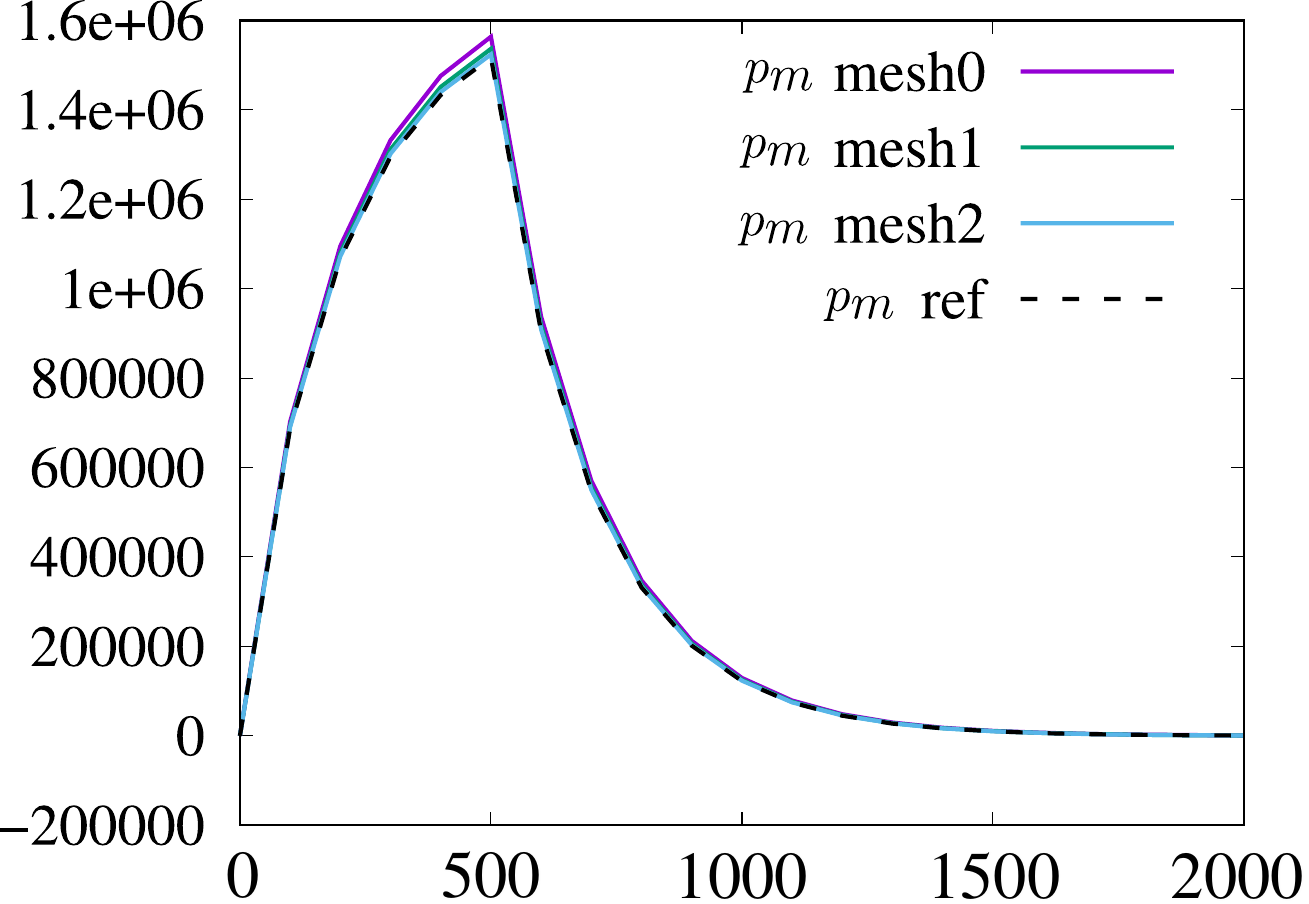}};
	    \node[left=of img, node distance=0cm, rotate=90, anchor=center,yshift=-0.6cm] {\footnotesize{  \text{Mean pressure in the matrix}}};
	    \node[below=of img, node distance=0cm, rotate=0, anchor=center,yshift=0.6cm,xshift=1.5em] {\footnotesize{ $t$ (\text{s})}};
     \end{tikzpicture}
	\caption{Mean pressure in the matrix as a function of time, for a family of three uniformly refined meshes. The reference solution is computed on a finer mesh using the Nitsche $\Po^2$ FEM presented in \cite{beaude2023mixed}. Test case of Section \ref{bergen_coup}.}
	\label{pm_mean}
\end{figure} 

\begin{figure}[!ht]
	\begin{minipage}{0.45\textwidth}
		\hspace{-0.4cm}
		\begin{tikzpicture}
			\node (img)  {\includegraphics[scale=0.155]{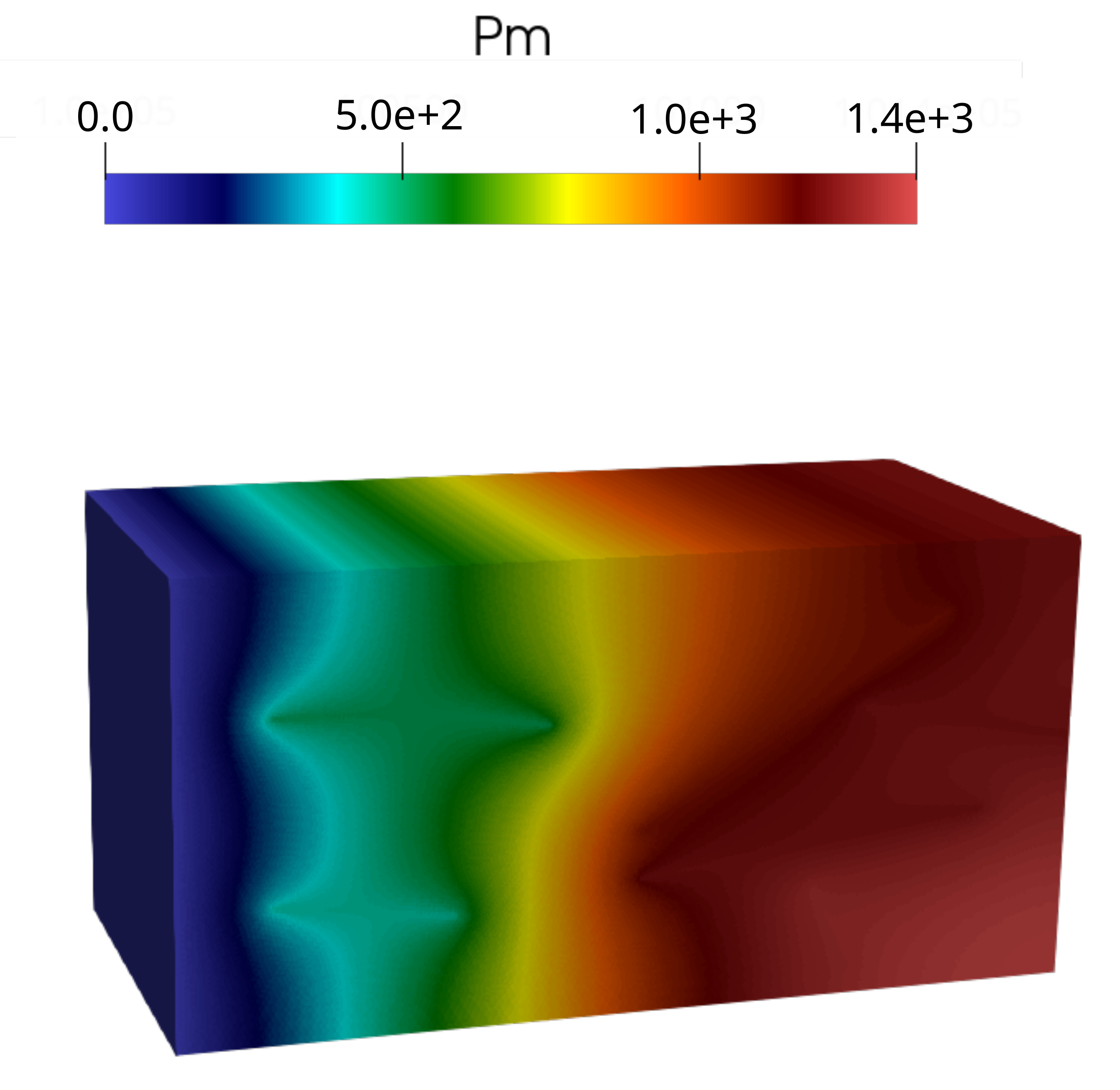}};
			\node[left=of img, node distance=0cm, rotate=90, anchor=center,yshift=-0.7cm] {\footnotesize{}};
		\end{tikzpicture}
	\end{minipage}\hspace*{.05\linewidth}
	\begin{minipage}{0.45\textwidth}     
		\hspace{-0.2cm}
		\vspace{-2.2cm} 
		\begin{tikzpicture}
			\node (img)  {\includegraphics[scale=0.35]{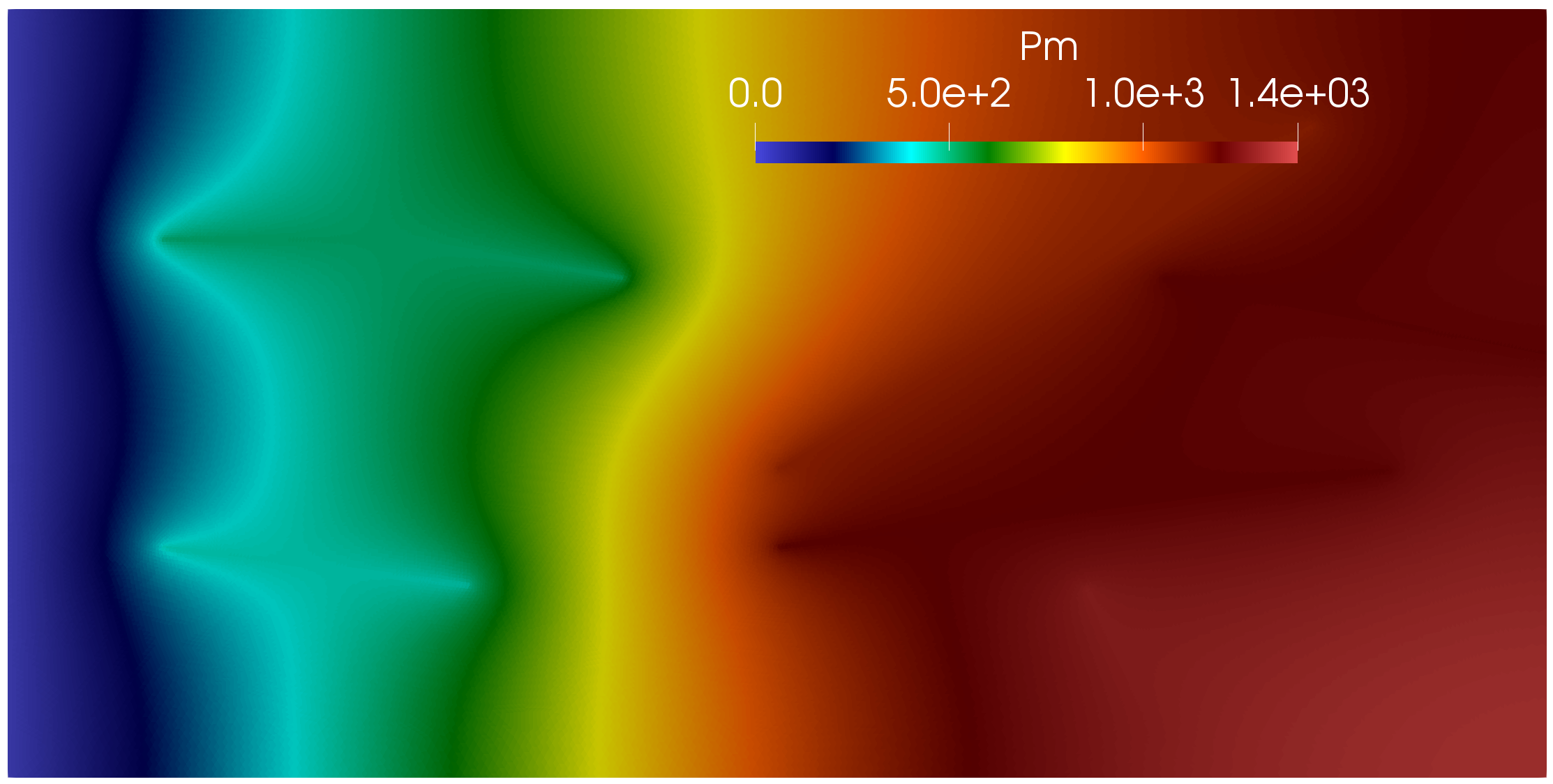}};
		\end{tikzpicture}
	\end{minipage}%
	
	\caption{Matrix over-pressures (w.r.t.~the initial pressure) in Pa at final time obtained with the mixed $\Po^1$-bubble VEM--$\Po^0$/HFV scheme (left) vs.~the Nitsche $\Po^2$ FEM/HFV scheme (right). Test case of Section \ref{bergen_coup}.}
	\label{pm_mix_vs_nitsh}
\end{figure}

\begin{figure}[H]
	\centering
	\begin{minipage}{0.45\textwidth}
		\hspace{-1.2cm}
		\begin{tikzpicture}
			\node (img)  {\includegraphics[scale=0.3]{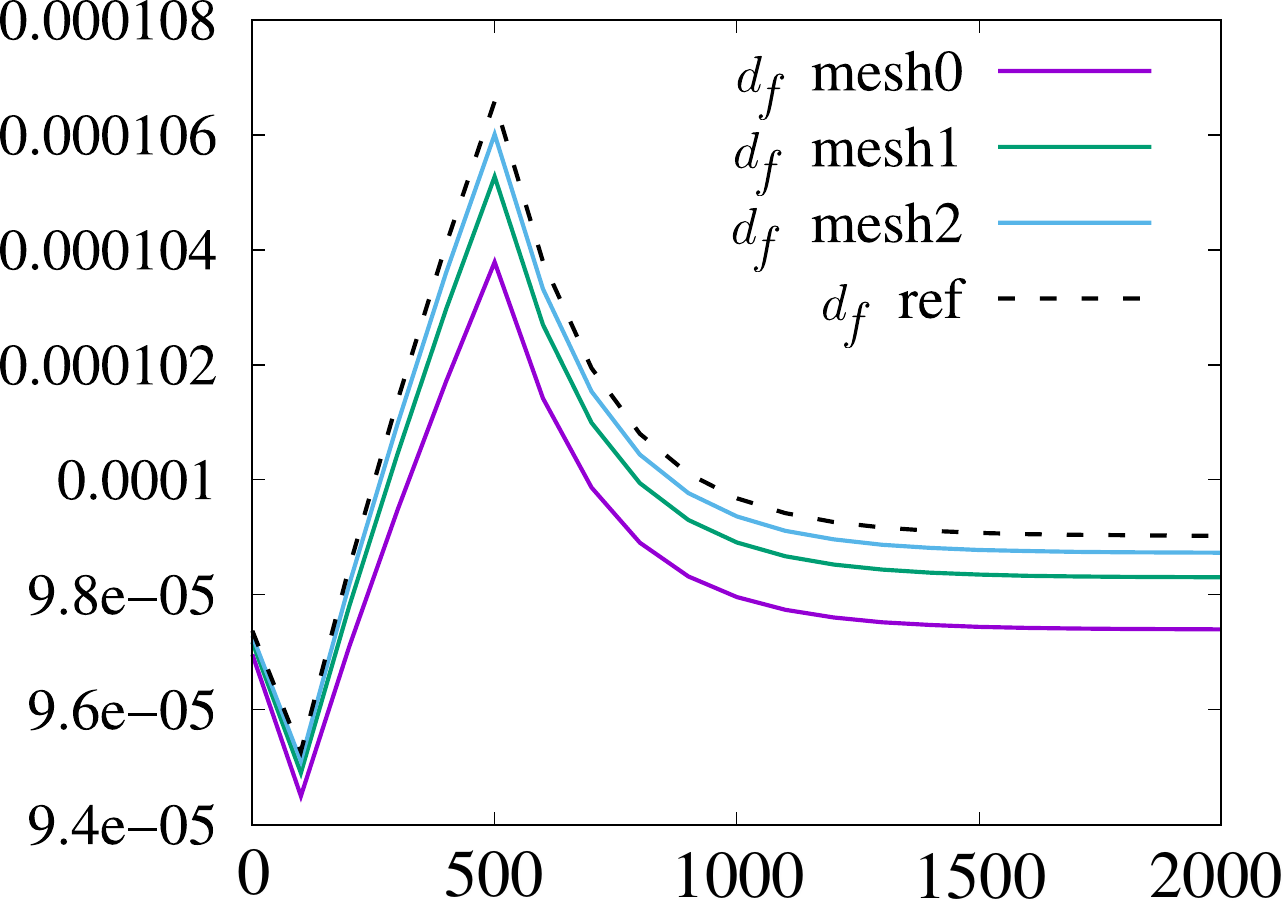}};
			\node[left=of img, node distance=0cm, rotate=90, anchor=center,yshift=-0.6cm] {\footnotesize{  \text{Mean aperture in the fracture}}};
		\node[below=of img, node distance=0cm, rotate=0, anchor=center,yshift=0.6cm,xshift=1.5em] {\footnotesize{ $t$ (\text{s})}};
		\end{tikzpicture}
	\end{minipage}%
	\begin{minipage}{0.45\textwidth}	 
		\hspace{-0.5cm}
		\begin{tikzpicture}
			\node (img)  {\includegraphics[scale=0.3]{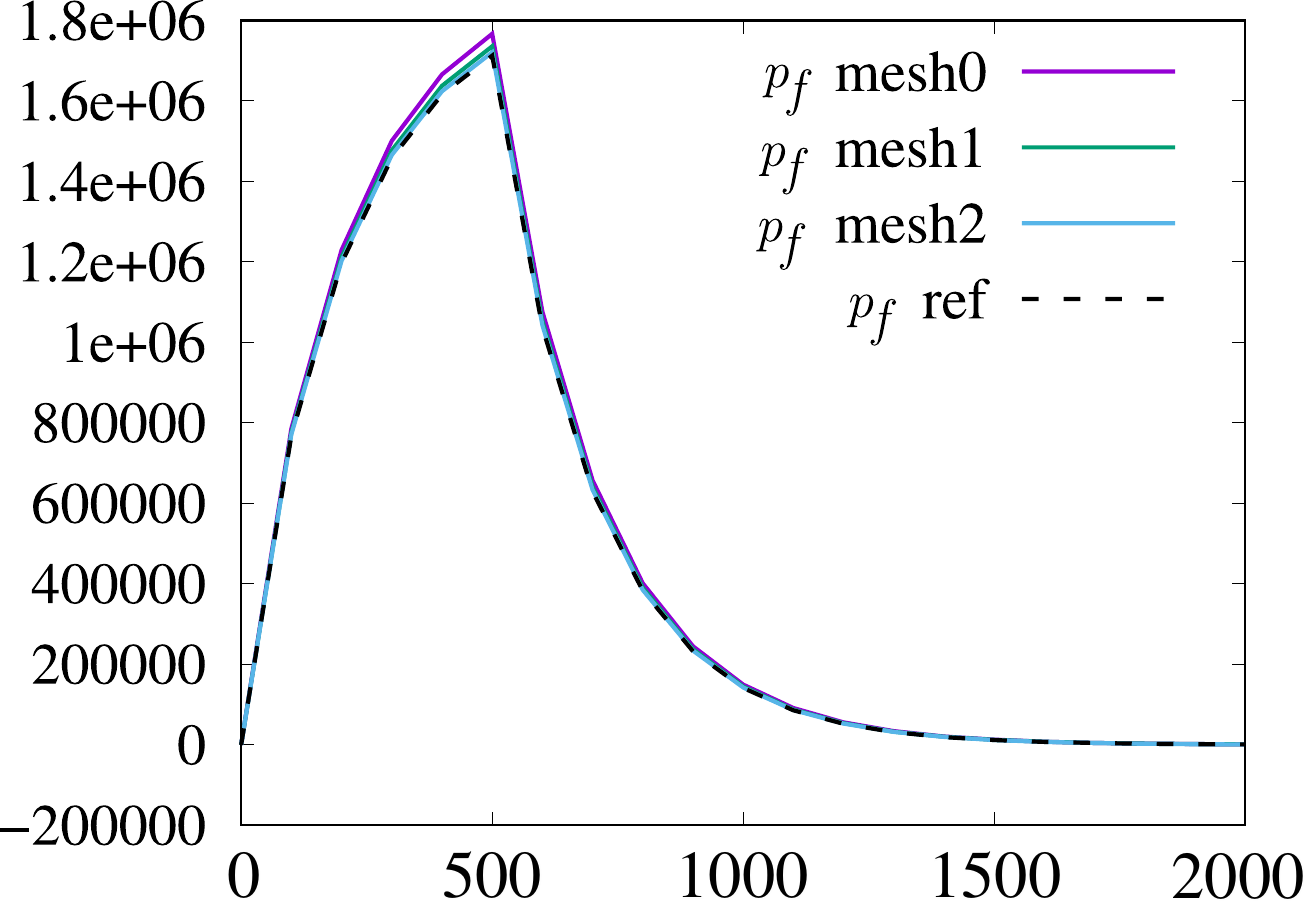}};
			\node[left=of img, node distance=0cm, rotate=90, anchor=center,yshift=-0.6cm] {\footnotesize{  \text{Mean pressure in the fracture}}};
			\node[below=of img, node distance=0cm, rotate=0, anchor=center,yshift=0.6cm,xshift=1.5em] {\footnotesize{ $t$ (\text{s})}};
		\end{tikzpicture}
	\end{minipage}%
	
	\caption{Mean fracture aperture and pressure as functions of time, for a family of three uniformly refined meshes. The reference solution is computed on a finer mesh using the Nitsche $\Po^2$ FEM presented in \cite{beaude2023mixed}. Test case of Section \ref{bergen_coup}.}\label{df_pf}	
\end{figure}

\begin{figure}[H]
	\centering
	\begin{minipage}{0.45\textwidth}
		\hspace{-1cm}
		\begin{tikzpicture}
			\node (img)  {\includegraphics[scale=0.3]{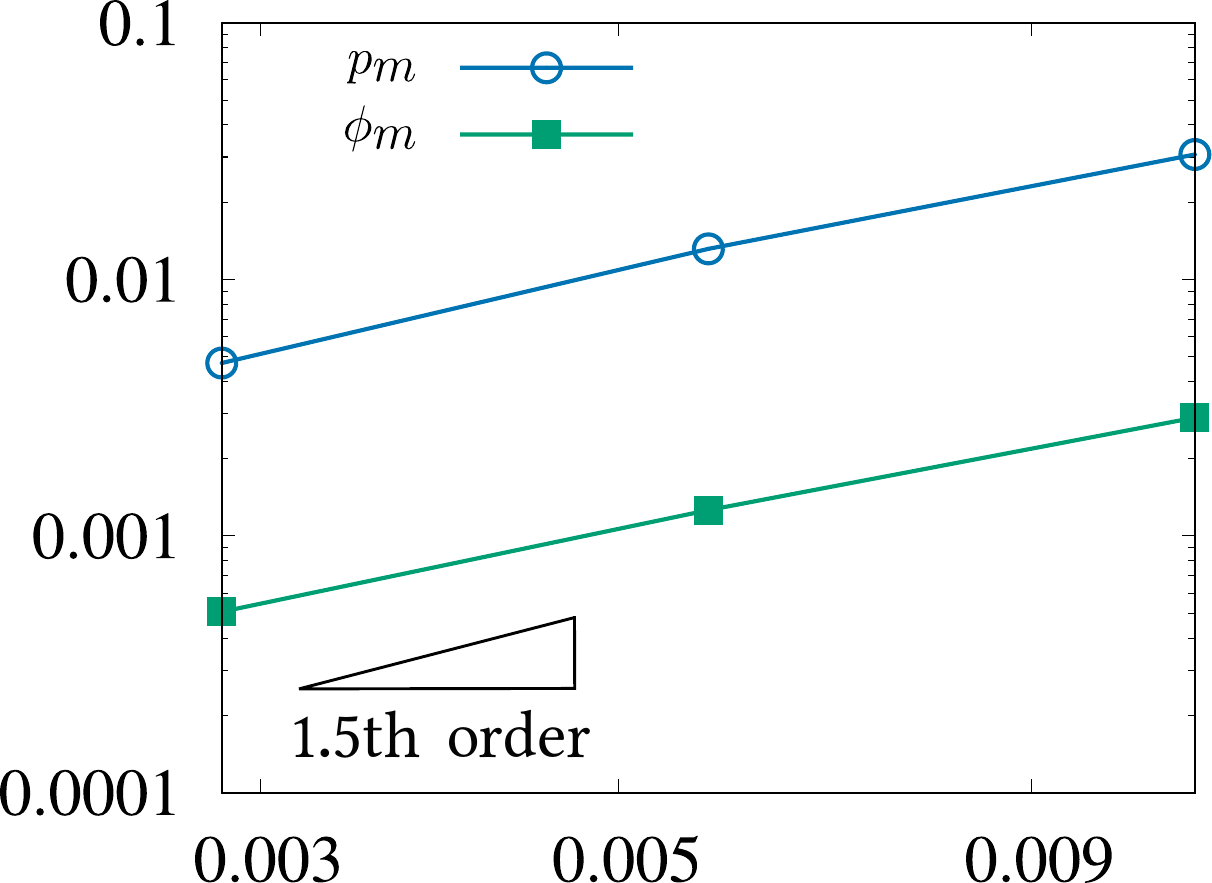}};
			\node[left=of img, node distance=0cm, rotate=90, anchor=center,yshift=-0.6cm] {\footnotesize{  \text{Relative $l^2$ Error}}};
			\node[below=of img, node distance=0cm, rotate=0, anchor=center,yshift=0.6cm,xshift=1.5em] {\footnotesize{ $h$ (\text{m})}};
		\end{tikzpicture}
	\end{minipage}%
	\begin{minipage}{0.45\textwidth}	 
		\begin{tikzpicture}
			\node (img)  {\includegraphics[scale=0.3]{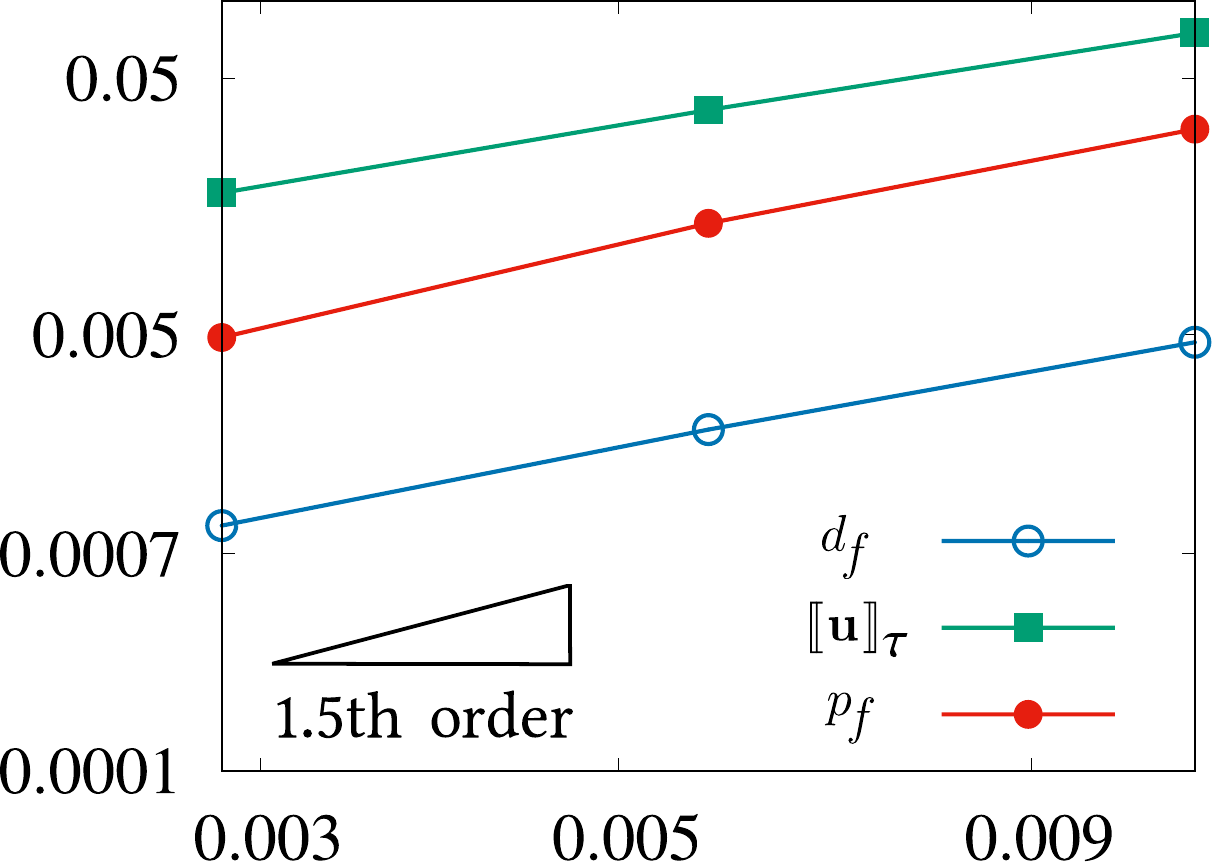}};
			\node[below=of img, node distance=0cm, rotate=0, anchor=center,yshift=0.6cm,xshift=1.5em] {\footnotesize{ $h$ (\text{m})}};
		\end{tikzpicture}
	\end{minipage}%
	\caption{Relative discrete $l^2$ error in time, as a function of the size $h$ of the largest fracture face, between the numerical and reference solutions for the mean matrix pressure and porosity (left), and the mean fracture pressure, aperture, and tangential jump $\jump{\bu }_{\tang}$ (right). Test case of Section \ref{bergen_coup}.}\label{error_m_f_var}	
\end{figure}

\subsubsection{3D DFM with intersecting fractures}\label{cubic_coup}

The objective of this test case is to assess the ability of the discretisation and of the nonlinear solver to simulate a poromechanical test case on a 3D DFM with intersecting fractures.
We consider the domain $\Omega = (0,1 \, \mbox{\rm m})^3$ with the fracture network $\Gamma$ of Figure \ref{cube_1_2}, which we discretise by a tetrahedral mesh consisting of either $47$k or $127$k cells. Both meshes are generated using TetGen \cite{tetgen}, starting with a triangulation of the fracture network and extended it to a conforming tetrahedral mesh of the 3D domain.

The Young's modulus and Poisson's ratio are set to $E=4 \,\mathrm{GPa}$ and $\nu=0.2$, and the friction coefficient to $F=0.5$. The Biot coefficient is set to $b=0.5$ and the Biot modulus to $M=10 \,\mathrm{GPa}$. Dirichlet boundary conditions are imposed at the bottom and top boundaries for the displacement field. We set $\bu=0$ at the bottom boundary $z=0$, and the following time dependent displacement at the top boundary $z=1$:
\begin{equation*}
	\bu(t,x,y,1)= 
	\left\{
	\begin{array}{l@{\quad}l}
		\dsp \prescript{t}{}[0.002\,\text{m}, 0.002\,\text{m}, -0.002\,\text{m}] \frac{2t}{T} &\text{if}~t\le\frac{T}{2},\\[1ex]
		\prescript{t}{}[0.002\,\text{m}, 0.002\,\text{m}, -0.002\,\text{m}] & \text{otherwise}.
	\end{array}  
	\right.
\end{equation*} 
Homogeneous Neumann boundary conditions are set on the lateral sides for the mechanics. 
Regarding the Darcy flow, the matrix permeability tensor is set to $\mathbb{K}_{m}=K_{m}\mathbb{I}$ with $K_{m}=10^{-14} \mathrm{~m}^{2}$, and the dynamic viscosity to $\eta=10^{-3} \mathrm{~Pa} \cdot \mathrm{s}$. 
The initial matrix porosity is $\phi_{m}^{0}=0.2$ and the fracture aperture corresponding to both contact state and zero displacement field is given by $\text{d}_f^c = 10^{-3} \, \mathrm{m}$.
The initial pressure in the matrix and fracture network is $p_{m}^{0}=p_{f}^{0}=10^{5} \mathrm{~Pa}$. Notice that the initial fracture aperture differs from $\text{d}_f^c$, since it is computed by solving the contact-mechanics given the initial pressures $p_{m}^{0}$ and $p_{f}^{0}$. The final time is set to $T=20 \mathrm{~s}$ and the time integration uses a uniform time stepping with $20$ time steps.
The boundary conditions for the flow are impervious except at the lateral boundaries $y=0$ and $y=1$, where a fixed pressure  $p_m = 10^{5} \mathrm{~Pa}$ is prescribed.

We fix an orthonormal coordinate system $(\n,\tang_1,\tang_2)$ on each fracture, and represent in Figures \ref{u_n} and \ref{u_t2} the normal and the $\tang_2$-tangential components of the displacement jump at the final time $t=T$. These pictures illustrate qualitatively the convergence of the displacement jump along the fracture when the mesh is refined.  

Figure \ref{new_curve} plots the cumulated total number of semi-smooth Newton iterations for the contact-mechanical model as a function of time, for both the one-sided and two-sides bubble cases. It shows the robustness of the nonlinear solver with respect to the mesh size and the (moderate)  benefit of the stronger stabilisation obtained with the two-sided bubble discretisation.  
\begin{figure}[H]
	\centering
	\includegraphics[scale=0.15]{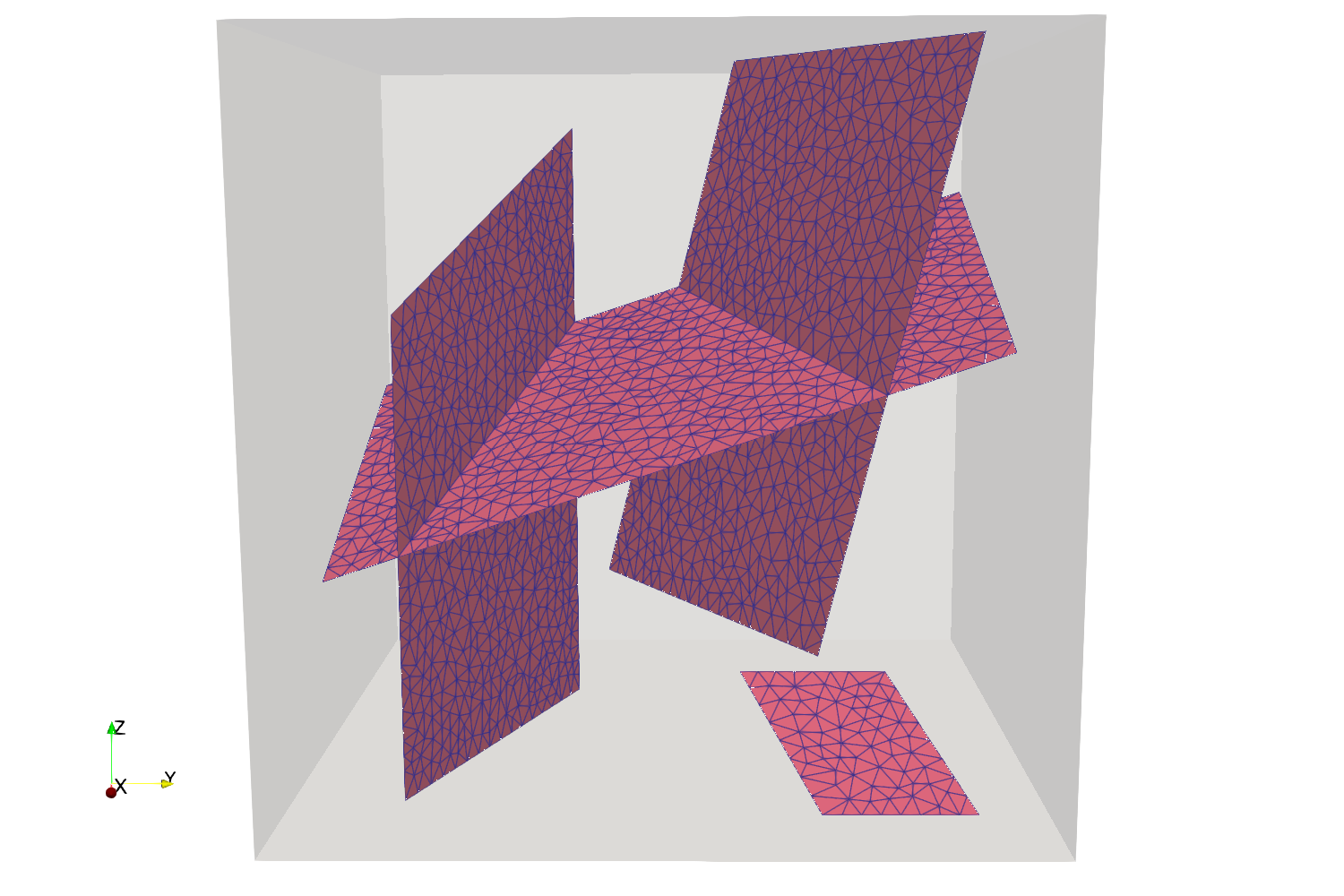}	
	\caption{Tetrahedral mesh of the 3D DFM with intersecting fractures comprising roughly $127$k cells. Test case of Section \ref{cubic_coup}.}
        \label{cube_1_2}	
\end{figure}
\begin{figure}[H]
	\centering
	
	\begin{minipage}{0.45\textwidth}
		\hspace{-2cm}
		\begin{tikzpicture}
			\node (img)  {\includegraphics[scale=0.2]{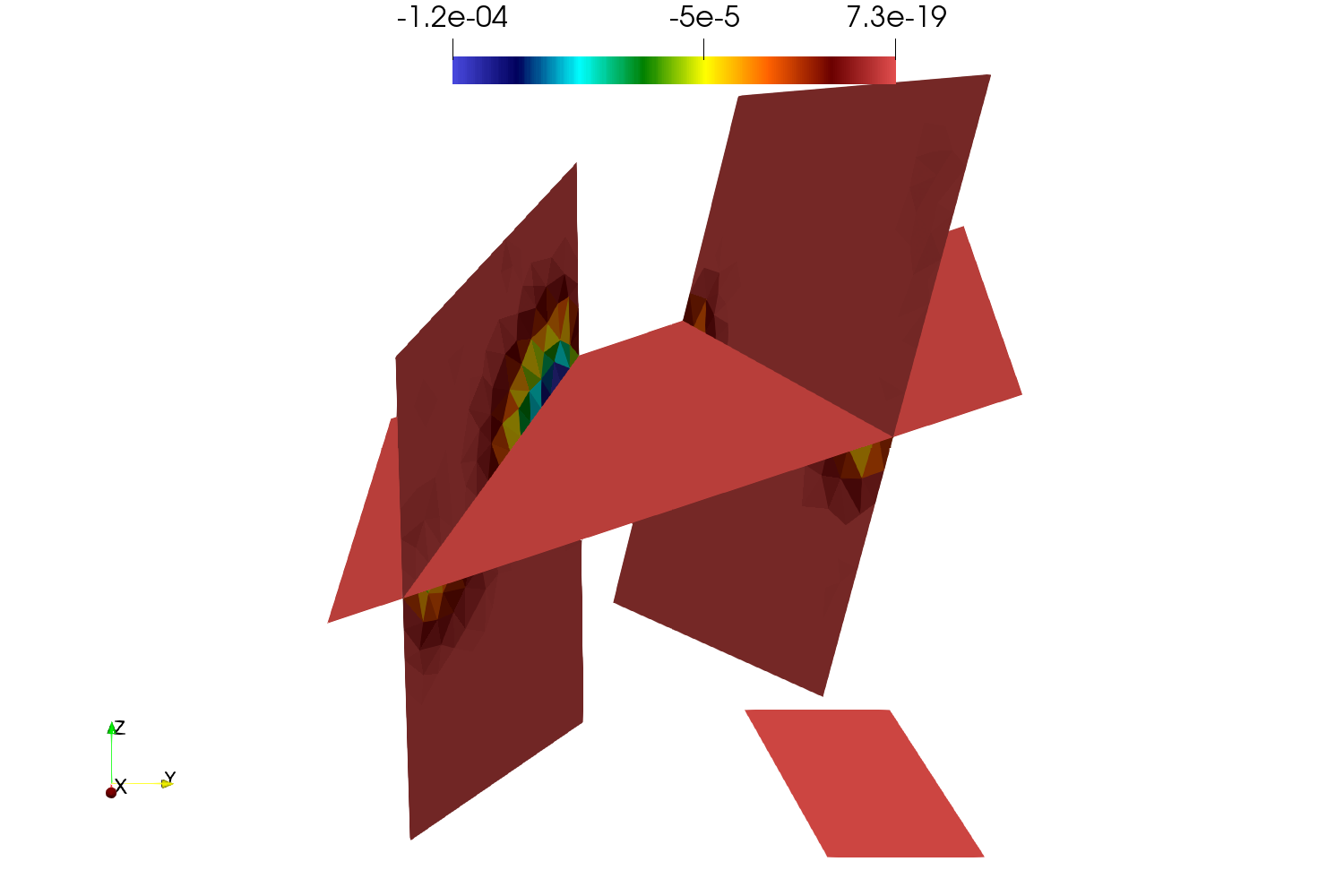}};
		\node[left=of img, node distance=0cm, rotate=90, anchor=center,yshift=-2cm] {\footnotesize{  $\llbracket \bu \rrbracket_{\mathbf{n}}$}};
		\end{tikzpicture}
	\end{minipage}%
	\begin{minipage}{0.45\textwidth}	 
		\hspace{-1.5cm}
		\begin{tikzpicture}
			\node (img)  {\includegraphics[scale=0.2]{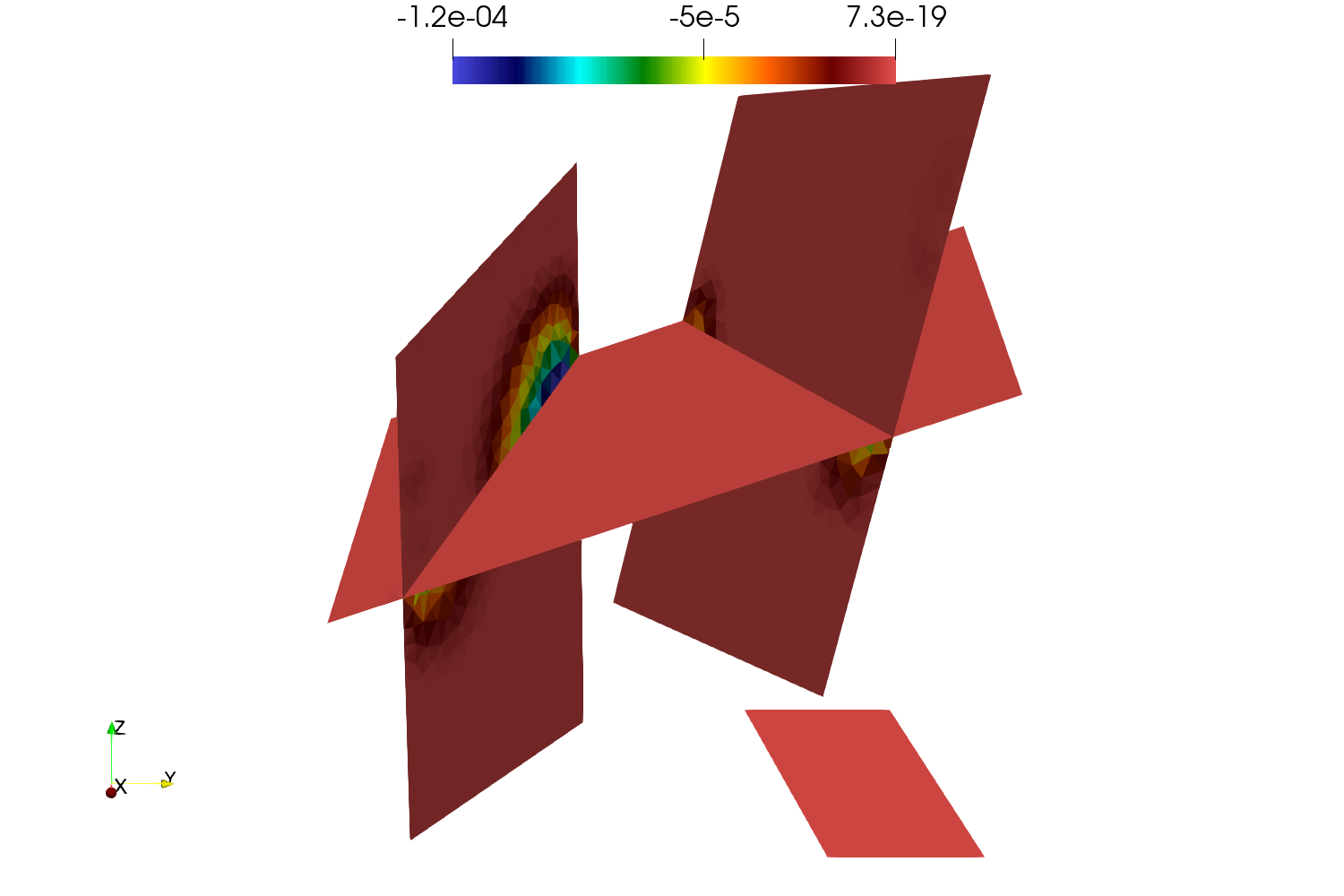}};
		\end{tikzpicture}
	\end{minipage}%
	\caption{Normal jump with the $47$k cells mesh (left) and the $127$k cells mesh (right), obtained at final time $t = T$. Test case of Section \ref{cubic_coup}.}\label{u_n}	
\end{figure}
\begin{figure}[H]
	\centering
	
	\begin{minipage}{0.45\textwidth}
		\hspace{-2cm}
		\begin{tikzpicture}
			\node (img)  {\includegraphics[scale=0.2]{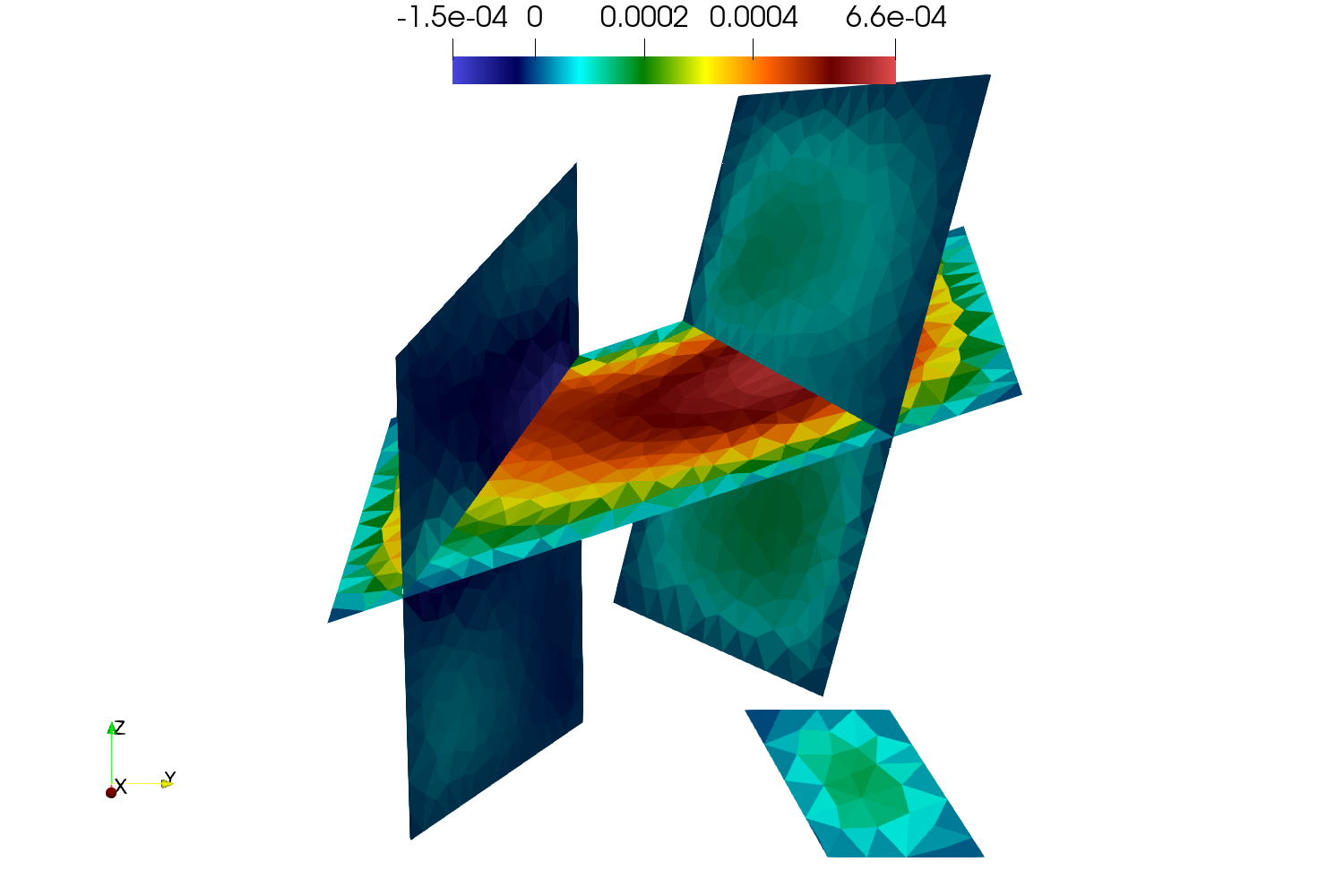}};
			\node[left=of img, node distance=0cm, rotate=90, anchor=center,yshift=-2cm] {\footnotesize{  $\llbracket \bu \rrbracket_{\tau_2}$}};
		\end{tikzpicture}
	\end{minipage}%
	\begin{minipage}{0.45\textwidth}	 
		\hspace{-1.5cm}
		\begin{tikzpicture}
			\node (img)  {\includegraphics[scale=0.2]{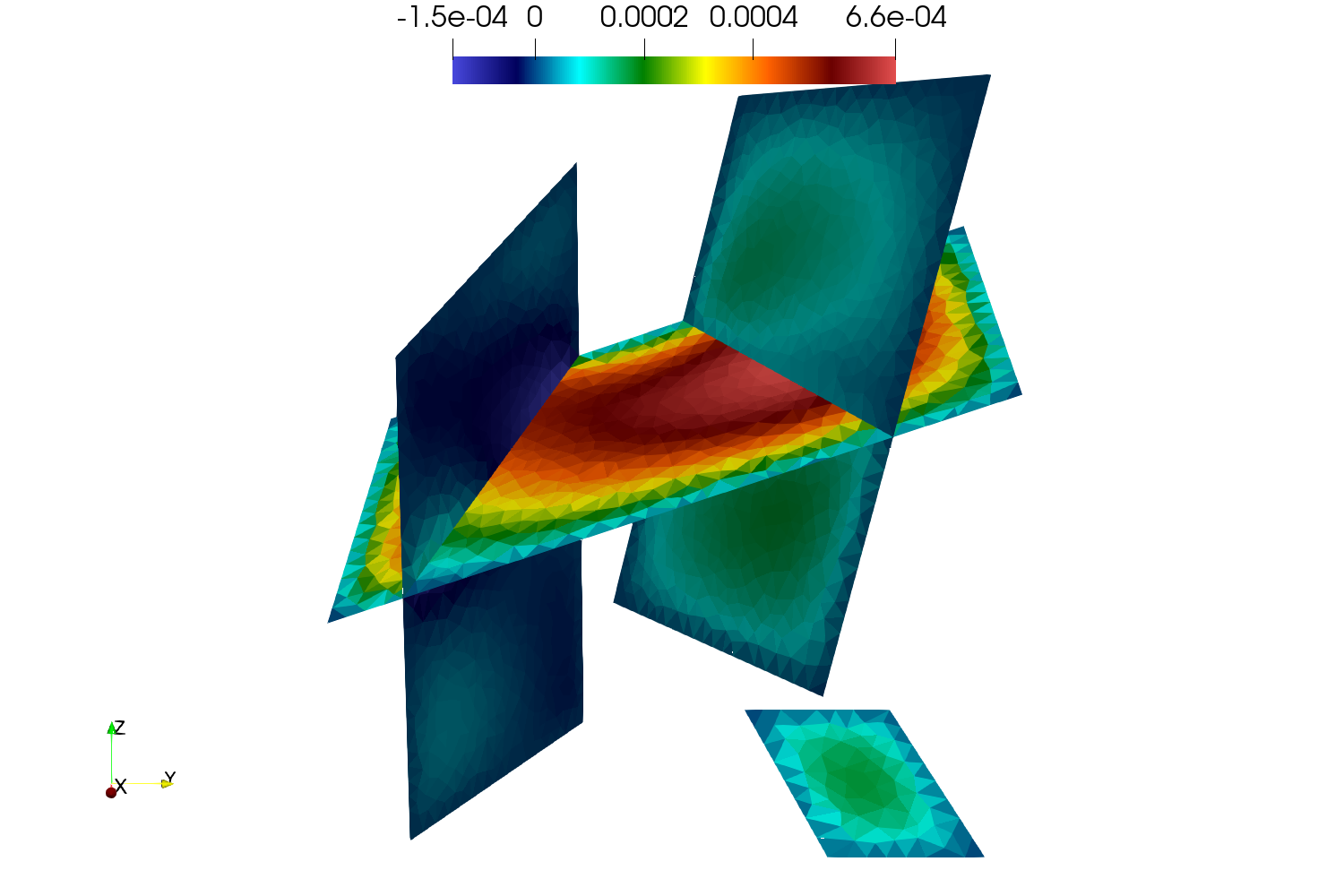}};
		\end{tikzpicture}
	\end{minipage}%
	\caption{The $\tang_2$ component of the tangential jump with the $47$k cells mesh (left) and the $127$k cells mesh (right), obtained at final time $t = T$. Test case of Section \ref{cubic_coup}.}\label{u_t2}	
\end{figure}
\begin{figure}[H]
	\centering
	\begin{minipage}{0.55\textwidth}
		\hspace{-0.5cm}
		\begin{tikzpicture}
			\node (img)  {\includegraphics[scale=0.35]{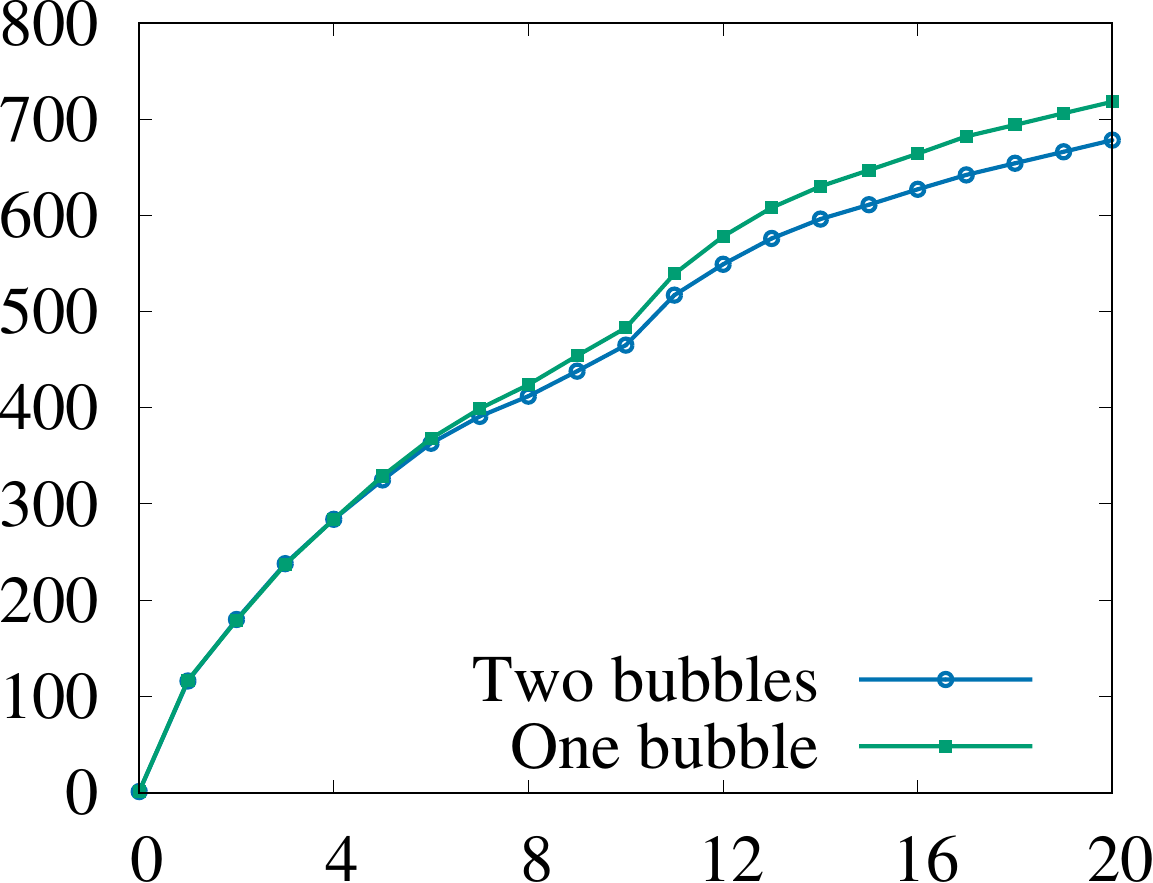}};
			\node[left=of img, node distance=0cm, rotate=90, anchor=center,yshift=-0.6cm] {\footnotesize{  \text{Nb semi-smooth Newton iterations}}};
			\node[below=of img, node distance=0cm, rotate=0, anchor=center,yshift=0.6cm,xshift=1.5em] {\footnotesize{ $t$ (\text{s})}};
		\end{tikzpicture}
	\end{minipage}%
	\begin{minipage}{0.55\textwidth}	
		\hspace{-0.5cm} 
		\begin{tikzpicture}
			\node (img)  {\includegraphics[scale=0.35]{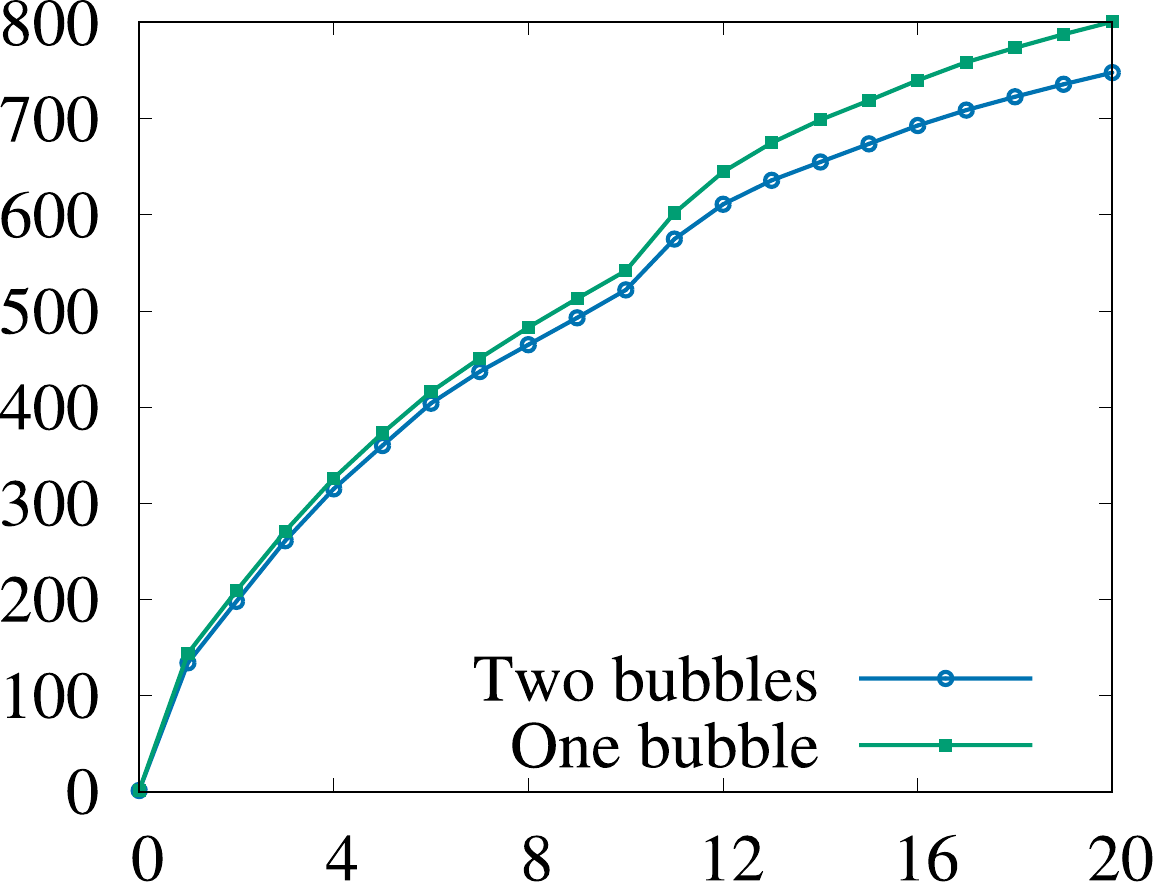}};
			\node[below=of img, node distance=0cm, rotate=0, anchor=center,yshift=0.6cm,xshift=1.5em] {\footnotesize{ $t$ (\text{s})}};
		\end{tikzpicture}
	\end{minipage}%
	\caption{Total number of semi-smooth Newton iterations for the contact-mechanical model as a function of time, with both one-sided and two-sided bubbles and for both meshes with $47$k cells (left) and $127$k cells (right). Test case of Section \ref{cubic_coup}.}\label{new_curve}	
\end{figure}

\section{Conclusions}
\label{sec:conclusions}

We have developed a novel numerical scheme for contact-mechanics in fractured/faulted porous media. It is based on a mixed formulation, using face-wise constant approximations of the Lagrange multipliers and a polytopal scheme for the displacement with fully discrete spaces and reconstruction operators. This scheme is applicable on meshes with generic elements, and employs a bubble degree of freedom to ensure the inf--sup stability with the Lagrange multiplier space. This fully discrete scheme is equivalent to a low-order bubble-VEM scheme, which is to our knowledge the first of its kind. 
Numerical validations were carried out on several 2D and 3D test cases, both for the stand alone contact-mechanical and the fully coupled mixed-dimensional poromechanical models.
Our future plan is to investigate the robustness of these polytopal discretisations to simulate fault reactivation in CO$_2$ geological storage, by using polyhedral meshes based on Corner Point Geometries.
The stability and convergence analysis of this mixed $\Po^1$ VEM-bubble--$\Po^0$ discretisation of the contact-mechanics requires new developments related to the additional bubble unknowns and to fracture networks including tips and intersections. This is a work in progress that will prove the discrete Korn inequality and the inf--sup condition.

\appendix

\section{Analysis of the elliptic projectors and stabilisation in the VEM space}
\label{appendice_VEM}

\subsection{Proof of Lemma \ref{lem:reconstruction.projection}}
\label{App_proj}

For $K \in \cells$ and $\sigma \in \faces_K$, let us show that  the local projectors $\pi^{K}$ and $\pi^{\Ksig}$  defined in \eqref{piK}  and \eqref{pisigma}, respectively, satisfy conditions \eqref{proj_p1_K} and \eqref{proj_p1_sigma} in $\bV_{h}^K$. 

For  $\bv \in \bV_{h}^K$, one can write:
\begin{align*}
		\int_K \nabla \bv  
		={}&  \sum_{\sigma \in \faces_{K}}\int_{\sigma} \gamma^{\Ksig} \bv \otimes \n_{\Ksig} \\ 
		={}& \sum_{\sigma \in  \faces_{\Gamma,K}^+}\int_{\sigma} \gamma^{\Ksig} \bv  \otimes \n_{\Ksig} +  \frac{1}{|K|} \sum_{\sigma \in \faces_{K} \setminus  \faces_{\Gamma,K}^+ }\int_{\sigma} \gamma^{\Ksig}\bv  \otimes \n_{\Ksig} \\
		={}& \sum_{\sigma \in \faces_{\Gamma,K}^+}  \left[\int_{\sigma} \( \gamma^{\Ksig} \bv  -\pi^{\Ksig} ( \gamma^{\Ksig}\bv ) \) \otimes \n_{\Ksig}  \right]+ \sum_{\sigma \in \faces_{K}} \int_{\sigma} \pi^{\Ksig} ( \gamma^{\Ksig}\bv ) \otimes \n_{\Ksig},
\end{align*}
where we have used the Stokes formula in the first equality and, in the conclusion, subtracted and added 
$$
\sum_{\sigma \in \faces_{\Gamma,K}^+}  \int_{\sigma} \pi^{\Ksig} ( \gamma^{\Ksig}\bv ) \otimes \n_{\Ksig}
$$
and used the relation $\int_{\sigma}\gamma^{\Ksig} \bv  = \int_{\sigma} \pi^{\Ksig} ( \gamma^{\Ksig}\bv )$ for all $\sigma \in \faces_{K} \setminus  \faces_{\Gamma,K}^+$ (see the integral condition in \eqref{eq:def.VKsig}). We therefore have
\begin{equation}\label{eq:pi.Pi.1}
		\int_K \nabla \bv  
       =  \sum_{\sigma \in \faces_{\Gamma,K}^+}  |\sigma| (\ID\bv)_{\Ksig} \otimes \n_{\Ksig}  +  \sum_{\sigma \in \faces_{K}} \int_{\sigma} \pi^{\Ksig} ( \gamma^{\Ksig}\bv ) \otimes \n_{\Ksig}.
\end{equation}
The relation \eqref{pisigma} between $\pi^\Ksig$ and $\Pi^\Ksig$ and the definition \eqref{PiKsigma} of $\Pi^\Ksig$ (recalling that $\int_\sigma(\x-\ov{\x}_\sigma)=0$) yield
$$
\int_{\sigma} \pi^{\Ksig} ( \gamma^{\Ksig}\bv )
=\int_{\sigma} \Pi^{\Ksig} ( \ID\gamma^{\Ksig}\bv )
=|\sigma|\overline{(\ID\gamma^{\Ksig}\bv)}_\Ksig=|\sigma|\overline{(\ID\bv)}_\Ksig
$$
(where $\overline{(\ID\bv)}_\Ksig$ is defined by \eqref{v_x_bar_sigma} with $\ID\bv$ instead of $\bv$).
Hence, \eqref{eq:pi.Pi.1} and the definition \eqref{eq:def.nablaK} of $\nabla^K=\nabla\Pi^K$ show that
$$
  \int_K \nabla \bv = |K|  \grad^{K}( \ID \bv)=\int_K  \nabla\Pi^K(\ID \bv)=\int_K\nabla\pi^K\bv.
$$
Taking $\mathbf{q}\in\Po^1(K)^d$ and multiplying this relation with $\nabla\mathbf{q}$ (which is constant) yields \eqref{proj_1_1}.

To get the relation \eqref{proj_1_2}, on the other hand, we use , the fact that $\pi^K\bv=\Pi^K\ID\bv$ is linear and \eqref{eq:def.piK} to write
$$
(\pi^K\bv)(\ov{\x}_K)=\frac{1}{|K|}\int_K \Pi^K\ID\bv =\ov{(\ID\bv)}_K=\sum_{s\in\nodes_K}\omega_s^K(\ID\bv)_{\Ks}=\sum_{s\in\nodes_K}\omega_s^K\bv(\x_s).
$$

We now turn to \eqref{proj_p1_sigma}. If $\bv \in \bV_{h}^\Ksig$, then $\bv\in\Po^1(e)$ on each edge $e$ of $\sigma$ and thus, by Stokes formula,
$$
	\int_{\sigma} \nabla_{\tang} \bv  =  \sum_{e \in \edges_{\sigma}} \int_{e} \gamma^{\sigma_e} \bv  \otimes \n_{\sige} 
	= \sum_{e= s_1 s_2 \in \edges_{\sigma}} |e| {(\ID \bv)_{K_{s_1}} + (\ID \bv)_{K_{s_2}}\over 2} \otimes \n_{\sige}
	= |\sigma|\grad^{\Ksig} ( \ID \bv ).
$$
The conclusion of \eqref{proj_p1_sigma} then follows as above.

\subsection{Discrete stability term $S_{\mu,\lambda,\D}$ as a VEM $\textit{dofi}$-$\textit{dofi}$ stabilisation} \label{App_stab}

Given $\bu, \bv \in \bV^K_h$, let us set $\bu_\D = \ID \bu$,  $\bv_\D = \ID \bv$.
The  usual $\textit{dofi}$-$\textit{dofi}$ approach first introduces the bilinear form based on the VEM degrees of freedom 
$$
s_K(\bu,\bv) =  h^{d-2}_K  \(\sum_{s\in \nodes_K} \bu_{\Ks} \bv_{\Ks} + \sum_{\sigma \in \faces_{\Gamma,K}^+  } \bu_\Ksig \bv_\Ksig \),
$$
and defines the stabilisation bilinear form as 
$$
\mathcal{S}_K(\bu,\bv) = s_K(\bu - \pi^K \bu, \bv - \pi^K \bv). 
$$
The fact that $\mathcal{S}_K(\bu,\bv) = S_K(\ID \bu, \ID \bv)$ directly follows from the definition of $S_K$ in \eqref{eq:StabK} and from the identities $(\bw - \pi^K \bw)_\Ks = (\ID\bw)_\Ks - (\Pi^K \ID\bw)(\x_s)$ and
$(\bw - \pi^K \bw)_\Ksig = (\ID\bw)_\Ksig - (\ID\pi^K \bw)_\Ksig = (\ID\bw)_\Ksig$ for all $\bw \in \bV^K_h$ (we have used $(\ID \mathbf{q})_{\Ksig}=0$ whenever $\mathbf{q}\in\Po^1(K)$, which follows from the fact that $\Pi^\Ksig(\ID \mathbf{q})=\mathbf{q}$ since $\mathbf{q}$ is linear).

%

\section*{Declarations}

\subsection*{Data availability}
Data will be made available on request.

\subsection*{Conflicts of interest}
The authors declare that they have no known competing financial interests or personal relationships that could have appeared to influence the work reported in this paper.

\section*{Acknowledgement}
We acknowledge the partial support from the joint laboratory IFPEN-Inria Convergence HPC/AI/HPDA for the energetic transition in realizing this project.

This work was also partially funded by the European Union (ERC Synergy, NEMESIS, project number 101115663). Views and opinions expressed are however those of the authors only and do not necessarily reflect those of the European Union or the European Research Council Executive Agency. Neither the European Union nor the granting authority can be held responsible for them.

\bibliographystyle{plain}
\bibliography{Poromeca_contact}

\end{document}